\documentclass[12pt,oneside,reqno]{amsart}
\usepackage{makecell,float}
\usepackage{mathrsfs}
\usepackage{txfonts}
\allowdisplaybreaks
\sloppypar

\usepackage{color}

\usepackage[colorlinks=true,backref=page]{hyperref}
\hypersetup{
    linkcolor=blue,          % color of internal links
    citecolor=red,        % color of links to bibliography
    filecolor=blue,      % color of file links
    urlcolor=magenta
}

\usepackage{geometry}
\numberwithin{equation}{section}
\usepackage{thmtools}
\newtheorem{theorem}{Theorem}[section]
\newtheorem{lemma}[theorem]{Lemma}
\newtheorem{proposition}[theorem]{Proposition}
\newtheorem{corollary}[theorem]{Corollary}

\newtheorem{definition}[theorem]{Definition}

\newtheorem{remark}[theorem]{Remark}
\newtheorem{example}[theorem]{Example}

\usepackage[abbrev]{amsrefs}
\renewcommand{\eprint}[1]{{\it Available at}\href{https://arxiv.org/abs/#1}{\it{ arXiv:#1}}.}
\renewcommand{\PrintDOI}[1]{\url{https://doi.org/#1}}%{#1}}
\renewcommand{\MR}[1]{\href{https://mathscinet.ams.org/mathscinet-getitem?mr=#1}{\color{cyan}{MR#1}}}

\BibSpec{article}{%
+{}{\PrintAuthors} {author}
+{,}{ \textrm} {title}
+{:}{ \textrm} {subtitle}
+{.}{ \textit} {journal}
+{,}{ \textbf} {volume}
+{}{ \parenthesize} {date}
+{,}{ } {pages}
+{}{ \parenthesize} {language}
+{.}{} {transition}
+{.}{ \PrintReviews} {review}
+{.}{ \eprint} {eprint}
+{.}{ \PrintDOI} {doi}
+{.}{ \url} {url}
}
\BibSpec{book}{%
+{}{\PrintAuthors} {author}
+{,}{ \textit} {title}
+{:}{ \textit} {subtitle}
+{.}{ \textrm} {series} %+{,}{ \textrm} {series}
+{,}{ Vol.} {volume}
+{.}{ } {publisher}
+{,}{ } {date}
+{}{ \parenthesize} {language}
+{.}{} {transition}
+{.}{ \PrintReviews} {review}
+{.}{ \PrintDOI} {doi}
}

\newcommand\bt{\begin{theorem}}
\newcommand\et{\end{theorem}}
\newcommand\bl{\begin{lemma}}
\newcommand\el{\end{lemma}}
\newcommand\bd{\begin{definition}}
\newcommand\ed{\end{definition}}
\newcommand\bp{\begin{proposition}}
\newcommand\ep{\end{proposition}}
\newcommand\bc{\begin{corollary}}
\newcommand\ec{\end{corollary}}
\newcommand\br{\begin{remark}}
\newcommand\er{\end{remark}}
\newcommand\bexa{\begin{example}}
\newcommand\eexa{\end{example}}
\newcommand\1{{\boldsymbol{1}}}
\newcommand\hp{{\boldsymbol{p}}}
\newcommand\cC{\mathcal{C}}
\newcommand{\R}{{\mathbb R}}
\newcommand{\EX}{{\mathbb{E}}}
\newcommand\mE{{\mathbb E}}
\newcommand\mI{{\mathbb I}}
\newcommand\mN{{\mathbb N}}

\newcommand\mR{{\mathbb R}}
\newcommand\sD{\mathscr{D}}
\newcommand\sL{{\mathscr L}}

\newcommand\D{\mathord{\mathrm{D}}}
\newcommand\e{\mathrm{e}}

\newcommand\dif{{\mathord{{\mathrm d}}}}
\newcommand\dis{{\boldsymbol{d}}}
\newcommand\p{\partial}

\renewcommand\[{{\Big[}}
\renewcommand\]{{\Big]}}
\newcommand\<{{\langle}}
\renewcommand\>{{\rangle}}
\renewcommand\({{\Big(}}
\renewcommand\){{\Big)}}

\newcommand\var{{\mathrm{var}}}

\begin{document}

\title{The Non-Gaussian to Gaussian Transition:  Pointwise Heat Kernel Estimates and Optimal Convergence Rates}

\author{Xianming Liu, Chongyang Ren, and Mingyan Wu}
%\date{\today}

\address{Xianming Liu: School of Mathematics and Statistics, Huazhong University of Science and Technology, Wuhan, Hubei 430074, P. R. China, Email: xmliu@hust.edu.cn}

\address{Chongyan Ren: School of Mathematical Sciences, University of Science and Technology of China, Hefei, Anhui 230026, P. R. China, Email: rcy.math@ustc.edu.cn}

\address{Mingyan Wu: School of Mathematical Sciences, Xiamen University, Xiamen, Fujian 361005, P. R. China, Email:  mingyanwu.math@gmail.com, mingyanwu.math@xmu.edu.cn }

%\thanks{* Corresponding author}

\subjclass[2020]{Primary: 60G52 Stable stochastic processes, 35K08 Heat kernel; Secondary: 60B10 Convergence of probability measures, 60H10 Stochastic ordinary differential equations (aspects of stochastic analysis)}

\keywords{$\alpha$-stable processes; heat kernel estimates; invariant measures; weighted total variation; optimal convergence rates}

\maketitle

\begin{abstract}
We establish uniform pointwise estimates for the densities of a family of $\alpha$-stable processes with respect to the index $\alpha \in [\alpha_0,2]$ for some $\alpha_0>0$. In addition, we estimate the difference between the heat kernels of non-local and local operators, showing that it is controlled by the rate $2-\alpha$. Both estimates (see  Proposition \ref{prop:1.6}) are new to the literature. Furthermore, as an application, we achieve the optimal rate $2-\alpha$ for the pointwise estimate between the transition probabilities, as well as for the (weighted) total variation and Kantorovich distances between the invariant measures, of non-Gaussian and Gaussian diffusion. These results are obtained under the assumption that the drifts are locally $\beta$-H\"older continuous, with the latter additionally requiring dissipativity. The results on transition probabilities (see Theorem \ref{th-1.1}) are novel, while those on invariant measures (see Theorem \ref{th-6.1}) significantly extend the existing literature.
\end{abstract}

\setcounter{tocdepth}{1}
\tableofcontents

%\listoftodos{}

\section{Introduction}

\subsection{Motivation}

A L\'evy process $(L_t)_{t \geq 0}$ in $\mathbb{R}^d$ is called $\alpha$-stable if it is self-similar with stability index $\alpha \in (0,2]$; that is, for any $c>0$, the scaled process $(c^{-1/\alpha} L_{ct})_{t \geq 0}$ has the same distribution as $(L_t)_{t \geq 0}$. Here $\alpha$ is called the stability index (which also serves as the tail index: the smaller $\alpha$, the heavier the tail). When $\alpha=2$, the process is (up to a scaling factor) a standard Brownian motion, which is the only Gaussian L\'evy process. When $\alpha \in (0,2)$, the process is recognized as a non-Gaussian process.

\medskip

It is well known that when $\alpha$ is close to $2$, an $\alpha$-stable process behaves similarly to a Brownian motion, which is clearly observable in computer simulations of sample paths (see, e.g.,~\cite{JW94}). Roughly speaking, as $\alpha$ approaches $2$, the motion of an $\alpha$-stable process is dominated by small jumps (see also \cite{DZ16,SZ20}).  The limit $\alpha \to 2$ thus connects two different types of stochastic models: from non-Gaussian to Gaussian behavior.

\medskip

In recent years, SDEs driven by $\alpha$-stable processes have been studied across many fields, where the stability index $\alpha$ governs the shift from heavy-tailed, jump-type models to Gaussian continuous models. In physics, this shift characterizes the evolution from anomalous diffusion to the classical equilibrium of Brownian motion. 
The jumped processes
also appear in models of collisions in certain physical systems, such as Boltzmann equation (see e.g., \cite{Ta78}), and the Surface Quasi-Geostrophic system (see e.g., \cite{HRZ24}).  In engineering, non-Gaussian $\alpha$-stable distributions are used to characterize signals that exhibit sharp spikes or occasional bursts, where the transition $\alpha \to 2$ corresponds to a return to Gaussian signals (see e.g., \cite{SCQ}). In economics, heavy-tailed distributions—such as Pareto's law for income and wealth (see e.g., \cite{Ar15})—are important in extreme value theory and risk management (see e.g., \cite{EKM}); $\alpha$-stable processes provide a flexible family for describing such tail behavior.  In financial mathematics, the heavy tails and jumps of asset returns are well captured by $\alpha$-stable distributions. A stability index $\alpha < 2$ reflects their heavy-tailed nature and ``black swan'' events (see e.g., \cite{No}), while $\alpha = 2$ reduces to the Gaussian case, corresponding to the classical Black-Scholes framework (see e.g., \cite{Ok03}).

\medskip

Although $\alpha$-stable processes appear in many areas, a quantitative understanding of the limit $\alpha \to 2$ has received little attention in the literature. In this paper, we study the ``distance'' (in some suitable sense) between the following two stochastic differential equations (SDEs):
\begin{equation}\label{e1}
\dif X^{(\alpha)}_t = b(X^{(\alpha)}_t) \dif t + \dif L^{(\alpha)}_t,\ \ \alpha \in (0,2),
\end{equation}
and
\begin{equation}\label{e2}
\dif X^{(2)}_t = b(X^{(2)}_t) \dif t + \dif L^{(2)}_t,
\end{equation}
where $b: \mathbb{R}^d \rightarrow \mathbb{R}^d$ is a Borel measurable drift coefficient, and $(L_t^{(\alpha)})_{t \geq 0}$ is the rotationally invariant $\alpha$-stable L\'evy process with $\alpha \in (0,2]$. In particular, for SDEs with drift coefficients that are of low regularity—a condition that occurs naturally in many applications—the convergence rates as $\alpha \to 2$ are not covered by existing results. In this work, we study this problem using heat kernel estimates. We obtain convergence rates for the densities of solutions and invariant measures to the corresponding SDEs. To this end, we establish uniform heat kernel estimates with respect to the stability index $\alpha$. The main results are presented in Theorems \ref{th-1.1} and \ref{th-6.1}.

\subsection{Heat kernel estimates: from non-local to local}
 
As a matter of fact, for  rotationally invariant $\alpha$-stable L\'evy processes $L^{(\alpha)}$, the limit $\alpha \to 2$ is evident when one looks at the probability densities. Notice that the characteristic function (see e.g., \cite{Sa99}) is given by
\begin{align}\label{eq:ZG02}
\mathbb{E} \mathrm{e}^{i \xi \cdot L_t^{(\alpha)}} = \mathrm{e}^{-t |\xi|^\alpha},
\end{align}
which, by Proposition 28.1 of \cite{Sa99}, derives that  $L_t^{(\alpha)}$ admits a smooth density function $p^{(\alpha)}(t,\cdot)$ given by the inverse Fourier transform
\begin{align}\label{eq:hp01}
p^{(\alpha)}(t,x) = (2\pi)^{-d} \int_{\mathbb{R}^d} \mathrm{e}^{-i x \cdot \xi} \, \mathbb{E} \mathrm{e}^{i\xi \cdot L_t^{(\alpha)}} \, d\xi, \quad \forall t>0,
\end{align}
and all partial derivatives of $p^{(\alpha)}(t,\cdot)$ decay to $0$ as $|x| \to \infty$. When $\alpha = 2$, the process reduces to Brownian motion, whose density $p^{(2)}(t,\cdot)$ corresponds to the Laplacian (a local generator). When $\alpha < 2$, the density $p^{(\alpha)}(t,\cdot)$ is associated with the fractional Laplacian (a non-local generator). By the dominated convergence theorem, we obtain the convergence
\begin{align}\label{eq:UJ00}
p^{(\alpha)}(t,x) \to p^{(2)}(t,x), \quad \text{as } \alpha \to 2.
\end{align}
This elementary fact naturally raises two questions:
\begin{itemize}
\item[(i)] What is the convergence rate for \eqref{eq:UJ00}?

\item[(ii)] Under what conditions does such convergence extend to the densities (or distributions) of SDEs driven by $\alpha$-stable processes?
\end{itemize}

Regarding the first question, to the best of our knowledge, no result is currently available in the literature. In present work, we address this gap by establishing a pointwise estimate for the difference between the heat kernels, which yields a convergence rate of $2-\alpha$ (see Proposition \ref{prop:1.6}(ii)). A key ingredient in our proof is a novel uniform estimate for the heat kernel with respect to the index $\alpha$ (see Lemma \ref{lem:ZT00}), which is of independent interest. Precisely, for $\alpha \in [\alpha_0,2]$ with some $\alpha_0 \in (0,2)$, there exists a constant $c > 0$ depending only on $d,\alpha_0$ such that for any $t > 0$,
$$
p^{(\alpha)}(t,x) \lesssim_c t^{-d/\alpha} \wedge \frac{t}{|x|^{d+\alpha}} =: \varrho_\alpha(t,x).
$$
Compared with classical results (e.g., Lemma 2.2 in \cite{CZ16}), the difference is that our constant $c$ does not depend on the index $\alpha$. See Proposition \ref{prop:1.6}(i) for a similar result for the derivatives.

\medskip

For the second question, several results concerning the convergence of distributions have already been established. For example, the first named author, Liu, studied in \cite{Liu22a} the convergence behavior, as $\alpha \to 2$, of solutions to SDEs driven by $\alpha$-stable processes with drift belonging to the usual H\"older space. As for quantitative results, we first refer to \cite{LFL23}, in which the authors gave a  look at the optimal weak convergence rate, showing it to be $2-\alpha$ for drifts that are dissipative, locally bounded, and of H\"older regularity $2+\beta$, by constructing an evolution equation describing the weak difference. Subsequently, Deng et al. \cite{DSX23} established the optimal convergence rate $2-\alpha$ for the Wasserstein-1 distance between $\operatorname{Law}(X_t^{(\alpha)}(x))$ and $\operatorname{Law}(X_t^{(2)}(y))$. Their proof uses Malliavin calculus and assumes the drift to be dissipative and third-order differentiable. This result was later extended to the case of multiplicative noises in the master's thesis \cite{Ly24}, supervised by the first named author, Liu. As a continuation of the work in \cite{DSX23}, Deng et al. \cite{DLSX26} derived an estimate for the total variation distance and further extended it to the Wasserstein-$p$ distance for $0<p<1$.

\medskip

However, a quantitative result on the pointwise convergence of the densities has remained open, even under smooth coefficient assumptions. In this work, we fill this gap by investigating the convergence of heat kernels from non-Gaussian diffusion (see SDE \eqref{e1}) to Gaussian diffusion (see SDE \eqref{e2}), where the drifts are of low regularity and possibly unbounded, satisfying condition ($\mathbf{H}_b^\beta$). Our first main contribution, Theorem \ref{th-1.1}(ii), is to establish that, even in this more challenging setting, the convergence rate remains $2-\alpha$. The proof of this result relies crucially on the uniform heat kernel estimates with respect to $\alpha$ established in Theorem \ref{th-1.1}(i). This uniform estimate is, to the best of our knowledge, new in the literature and is of independent interest.

\subsection{$\alpha$-dependence of invariant measures}
Having established the continuous dependence of the heat kernels for solutions to SDEs on the index $\alpha$ (see Theorem \ref{th-1.1}), we then apply this result to derive, under an additional dissipation condition, the (weighted) total variation and Kantorovich distances for the associated invariant measures (see Theorem \ref{th-6.1}).

\medskip

There are a few results concerning the convergence of invariant measures for stochastic systems in the transition from non-Gaussian to Gaussian noise. In this direction, we refer the reader to a series of works \cite{Liu22b, Liu22c, LL25}, in which the authors investigated the convergence of ergodic measures for three types of stochastic equations driven by cylindrical $\alpha$-stable processes as $\alpha \to 2$: the stochastic real Ginzburg-Landau equation (cf. \cite{Liu22b}), the stochastic Burgers equation (cf. \cite{Liu22c}), and the stochastic Navier-Stokes equation (cf. \cite{LL25}). These works provide a qualitative understanding of the convergence.

\medskip

For quantitative results, we recall~\cite{DSX23, Ly24, DLSX26}, where the convergence rate $2-\alpha$ for invariant measures was obtained by establishing distance estimates between the laws of solutions to SDEs. Due to the use of Malliavin calculus, the conditions in~\cite{DSX23, Ly24, DLSX26} impose high regularity requirements on the drift coefficient $b$; more precisely, $b$ is required to be third-order differentiable. In comparison, by employing heat kernel estimates, we prove the optimal quantitative convergence rate in the weighted total variation distance under our much weaker condition ($\mathbf{H}_b^\beta$) on the drift $b$, which is both low in regularity and allowed to be unbounded.

\begin{table}[H]
\caption{Results on the $\alpha$-dependence of ergodic measures for SDEs}
\vspace{1em}
\centering
\begin{tabular}{c|c|c|c}
\hline
\bf References & \bf Convergence & \bf Method &\bf  Conditions \\
\hline
\cite{DSX23} \&~\cite{Ly24} & Wasserstein-1 ($W_1$) & Malliavin calculus & \makecell{$b\in C^3$ \\ \it  (high regularity)} \\
\hline
\cite{DLSX26} & \makecell{Total variation (TV),\\$W_p$ ($0<p<1$)} & \makecell{Malliavin calculus} & \makecell{$b\in C^3$ \\ \it   (high regularity)} \\
\hline
\it Present & \makecell{ TV, $W_p$ ($0<p\leq 1$), \\ Weighted TV} & \makecell{Heat kernel estimates,\\ Harris's theorem} & \makecell{$(\mathbf{H}_b^\beta)$ \\ \it  (much weaker)} \\
\hline
\end{tabular}
\end{table}

\medskip

Furthermore, noting that the usual Wasserstein-$p$ distance and the total variation distance are both dominated by the weighted total variation distance with weight function $V_p(x)=1+|x|^p$, one sees that in addition to the methodological innovations, a further contribution of this work lies in the generalization of existing conditions and the establishment of the optimal rate for the weighted total variation distance. The use of heat kernel estimates leads to a significant relaxation of the regularity assumptions on the drift coefficient $b$. While semigroup estimates have been commonly used in studying the $\alpha$-dependence, the potential of pointwise heat kernel estimates in this context seems to have been explored to a lesser extent.

%%%%%%%%%%%%%%%%
%Conventions and notations
%%%%%%%%%%%%%%%%

\subsection{Conventions and notations}
Throughout this paper, we use the following conventions and notations: As usual, we use $:=$ as a way of definition. Define $\mN_0:= \mN \cup \{0\}$ and $\mR_+:=[0,\infty)$. The letter $c$ without subscripts will denote an unimportant positive constant, whose value may change in
different occasions. We use $A \asymp B$ and $A\lesssim B$ to denote $c^{-1} B \leq A \leq c B$ and $A \leq cB$, respectively, for some unimportant constant $c \geq 1$. We also use $A   \lesssim_c  B$ to denote $A \leq c B$ when we want to emphasize the constant.
\begin{itemize}
\item  Denote the identity $d\times d$-matrix by $\mI_{d \times d}$. $\sigma^*$ is the transformation of the matrix $\sigma$.
\item For $j \in \mathbb{N}$, $\nabla^j$ denotes the $j$-order gradient. $\nabla^0$ denotes the identity operator.
\item The supremum norm $\|\cdot\|_\infty$ is defined as $\|f\|_\infty:=\sup_{x\in\mR^d}|f(x)|.$
\item Denote by Beta functions and Gamma functions, respectively,
\begin{align*}
{\rm B}(s_1,s_2):= \int_0^1 x^{s_1-1}(1-x)^{s_2-1} \dif x, \ \ \forall s_1,s_2>0
\end{align*}
and
\begin{align}\label{eq:Gamma}
\Gamma(s) := \int_0^\infty x^{s-1} \e^{-x} \dif x,\ \ \forall s>0.
\end{align}
Furthermore, notice that
\begin{align}\label{eq:Beta}
\int_0^t (t-s)^{\gamma_1-1} s^{\gamma_2-1} \dif s = t^{\gamma_1+\gamma_2-1} {\rm B}(\gamma_1,\gamma_2)
\end{align}
and
\begin{align}\label{eq:ZG10}
{\rm B}(s_1,s_2)=\frac{\Gamma(s_1) \Gamma(s_2) }{\Gamma(s_1+s_2) }.
\end{align}
\item Let $\omega_{d-1}:=\frac{2\pi^{d/2}}{\Gamma(d/2)}$ be the surface measure of the unit sphere of $\mathbb{R}^d$.
\item For $\alpha \in (0,2)$, define
\begin{align}\label{eq:ZG03} 
\cC(d,\alpha):= \frac{\alpha
\Gamma(\frac{d+\alpha}{2})
}{2^{1-\alpha}\pi^{\frac{d}{2}}\Gamma(\frac{2-\alpha}{2}) }.
\end{align}
\item In the sequel, we use the conventions $0^0=1$ and $\frac{1}{0} =\infty$.
\item For functions $f,g$, define
\begin{align}\label{eq:ZG04}
 (f  \odot  g)_r(s,t;x,y) :=\int_{\mR^d} f (s,r;x,z)  g (r,t;z,y) \dif z
\end{align}
and
\begin{align}\label{eq:ZG06}
  (f  \otimes  g) (s,t;x,y):=\int_s^t (f  \odot  g)_r (s,t;x,y) \dif r.
\end{align}
\end{itemize}

\subsection{Outline of paper}

The remainder of this paper is organized as follows. In Section \ref{sec:2}, we present the main results of this work. Section \ref{sec:3} introduces some basic concepts and properties of isotropic $\alpha$-stable processes, along with elementary estimates. In Section \ref{sec:heat-s}, we establish uniform estimates and $\alpha$-continuity results for heat kernels, which serve as the foundation for the subsequent analysis. In Section \ref{sec:th1}, we study the densities of solutions to SDEs and prove Theorem \ref{th-1.1} using the parametrix method, which relies on the estimates from Sections \ref{sec:3} and \ref{sec:heat-s}. Finally, building on Theorem \ref{th-1.1}, we prove the optimal convergence rates for invariant measures, namely Theorem \ref{th-6.1}, in Section \ref{sec:last}.

\section{Main results}\label{sec:2}
\subsection{Heat kernel estimates}

In this paper, we always suppose that the drift coeffient $b$  satisfies that
\begin{itemize}
\item[($\mathbf{H}_b^\beta$)] for some $\beta\in (0,1]$ and $\kappa_0>0$,
$$
|b(x)-b(y)|\leq \kappa_0(|x-y|^\beta \vee|x-y|).
$$
\end{itemize}
Under ($\textbf{H}_b^\beta$), it is well known that for each starting point $x \in \mathbb{R}^d$, the SDE \eqref{e1} admits a unique weak solution, and that for any $0 \le s < t < \infty$, $X^{(\alpha)}_{s,t}(x)$ possesses a density $\hp^{(\alpha)}_{b}(s,t;x,\cdot)$ (see~\cite{MZ22}). An analogous result for the SDE \eqref{e2} can be found in~\cite{MPZ21}.

\medskip

To show our results on heat kernels, we first introduce the deterministic regularized flow associated with the drift $b$, which is found in~\cite{MPZ21,MZ22}.
Letting $\rho$ be a nonnegative smooth function with support in the unit ball of $\mR^d$ such that $\int_{\mR^d} \rho(x) \dif x =1$, and $\rho_\epsilon(\cdot):=\epsilon^{-d} \rho(\epsilon^{-1} \cdot)$ with $\epsilon>0$. That is, $\{\rho_\epsilon\}_{\epsilon>0}$ represents a family of standard mollifiers. For
fixed $\epsilon>0$ and $(s,x)\in \mR_+ \times \mR^d$, the following regularized ODE admits
a unique solution  $\theta^{(\epsilon)}_{s,t}(x)$:
\begin{equation}\label{eq:flow}
\dot{\theta}_{s,t}^{(\epsilon)}(x)=b_\epsilon(\theta^{(\epsilon)}_{s,t}(x)), ~t \geq 0,
\quad
\theta^{(\epsilon)}_{s,s}(x)=x.
\end{equation}
where
$$
b_\epsilon(x):=(b*\rho_\epsilon)(x):=\int_{\mR^d} b(y)\rho_\epsilon(x-y) \dif y,
$$
Moreover, from Lemma 1.1 of~\cite{MPZ21}, and Lemma 2.1 of~\cite{MZ22}, we have the following result.

\begin{lemma}[Deterministic flow]\label{lem:flow}
Under $(\mathbf{H}_b^\beta)$, for each $\epsilon>0$ and $s,t \geq 0$, the mapping $x\mapsto \theta^{(\epsilon)}_{s,t}(x)$ is a $C^1$-diffeomorphism and its inverse is given by $x\mapsto \theta^{(\epsilon)}_{s,t}(x)$. Moreover, for all $s,r,t\geq 0$,
\begin{align*}
\theta^{(1)}_{s,t}(x) = \theta^{(1)}_{r,t}\circ \theta^{(1)}_{s,r}(x).
\end{align*}
For any $T>0$, there exists a constant $c=c(d,T,\kappa_0)\geq 1$ such that for all $s,t \in [0, T]$ and  $x,y\in \R^d$,
\iffalse
$$
|\theta^{(1)}_{s,t}(x)-y|+|t-s|\asymp_c
|\theta^{(\epsilon)}_{s,t}(x)-y|+|t-s| \asymp_c
|x-\theta^{(\epsilon)}_{t,s}(y)|+|t-s|
$$
and
\fi
\begin{align*}
|\theta^{(1)}_{s,t}(x)-y| \asymp_c |x-\theta^{(1)}_{t,s}(y)|, \quad |\theta^{(1)}_{s,t}(x)-\theta^{(1)}_{s,t}(y)| \asymp_c |x-y|.
\end{align*}
\end{lemma}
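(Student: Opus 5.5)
The plan is to work first with the mollified drift $b_\epsilon$ and only afterwards specialise to $\epsilon=1$. The basic tool is the following consequence of $(\mathbf{H}_b^\beta)$: $b_\epsilon$ is smooth and globally Lipschitz, with
\[
\|\nabla b_\epsilon\|_\infty \lesssim \kappa_0\,\epsilon^{\beta-1}\vee\kappa_0 .
\]
To see this, write $\nabla b_\epsilon(x)=\int (b(y)-b(x))\nabla\rho_\epsilon(x-y)\,\dif y$, which is allowed because $\int\nabla\rho_\epsilon=0$. The increment $|b(y)-b(x)|$ is at most $\kappa_0(\epsilon^\beta\vee\epsilon)$ on the support, and $\int|\nabla\rho_\epsilon|\lesssim\epsilon^{-1}$. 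We also have $|b_\epsilon(x)-b(x)|\le\kappa_0(\epsilon^\beta\vee\epsilon)$, and $|b(x)|\le |b(0)|+\kappa_0(1+|x|)$, so $b_\epsilon$ has linear growth.

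From this, the Cauchy--Lipschitz theorem gives a global unique solution $\theta^{(\epsilon)}_{s,t}(x)$ for all $s,t\ge 0$, including $t<s$ (backward flow). Uniqueness yields the flow property $\theta^{(\epsilon)}_{s,t}=\theta^{(\epsilon)}_{r,t}\circ\theta^{(\epsilon)}_{s,r}$ for all $s,r,t$. Taking $t=s$ there shows that $\theta^{(\epsilon)}_{t,s}$ is the two-sided inverse of $\theta^{(\epsilon)}_{s,t}$; the statement's "its inverse is given by $x\mapsto\theta^{(\epsilon)}_{s,t}(x)$" should be read with the indices swapped. Differentiability in $x$ follows from the standard theorem on smooth dependence on initial data, since $b_\epsilon\in C^\infty$. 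So both the map and its inverse are $C^1$, and it is a $C^1$-diffeomorphism.

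For the two-sided comparisons with $\epsilon=1$, let $L:=\|\nabla b_1\|_\infty\lesssim\kappa_0$. Gronwall applied to $\frac{\dif}{\dif t}(\theta_{s,t}(x)-\theta_{s,t}(y))$ gives
\[
e^{-L|t-s|}|x-y|\le|\theta^{(1)}_{s,t}(x)-\theta^{(1)}_{s,t}(y)|\le e^{L|t-s|}|x-y| .
\]
The lower bound comes from applying the upper bound to the inverse flow $\theta^{(1)}_{t,s}$. This proves the second equivalence with $c=e^{LT}$.

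For the first equivalence, set $x'=\theta^{(1)}_{t,s}(y)$, so that $y=\theta^{(1)}_{s,t}(x')$. Then
\[
|\theta^{(1)}_{s,t}(x)-y|=|\theta^{(1)}_{s,t}(x)-\theta^{(1)}_{s,t}(x')|\asymp|x-x'|=|x-\theta^{(1)}_{t,s}(y)|
\]
by the bi-Lipschitz estimate just proved. The constants depend only on $d,T,\kappa_0$ (and on the fixed mollifier $\rho$).

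The only delicate point is obtaining a uniform Lipschitz bound on $b_1$ despite $b$ being merely Hölder and unbounded. This is handled by the cancellation $\int\nabla\rho=0$ together with the local Hölder/global Lipschitz form of $(\mathbf{H}_b^\beta)$. Everything else is standard ODE theory.
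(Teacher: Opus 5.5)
Your proof is correct and complete for the statement as written. It takes a genuinely different route from the paper, in the sense that the paper gives no argument here: it imports the lemma from Lemma 1.1 of \cite{MPZ21} and Lemma 2.1 of \cite{MZ22}. Those results are set up for a general, possibly $(t-s)$-dependent mollification scale (the paper mentions $\epsilon=(t-s)^{1/\alpha}$). They also compare $\theta^{(\epsilon)}$ with $\theta^{(1)}$, which requires controlling $|b_\epsilon-b_1|\lesssim\kappa_0$ along trajectories and hence produces additive $|t-s|$ corrections.

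Your argument instead uses the fact that the quantitative claims here only involve $\epsilon=1$. The cancellation $\int\nabla\rho_1=0$, applied at scales $\le 1$ where $(\mathbf{H}_b^\beta)$ gives $|b(y)-b(x)|\le\kappa_0$, yields the uniform bound $\|\nabla b_1\|_\infty\lesssim\kappa_0$. Gronwall, applied forward and to the inverse flow, then gives exact bi-Lipschitz bounds with $c=\mathrm{e}^{LT}$. The first equivalence follows from the second by the substitution $x'=\theta^{(1)}_{t,s}(y)$.

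This is shorter and self-contained, and it gives purely multiplicative equivalences. What the cited, more general versions buy is the freedom to tie $\epsilon$ to $t-s$. That freedom is only needed in regimes where the paper's choice $\epsilon=1$ (justified via Remark 1.3 of \cite{MZ22} for $\alpha>1$) is not available.

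You are also right about the inverse. Since $\theta^{(\epsilon)}_{t,s}\circ\theta^{(\epsilon)}_{s,t}=\theta^{(\epsilon)}_{s,s}=\mathrm{id}$, the inverse of $\theta^{(\epsilon)}_{s,t}$ is $\theta^{(\epsilon)}_{t,s}$, and the statement has the indices in the wrong order.
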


\br
Following Remark 1.3 of~\cite{MZ22}, for the case $\alpha > 1$ considered in the present work, we simply set $\epsilon = 1$ instead of $\epsilon = (t-s)^{1/\alpha}$.
\er

Now we present our first main result. For the simplicity of notations, we define the following parameter set
$$
\Theta:= (d,\kappa_0,\beta),
$$
and introduce 
\begin{align*}
\varrho^{(k)}_{\gamma_1,\gamma_2}  (t,x):= t^{-(d+k)/\gamma_1} \wedge \frac{t}{|x|^{d+k+\gamma_2}},\ \ \text{for}\ \ \gamma_1,\gamma_2\in (0,2], k \geq 0,
\end{align*}
and
$$
\phi^{(\eta)}_{\gamma_1,\gamma_2} (s,t;x,y) :=
(t-s)^{\eta-1}\varrho^{(0)}_{\gamma_1,\gamma_2} (t-s, \theta^{(1)}_{s,t}(x)-y),
$$
where $  \theta^{(1)}_{s,t}$ is defined by \eqref{eq:flow}. In particular, when $\gamma_1=\gamma_2=\alpha$, let
$$
\varrho^{(k)}_\alpha (t,x):=\varrho^{(k)}_{\alpha,\alpha} (t,x),\quad \phi^{(\eta)}_{\alpha} (s,t;x,y):= \phi^{(\eta)}_{\alpha,\alpha} (s,t;x,y);
$$
if $k=0$,  let
\begin{align*}
\varrho^{(0)}_{\gamma_1,\gamma_2}  (t,x) := \varrho_{\gamma_1,\gamma_2}  (t,x).
\end{align*}
For $T \in (0,+\infty)$, denote
$$
\D_T:=\{ (s,t;x,y) \mid 0\leq s< t\leq T, x, y\in \mR^d\}.
$$

\bt[Heat kernel estimates] \label{th-1.1}
Fix $T>0$ and suppose that the condition ($\mathbf{H}_b^\beta$) holds. Then

\medskip\noindent
{\rm (i) (uniform estimates)}
for any  $\alpha \in [\alpha_0,2]$ with some $\alpha_0\in (1,2)$, there is a constant $c=c(\alpha_0,T,\Theta)>0$ such that for every $(s,t;x,y) \in \D_T$,
\begin{equation}\label{eq:main01}
|\hp_{b}^{(\alpha)}(s,t;x,y)|
\leq c   \varrho_\alpha
(t-s,\theta^{(1)}_{s,t}(x)-y);
\end{equation}

\medskip\noindent
{\rm (ii) ($\alpha$-continuity)}  for any  $\alpha \in [\alpha_0,2]$ with some $\alpha_0\in (\frac{12}{7},2)$, there exists  a constant $c=c(\alpha_0,T,\Theta)>0$ such that for each  $(s,t;x,y) \in \D_{T}$,
\begin{equation}\label{eq:main02}
|\hp_{b}^{(\alpha)}-\hp_{b}^{(2)}|(s,t;x,y)
\leq   c  (2-\alpha) \sum_{\gamma_1,\gamma_2\in\{\alpha,2\}}\phi^{(\frac{
7\alpha-12}{2\alpha})}_{\gamma_1,\gamma_2}(s,t;x,y).
\end{equation}
\et

As a special case (see Theorem~\ref{thm:WE01} and Theorem~\ref{thm:AA01}), we have the following similar estimates for the transition densities, denoted by $p^{(\alpha)}$, of $\alpha$-stable processes themselves.

\bp\label{prop:1.6} 
(i) Assume that $\alpha \in [\alpha_0,2]$ with
some $\alpha_0 \in (0,2)$, and $j \in \mN_0$. Then there is a constant $c >0$ depending only on
$d, \alpha_0,j$ such that for any $t> 0$,
$$
|\nabla^j p^{(\alpha)}(t,x)| \leq c \varrho^{(j)}_\alpha(t,x).
$$
(ii) Fix $T>0$. Assume that $\alpha \in [\alpha_0 ,2]$ with some $ \alpha_0 \in (0,2)$, and $j =1,2$. Then there is a constant $c >0$ depending only on $ d,T, \alpha_0 ,j$ such that for any $  t\in( 0,T]$,
\begin{align}\label{eq:HU89}
\begin{split}  | p^{(\alpha)}(t,x)- p^{(2)}(t,x)|
\leq c  (2-\alpha) (1+|\ln t|) (1+
t^\frac{\alpha-2}{\alpha} )
\sum_{\gamma_1,\gamma_2\in\{\alpha,2\}}  \varrho_{\gamma_1,\gamma_2}
(t,x),
 \end{split}
\end{align}
and
\begin{align*}
\begin{split} 
& 
\qquad  | \nabla^j p^{(\alpha)}(t,x)- \nabla^j p^{(2)}(t,x)|\\
& 
\leq c (2-\alpha) (1+|\ln t|)(1+
t^\frac{\alpha-2}{\alpha} )\sum_{k=1}^j
\sum_{\gamma_1,\gamma_2\in\{\alpha,2\}}|x|^{2k -j} \varrho^{(2k)}_{\gamma_1,\gamma_2}
(t,x).
 \end{split}
\end{align*}
\ep

\br
The above results (i), and \eqref{eq:HU89} in (ii), remain valid for the stochastic integral $\int_0^t \sigma(r) dL_r^{(\alpha)}$ when the coefficient $\sigma(r)$ is invertible and bounded (see Theorem \ref{thm:WE01} and Theorem \ref{thm:AA01}(i)). For $j=1,2$, result (ii) holds under the additional condition that $\sigma(r)$ is diagonal (see Theorem \ref{thm:AA01}(ii)). This naturally raises the question of whether the diagonal assumption can be relaxed.
\er

We close this subsection with an example to illustrate the main result.

\bexa[On the distance between laws of solutions to SDEs]
For any  $\alpha \in [\alpha_0,2]$ with some $\alpha_0\in (\frac{12}{7},2)$, there exists  a constant $c=c(\alpha_0,T,\Theta)>0$ such that for each  $t \in (0,T]$ and $x,y \in \mR^d$, 
\begin{align*}
\left\| {\rm Law} (X_t^{(\alpha)}(x)) - {\rm Law} (X_t^{(2)}(y)) \right\|_{\var} \leq c \left(
 t^{-1/2}|x-y|+(2-\alpha)  \sum_{\gamma_1,\gamma_2\in\{\alpha,2\}} t^{\tfrac{
7\alpha-12}{2\alpha}-\frac{\gamma_2(d+\gamma_1)}{\gamma_1(d+\gamma_2)}}\right),
\end{align*}
where $\|\mu_1 - \mu_2\|_{\var}:=\sup_{\|h\|_\infty \leq 1}|\mu_1(h)-\mu_2(h)|$ denotes the usual total variance. Precisely, by Theorem \ref{th-1.1}  and the equation (1.16) of~\cite{MPZ21}, it is easy to check that  
\begin{align*}
&\| {\rm Law} (X_t^{(\alpha)}(x)) - {\rm Law} (X_t^{(2)}(y)) \|_{\var} \nonumber\\
&\qquad\leq  \int_{\mR^d} |\hp_{b}^{(2)}(0,t;x,z)-\hp_{b}^{(2)}(0,t;y,z) | \dif z
 +  \int_{\mR^d} |\hp_{b}^{(\alpha)}-\hp_{b}^{(2)}|(0,t;x,z) \dif z\\
 & \qquad \lesssim t^{-1/2}|x-y|+(2-\alpha) t^{\tfrac{
7\alpha-12}{2\alpha}-1} \sum_{\gamma_1,\gamma_2\in\{\alpha,2\}} \int_{\mR^d}
\varrho_{\gamma_1,\gamma_2}
(t, \theta^{(1)}_{0,t}(x)-y) \dif y\\
&\qquad \lesssim 
  t^{-1/2}|x-y|+(2-\alpha) \sum_{\gamma_1,\gamma_2\in\{\alpha,2\}} t^{\tfrac{
7\alpha-12}{2\alpha}-\frac{\gamma_2(d+\gamma_1)}{\gamma_1(d+\gamma_2)}},
\end{align*}
where we used \eqref{eq:LK01} in the last inequality.
\eexa

\subsection{$\alpha$-dependence of invariant measures}

To study invariant measures, we make an additional dissipative assumption on the drift coefficients:

\begin{itemize}
\item[$(\mathbf{H}_b^{\rm diss})$] For some $r
\geq 0$,  there are two constants $c_0,c_1>0$ such that
$$
\langle x,b(x) \rangle \leq -c_0|x|^{2+r} +c_1.
$$
\end{itemize}

It is well-known (see Theorem 1.2 of \cite{ZZ23}) that under even weaker assumptions (i.e. $r>-\alpha$),  the semigroup $(P^{(\alpha)}_t)_{t \geq 0}$ associated with the Markov transition process $X^{(\alpha)}$ defined by
$$
P^{(\alpha)}_t f(x):=\EX f (X^{(\alpha)}_t(x)),\ \  f \in
{C}_b(\mathbb{R}^d),
$$
is strong Feller, and has a unique invariant measure (or stationary
distribution) $\mu^{(\alpha)}$, that is ${P_t^{(\alpha)}}^*
\mu^{(\alpha)}=\mu^{(\alpha)}$, for all $t\geq 0$, in the sense of
$$
\< {P_t^{(\alpha)}}^*\mu^{(\alpha)},f \> =\< \mu^{(\alpha)}, P^{(\alpha)}_t f \> =   \<\mu^{(\alpha)},f \>, \ \ f \in \mathscr{B}_b(\mR^d).
$$
Actually, for a probability measure $\mu$, ${P_t^{(\alpha)}}^*\mu$ is a
measure given by
$$
\int f(x) {P_t^{(\alpha)}}^* \mu(\dif  x) =\int P^{(\alpha)}_t f(x)   \mu(\dif  x),\ \ f \in \mathscr{B}_b(\mR^d).
$$

Before giving our second main result, we introduce some definitions about the distance between two probability measures. Let $V : \R^d \rightarrow [1, \infty )$ be a measurable function, and $\mu_1,\mu_2$ two probability measures. Denote the weighted total variance distance between $\mu_1$ and $\mu_2$ by
$$
\| \mu_1-\mu_2 \|_{\var,V}:=\sup_{\|h/V\|_\infty \leq 1}|\mu_1(h)-\mu_2(h)|.
$$
For the simplicity of notations, we define the following parameter set
$$
\widetilde \Theta:= (d,\kappa_0,\beta,c_0,c_1,r).
$$
Now we state our second main theorem.

\begin{theorem}[Optimal convergence rate]\label{th-6.1}
Assume that conditions ($\mathbf{H}_b^\beta$) and $(\mathbf{H}_b^{\rm diss})$ hold. Then for every  $\alpha \in [\alpha_0,2]$ with $\alpha_0 \in  (\frac{12}{7},2)$, there is a constant $
c=c(\alpha_0, p, \widetilde \Theta)>0$ such that
$$
\|\mu^{(\alpha)}-\mu^{(2)}\|_{\var,V_p}\leq c(2-\alpha),
$$
where $V_p(x)=1+|x|^p$ with $p \in (0,\alpha)$.
\end{theorem}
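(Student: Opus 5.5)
We combine the finite-time heat kernel comparison of Theorem \ref{th-1.1}(ii) with exponential ergodicity of the Gaussian semigroup in $\|\cdot\|_{\var,V_p}$, obtained from Harris's theorem. The starting point is the invariance identities $\mu^{(\alpha)}=(P^{(\alpha)}_t)^*\mu^{(\alpha)}$ and $\mu^{(2)}=(P^{(2)}_t)^*\mu^{(2)}$, which give, for a fixed time $t_0\in(0,1]$ and every $n\in\mN$, the telescoping decomposition
\begin{align*}
\mu^{(\alpha)}-\mu^{(2)}=\big((P^{(2)}_{nt_0})^*\mu^{(\alpha)}-(P^{(2)}_{nt_0})^*\mu^{(2)}\big)+\sum_{k=0}^{n-1}(P^{(2)}_{kt_0})^*\big((P^{(\alpha)}_{t_0})^*-(P^{(2)}_{t_0})^*\big)(P^{(\alpha)}_{(n-1-k)t_0})^*\mu^{(\alpha)}.
\end{align*}
Since $\mu^{(\alpha)}$ is invariant, the last factor is just $\mu^{(\alpha)}$. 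Letting $n\to\infty$ and using geometric ergodicity of $P^{(2)}$ in the weighted norm, the first term vanishes. We are left with the series $\sum_k (P^{(2)}_{kt_0})^*\nu$, where $\nu:=((P^{(\alpha)}_{t_0})^*-(P^{(2)}_{t_0})^*)\mu^{(\alpha)}$ is a signed measure of zero total mass.

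\textbf{Step 1 (Harris for $P^{(2)}$).} Using $(\mathbf{H}_b^{\rm diss})$ and Itô's formula for $V_p$ (or for $(1+|x|^2)^{p/2}$), derive the drift condition $P^{(2)}_{t_0}V_p\le \gamma V_p+K$ with $\gamma<1$. The minorization on sublevel sets of $V_p$ comes from a lower bound for $\hp^{(2)}_b$ on compacts, available from \cite{MPZ21}; alternatively, use the strong Feller property and irreducibility quoted from \cite{ZZ23}. Harris's theorem then gives, for zero-mass $\nu$,
\[
\|(P^{(2)}_{kt_0})^*\nu\|_{\var,V_p}\le C\varrho^k\|\nu\|_{\var,V_p},\qquad \varrho<1,
\]
so it suffices to show $\|\nu\|_{\var,V_p}\lesssim 2-\alpha$.

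\textbf{Step 2 (uniform moments).} We need $\sup_{\alpha\in[\alpha_0,2]}\mu^{(\alpha)}(V_p)<\infty$, which is where $p<\alpha$ enters. Apply the generator $\mathscr L^{(\alpha)}$ to $V_p$. The fractional part $-(-\Delta)^{\alpha/2}V_p$ is bounded uniformly in $\alpha\in[\alpha_0,2]$ when $p<\alpha_0$, provided one keeps track of the normalization $\cC(d,\alpha)$ in \eqref{eq:ZG03}: it vanishes like $2-\alpha$, which compensates the divergence of the small-jump integral. Combined with the drift term $\le -c|x|^{p+r}+c$, this yields a uniform Lyapunov inequality, and hence a uniform bound on $\mu^{(\alpha)}(V_p)$.

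\textbf{Step 3 (the one-step difference).} For $|h|\le V_p$ we have
\[
|\nu(h)|\le\int\mu^{(\alpha)}(\dif x)\int |\hp^{(\alpha)}_b-\hp^{(2)}_b|(0,t_0;x,y)V_p(y)\,\dif y.
\]
By Theorem \ref{th-1.1}(ii) (which needs $\alpha_0>12/7$), the inner integral is bounded by $(2-\alpha)$ times
\[
\sum_{\gamma_1,\gamma_2}t_0^{\eta-1}\int\varrho_{\gamma_1,\gamma_2}(t_0,\theta^{(1)}_{0,t_0}(x)-y)(1+|y|^p)\,\dif y .
\]
Since $\varrho_{\gamma_1,\gamma_2}$ has tail $|z|^{-d-\gamma_2}$ with $\gamma_2\ge\alpha_0>p$, this integral is finite and bounded by $c(1+|\theta^{(1)}_{0,t_0}(x)|^p)\lesssim V_p(x)$, using Lemma \ref{lem:flow} (the flow has linear growth). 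Integrating against $\mu^{(\alpha)}$ and using Step 2 gives $\|\nu\|_{\var,V_p}\lesssim 2-\alpha$. Summing the geometric series from Step 1 completes the proof.

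The main obstacle I expect is Step 2. To get constants uniform in $\alpha$, the nonlocal term must be handled with the explicit constant $\cC(d,\alpha)\sim(2-\alpha)$, splitting the integral into $|z|\le1$ (second-order Taylor expansion, giving a bounded Hessian term times $\cC/(2-\alpha)$) and $|z|>1$ (where $p<\alpha_0$ ensures integrability). A secondary point is that the Harris constants depend only on $P^{(2)}$, so they are automatically independent of $\alpha$.
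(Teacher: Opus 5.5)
Your proposal is correct and follows the paper's proof. It uses the same three ingredients: contraction of the Gaussian semigroup in $\|\cdot\|_{\var,V_p}$ via Harris's theorem (Proposition \ref{cor-6.00}), a moment bound on $\mu^{(\alpha)}$ uniform in $\alpha$ (Lemma \ref{lem-6.0}), and the one-step bound $\|((P^{(\alpha)}_{t_0})^*-(P^{(2)}_{t_0})^*)\mu^{(\alpha)}\|_{\var,V_p}\lesssim 2-\alpha$, obtained by integrating Theorem \ref{th-1.1}(ii) against $V_p$ (Corollary \ref{cor-6.1}). The only difference is cosmetic: the paper applies the identity $\mu^{(\alpha)}-\mu^{(2)}=\nu+(P^{(2)}_{t_0})^*(\mu^{(\alpha)}-\mu^{(2)})$ once, at a large $t_0$ where the contraction factor is $\tfrac12$, and absorbs the last term, whereas you iterate it at a small $t_0$ and sum a geometric series; your explicit restriction $p<\alpha_0$ is what keeps the constants uniform in $\alpha$, and some restriction of this kind is implicit in the paper as well.
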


\begin{remark}
(i) If  $V_p=1$, then the theorem above arrives at the total variance distance.

\noindent
(ii)
The optimal transportation
cost between two probability measures is defined as: for $p\geq 0$,
$$
\mathcal{T}_p(\mu_1,\mu_2):= \inf\left\{  \EX |X-Y|^p, ~ \mathrm{Law} (X)=\mu_1,~ \mathrm{Law} (Y)=\mu_2\right\}.
$$
Due to Proposition 7.10 in \cite{Villani-2}, the theorem above also holds for $\mathcal{T}_p$ with $p\in(0,\alpha)$. Notice that by Theorem 7.3 of \cite{Villani-2}, one sees that when $p\in [1,+\infty)$, $\mathcal{T}_p = \dis_{W_p}^p$; $p \in [0,1)$, $\mathcal{T}_p= \dis_{W_p}$, where $\dis_{W_p}$ is the Kantorovich or Wasserstein-$p$ metric.
\end{remark}

\begin{remark}
Due to Proposition 5.1 of~\cite{DSX23} and Proposition 5.1 of~\cite{DLSX26}, we know that  $2-\alpha$ is the optimal rate in the sense of Wasserstein-1 metric and the total variation distance. In the fact, the same example demonstrates that $2-\alpha$ is also the optimal rate for  the weighted total variance distance. Precisely, consider the Ornstein-Uhlenbeck case, i.e. $b(x)=-x$.  Then the ergodic measure  $\mu^{(\alpha)}$ is given by the law of $\alpha^{-1/\alpha}L^{(\alpha)}_1$.   Using the method of characteristic function, one sees that
\begin{align*}
 ||\mu^{(\alpha)} -\mu^{(2)} ||_{\var,V_p}\geq ||\mu^{(\alpha)} -\mu^{(2)} ||_{\var} \geq \left| \EX \cos(\alpha^{-1/\alpha}L^{(\alpha)}_1)- \EX \cos ( \tfrac{1}{\sqrt{2}}L^{(2)}_1) \right| \geq c(2-\alpha).
\end{align*}
\end{remark}

\section{Preliminaries}\label{sec:3}

\subsection{Isotropic $\alpha$-stable processes with $\alpha \in (0,2]$}

In this subsection, we introduce some basic concepts and properties of isotropic $\alpha$-stable processes.  Let $(L_t^{(\alpha)})_{t \geq 0}$ be a $d$-dimensional rotationally invariant $\alpha$-stable process with $\alpha \in (0,2]$, that is its characteristic exponent is given by
\eqref{eq:ZG02}.

Since the $\alpha$-stable  process $L^{(\alpha)}$ with $\alpha \in(0,2]$ has the scaling property, $(\lambda^{-1/\alpha} L_{\lambda t}^{(\alpha)})_{t\geq 0} \stackrel{(d)}{=} ( L_t ^{(\alpha)})_{t \geq 0}$ for $\forall \lambda >0$, it is easy to see that for every $\alpha \in(0,2]$,
$$
p^{(\alpha)}(t,x) = \lambda^{d/\alpha} p^{(\alpha)}(\lambda t,\lambda ^{1/\alpha} x),\ \ \forall t > 0,
$$
where $p^{(\alpha)}(t,x)$ is defined by \eqref{eq:hp01} and is the density of $L_t^{(\alpha)}$.
In particular,
\begin{align}\label{eq:SS01}
p^{(\alpha)}(t,x) = t^{-d/\alpha}p^{(\alpha)}(1,t^{-1/\alpha}x),\ \ \forall t > 0.
\end{align}
Moreover, by~\cite[Eq.(19)]{BGR14}, we have that for each $\alpha\in (0,2)$,
\begin{align}\label{eq:KL01}
p^{(\alpha)}(t,x)\leq \frac{d4^\alpha \Gamma(\frac{d+\alpha}{2})}{2(1-2^{-d}) \pi^{d/2} (1-\e^{-1})} \frac{t}{|x|^{d+\alpha}}.
\end{align}
As for $\alpha=2$, the situation becomes into the Gaussian density, that is
\begin{align}\label{eq:PP04}
p^{(2)}(t,x) =(4\pi t)^{-\frac{d}{2}}\e^{-\frac{|x|^2}{4t}},\ \  t>0, x \in \mR^d,
\end{align}
which satisfies the following fact:
\begin{align}\label{eq:IU01-XM}
p^{(2)}(t,x+y) \leq 2^{d/2} p^{(2)}(2t,x) \e^{\frac{|y|^2}{4t}}.\end{align}

\subsubsection{The subordination representation}

For $\alpha\in(0,2)$, it is well known that the~$d$-dimensional isotropic $\alpha$-stable process can be represented by
\begin{align}\label{25eq:AA00}
L_t^{(\alpha)} = W_{S^{(\alpha)}_t},
\end{align}
where $\{W_t\}_{t\geq 0}$ is a $d$-dimensional  Brownian motion, and $\{S^{(\alpha)}_t\}_{ t\geq 0}$ is an ${\alpha} / {2}$-stable subordinator which is independent of Brownian
motion $\{W_t\}_{t\geq 0}$ and  is a nonnegative one-dimensional
L\'{e}vy process having the following Laplace transform:
$$
\EX \e^{-\lambda S^{(\alpha)}_t}
=\e^{-t \lambda^\frac{\alpha}{2}},\ \  \text{for}\ \ \lambda>0, t \geq 0.
$$
Hence, an isotropic $\alpha$-stable process with $\alpha
\in (0,2)$ is also called a {\it subordinated Brownian motion}.
From the Laplace transform, it is easy to see that $\lambda^{-2/\alpha} S_{\lambda t}^{(\alpha)} \overset{(d)}{=} S_t^{(\alpha)}$, for $ \lambda >0$, and then the probability density of the subordinator $S_t^{(\alpha)}$ has the following scaling property:
\begin{align}\label{eq:WW01}
\eta_\alpha(t,y)=\lambda^{2/\alpha}\eta_\alpha(\lambda t ,\lambda^{2/\alpha} y).
\end{align}
Consequently, by the subordination representation \eqref{25eq:AA00}, we get that
\begin{align}\label{eq:KL03}
p^{(\alpha)}(t,x)=\EX p^{(2)}(S^{(\alpha)}_t,x)=\int_0^\infty p^{(2)} (u,x)\eta_\alpha(t,u)\dif u, \ \ \alpha \in (0,2),
\end{align}
where $p^{(2)}(t,x)$ is the Gaussian probability density given by \eqref{eq:PP04}. By Lemma 4.1 of~\cite{DS19}, or Eq.(25.5) in p.162 of~\cite{Sa99}, the moments of order $\vartheta \in(-\infty,\alpha/2)$ of an $\alpha/2$-stable subordinator $(S^{(\alpha)}_t)_{t\geq 0}$ are given by
\begin{align}\label{eq:SS}
\mE (S_t^{(\alpha)})^\vartheta = \frac{\Gamma(1- 2\vartheta/\alpha )}{\Gamma(1-\vartheta)} t^{2\vartheta/\alpha},\ \  t >0.
\end{align}

\subsubsection{Infinitesimal  generators and L\'evy measures}

It is well-known that the infinitesimal  generator of a Brownian motion is the Laplacian $  \Delta:=
\sum_{i=1}^d \p_{x_i}^2$,  and  the one of $\alpha$-stable process is the fractional Laplacian operator $\Delta^{\alpha/2}:=-(-\Delta)^{\alpha/2}$ defined by
\begin{align*}
\begin{split}
\Delta^{\alpha/2} \varphi (x)
& :=  \int_{\mR^d} \( \varphi(x+z) - \varphi(x) - z \1_{|z|\leq 1} \cdot \nabla \varphi(x) \)\nu^{(\alpha)}(\dif z)\\
& = \frac{1}{2} \int_{\mR^d} \( \varphi(x+z) + \varphi(x-z) - 2\varphi(x) \)\nu^{(\alpha)}(\dif z),
\end{split}
\end{align*}
where $\nu^{(\alpha)}(\dif z) := \cC(d,\alpha)|z|^{-d-\alpha}$ is called the {\it L\'evy measure}, and the constant $\cC(d,\alpha)$  is defined by \eqref{eq:ZG03}. Notice that operators $\Delta^{\alpha/2}$ are symmetric in the sense of
\begin{align}\label{eq:RR02}
\<f, \Delta^{\alpha/2} g\>=\<\Delta^{\alpha/2} f,  g\>,\ \ \alpha \in (0,2].
\end{align}
Here and below we use the convention $\Delta^{2/2} = \Delta$. In fact,  the density $p^{(\alpha)}(t,x)$ introduced before is the heat kernel of the operator $\Delta^{\alpha/2}$ for each $\alpha \in (0,2]$, i.e.,
\begin{align}\label{eq:RR01}
\p_t p^{(\alpha)}(t,x) = \Delta^{\alpha/2} p^{(\alpha)}(t,x),\ \ \lim_{t \downarrow 0} p^{(\alpha)}(t,x) = \delta_0(x),
\end{align}
where $\delta_0$ is the Dirac measure at point zero.

We conclude this subsubsection by presenting some trivial but useful facts about L\'evy measure $\nu^{(\alpha)}$ for $\alpha \in (0,2)$. By Lemma 2.2 of ~\cite{DSX23}, there is a constant $c>0$ independent of $\alpha \in (0,2)$ such that
\begin{align}\label{eq:JH00}
\omega_{d-1} \cC(d,\alpha) \leq c d( 2-\alpha)
\end{align}
and
\begin{align}\label{C-d-alpha} 
\left|\frac{\omega_{d-1} \cC(d,\alpha)}{ d (2-\alpha)}-1\right| \leq c
(2-\alpha) \log(1+d),
\end{align}
where $\cC(d,\alpha)$ is defined by \eqref{eq:ZG03}. Hence, fixed $\delta>0$, for any $ \vartheta >\alpha $,
\begin{align}\label{eq:JH01}
\int_{ |z|\leq \delta}|z|^\vartheta  \nu^{(\alpha)}(\dif z)=\frac{\omega_{d-1} \cC(d,\alpha)}{\vartheta -\alpha} \delta^{\vartheta -\alpha} \lesssim_d  \frac{2-\alpha }{\vartheta -\alpha} \delta^{\vartheta -\alpha}, \end{align}
and for $\vartheta < \alpha$,
\begin{align}\label{eq:JH02}
\int_{ |z|>\delta}|z|^\vartheta  \nu^{(\alpha)}(\dif z)=\frac{\omega_{d-1} \cC(d,\alpha)}{\alpha-\vartheta} \delta^{\vartheta -\alpha}\lesssim_d \frac{2-\alpha}{\alpha-\vartheta} \delta^{\vartheta -\alpha}.
\end{align}
Furthermore, it is easy to see that, for any $\gamma_2>\alpha>\gamma_1\geq 0$,
\begin{align*}
 \int_{|z|\leq 1} |z|^{\gamma_2} \nu^{(\alpha)}(\dif z) + \int_{|z|>1} |z|^{\gamma_1}\nu^{(\alpha)}(\dif z)
& =\omega_{d-1} \cC(d,\alpha) \left( \tfrac{1}{\gamma_2 - \alpha} + \tfrac{1}{\alpha - \gamma_1} \right) \\
& \lesssim_d (2-\alpha) \left( \tfrac{1}{\gamma_2 - \alpha} + \tfrac{1}{\alpha - \gamma_1} \right)
<\infty,
\end{align*}
and for any $\gamma \in [0,2)$,
\begin{equation}\label{un-alpha-0}
 \sup_{ \alpha \in (0, 2)} \int_{ |z|\leq 1} |z|^{2}
  \nu^{(\alpha)}(\dif z)+\sup_{\alpha \in [\frac{\gamma+2}{2}, 2)}
  \int_{ |z|> 1} |z|^\gamma \nu^{(\alpha)}(\dif z)<\infty.
\end{equation}

%%%%%%%%%%%%%%%%%%%%%%%%%%%%%%%%%%%%
\subsection{Some basic inequalities of $\varrho^{(k)}_{\gamma_1,\gamma_2}(t,x)$}
%%%%%%%%%%%%%%%%%%%%%%%%%%%%%%%%%%%%%%%%%%%

For $k \in [0,\infty)$ and $\gamma_i \in (0,\infty)$,  $i=1,2$, define
\begin{align}\label{eq:ZG100}
\varrho_{\gamma_1,\gamma_2}^{(k)} (t,x):= t^{-(d+k)/\gamma_1} \wedge \frac{t}{|x|^{d+k+\gamma_2}}.
\end{align}
Simply,  when $\gamma_1=\gamma_2=\gamma$, we denote
\begin{align*}
\varrho^{(k)}_{\gamma}(t,x):=\varrho^{(k)}_{\gamma,\gamma}(t,x).
\end{align*}
In particular, we drop the superscript $(k)$ when $k=0$, and denote
\begin{align}\label{eq:ZG05}
\varrho_\gamma(t,x):=\varrho_{\gamma,\gamma}(t,x).
\end{align}
Trivially, it is easy to see that
\begin{align}\label{eq:WK01}
\varrho^{(k)}_{\gamma_1,\gamma_2}(t,x) =
\begin{cases}
\frac{t}{|x|^{d+k+\gamma_2}}, & \text{when}\ \  |x|\geq  t^{\frac{d+k+\gamma_1}{\gamma_1(d+k+\gamma_2)}} ,\\
 t^{-(d+k)/\gamma_1} , & \text{when}\ \ |x|\leq   t^{\frac{d+k+\gamma_1}{\gamma_1(d+k+\gamma_2)}} ,
\end{cases}
\end{align}
and then we have
\begin{align}\label{eq:HY00}
\varrho^{(k+j)}_{\gamma}(t,x) = (t^{-1/\gamma} \wedge |x|^{-1})^j \varrho^{(k)}_{\gamma}(t,x), \ \ \hbox{for}\ \  0 \leq j ,k,
\end{align}
and
\begin{align}\label{eq:FG01}
\frac{t}{\(t^{\frac{d+k +\gamma_1}{\gamma_1(d+k +\gamma_2)}} +|x|\)^{d+k +\gamma_2} }\leq \varrho^{(k )}_{\gamma_1,\gamma_2}(t,x) \leq 2^{d+k +\gamma_2} \frac{t}{\(t^{\frac{d+k +\gamma_1}{\gamma_1(d+k +\gamma_2)}} +|x|\)^{d+k+\gamma_2} }.
\end{align}
Observe that for every $c_1>0$, $c_2 >1$, and $\beta>0$, there is a constant $ c=\frac{c_2}{c_2-1}\vee c_1>0$,
\begin{align*}
(t^{1/\beta} + |x+z|)^{-1}\leq  c(t^{1/\beta} + |x|)^{-1},\ \ \hbox{when}\ \   |z| \leq (c_1 t^{1/\beta})\vee (|x|/c_2),
\end{align*}
which together with \eqref{eq:FG01} derives that
\begin{align}\label{eq:GH01}
\varrho^{(k)}_{\gamma_1,\gamma_2} (t,x+z) \leq (2c)^{(d+k+\gamma_2)} \varrho^{(k)}_{\gamma_1,\gamma_2} (t,x),\ \ \hbox{for}\ \  |z| \leq \left(c_1 t^{\frac{d+k+\gamma_1}{\gamma_1(d+k+\gamma_2)}} \right)\vee (|x|/c_2).
\end{align}

We prepare the following lemmas for later use.

\begin{lemma}\label{lem-2.4.2}
Let $k  \in \mN_0,\gamma_i \in [\alpha_0,2], i=1,2,3,4,$ with some $\alpha_0 \in (0,2)$.
 There is a constant $ c=c(d,\alpha_0,k)>0$, such that for all $x,y\in \mR^d$ and $t,s>0$,
\begin{align}\label{eq:HY01}
 \varrho_{\gamma_1,\gamma_2}^{(k)}(t, x-y)\varrho_{\gamma_3,\gamma_4}^{(k)}(s, y)  \leq c \left( \varrho^{(k)}_{\gamma_1,\gamma_2}(t, x-y)+\varrho^{(k)}_{\gamma_3,\gamma_4}(s, y) \right)  \sum_{\substack{i=1,3\\ j =2,4}} \varrho^{(k)}_{\gamma_i,\gamma_j  }(t+s,x).
\end{align}
\end{lemma}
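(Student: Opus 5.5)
The plan is to reduce \eqref{eq:HY01} to the elementary observation that at least one of $|x-y|$, $|y|$ is at least $|x|/2$. We split into two cases according to which one it is. By symmetry of the roles of $(t,x-y,\gamma_1,\gamma_2)$ and $(s,y,\gamma_3,\gamma_4)$ it suffices to treat $|x-y|\ge |x|/2$. In that case the factor $\varrho^{(k)}_{\gamma_3,\gamma_4}(s,y)$ is simply kept. We then aim to show
\begin{align*}
\varrho^{(k)}_{\gamma_1,\gamma_2}(t,x-y)\lesssim \sum_{i=1,3,\ j=2,4}\varrho^{(k)}_{\gamma_i,\gamma_j}(t+s,x),
\end{align*}
which gives the bound with the second summand on the right of \eqref{eq:HY01}.

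To prove this, use $|x-y|\ge |x|/2$ to write $\varrho^{(k)}_{\gamma_1,\gamma_2}(t,x-y)\le t^{-(d+k)/\gamma_1}\wedge 2^{d+k+2} t|x|^{-d-k-\gamma_2}$. One then distinguishes $t\ge s$ from $t<s$. If $t\ge s$, then $t\asymp t+s$ (within a factor 2), and since all exponents $(d+k)/\gamma_1\le (d+k)/\alpha_0$ and $d+k+\gamma_2\le d+k+2$ are bounded uniformly, this is $\lesssim \varrho^{(k)}_{\gamma_1,\gamma_2}(t+s,x)$ with constant depending only on $d,\alpha_0,k$.

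If $t<s$, the $t$-factor is not comparable to $t+s$. Here I would instead use that the original product contains $\varrho^{(k)}_{\gamma_3,\gamma_4}(s,y)\le s^{-(d+k)/\gamma_3}$. Swapping which factor is bounded by its sup, we get
\begin{align*}
\varrho^{(k)}_{\gamma_1,\gamma_2}(t,x-y)\varrho^{(k)}_{\gamma_3,\gamma_4}(s,y)\le \varrho^{(k)}_{\gamma_1,\gamma_2}(t,x-y)\,\bigl(s^{-(d+k)/\gamma_3}\wedge \varrho^{(k)}_{\gamma_3,\gamma_4}(s,y)\bigr).
\end{align*}
If moreover $|y|\ge|x|/2$, then $\varrho^{(k)}_{\gamma_3,\gamma_4}(s,y)\lesssim s^{-(d+k)/\gamma_3}\wedge s|x|^{-d-k-\gamma_4}\asymp\varrho^{(k)}_{\gamma_3,\gamma_4}(t+s,x)$ because $s\asymp t+s$, and we finish with the first summand. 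So the genuinely remaining subcase is $t<s$, $|y|<|x|/2$, hence $|x-y|>|x|/2$.

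In this subcase we bound $\varrho^{(k)}_{\gamma_1,\gamma_2}(t,x-y)\le t|x|^{-d-k-\gamma_2}\cdot c\le s|x|^{-d-k-\gamma_2}c$ and $\varrho^{(k)}_{\gamma_3,\gamma_4}(s,y)\le s^{-(d+k)/\gamma_3}$. The product is then $\le c\,\varrho^{(k)}_{\gamma_1,\gamma_2}(t,x-y)\cdot\bigl(s^{-(d+k)/\gamma_3}\wedge s|x|^{-d-k-\gamma_2}\cdot(\text{stuff})\bigr)$. More cleanly, the product is bounded by
\begin{align*}
\varrho^{(k)}_{\gamma_1,\gamma_2}(t,x-y)\cdot\min\{s^{-(d+k)/\gamma_3},\ \varrho^{(k)}_{\gamma_3,\gamma_4}(s,y)\}
\end{align*}
and also by $\varrho^{(k)}_{\gamma_3,\gamma_4}(s,y)\cdot c\,s|x|^{-d-k-\gamma_2}$. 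Taking the minimum of the two representations, the product is at most
\begin{align*}
\varrho^{(k)}_{\gamma_3,\gamma_4}(s,y)\,(s^{-(d+k)/\gamma_3}\wedge cs|x|^{-d-k-\gamma_2})\cdot\frac{\varrho^{(k)}_{\gamma_1,\gamma_2}(t,x-y)}{s^{-(d+k)/\gamma_3}}\wedge\cdots,
\end{align*}
which is awkward. The cleaner route is to use $\varrho^{(k)}_{\gamma_1,\gamma_2}(t,x-y)\le c\,s|x|^{-d-k-\gamma_2}$ together with $\varrho^{(k)}_{\gamma_1,\gamma_2}(t,x-y)\le t^{-(d+k)/\gamma_1}$ only when needed. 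Note that $s^{-(d+k)/\gamma_3}\wedge s|x|^{-d-k-\gamma_2}\asymp\varrho^{(k)}_{\gamma_3,\gamma_2}(t+s,x)$. The product is $\le \varrho^{(k)}_{\gamma_1,\gamma_2}(t,x-y)\cdot s^{-(d+k)/\gamma_3}$ and $\le \varrho^{(k)}_{\gamma_3,\gamma_4}(s,y)\cdot cs|x|^{-d-k-\gamma_2}$. Hence it is bounded by $(\varrho^{(k)}_{\gamma_1,\gamma_2}(t,x-y)+\varrho^{(k)}_{\gamma_3,\gamma_4}(s,y))\cdot c\,(s^{-(d+k)/\gamma_3}\wedge s|x|^{-d-k-\gamma_2})$, which is $\lesssim(\ldots)\varrho^{(k)}_{\gamma_3,\gamma_2}(t+s,x)$. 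This is exactly the mixed term $i=3$, $j=2$ in the sum, explaining why all four index pairs appear.

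I expect the main obstacle to be exactly this mixed subcase, in particular the bookkeeping needed to recognize which mixed index pair $\varrho^{(k)}_{\gamma_i,\gamma_j}$ arises. One also has to check that all comparison constants, such as $2^{(d+k)/\gamma}$ and $2^{d+k+\gamma}$, depend only on $d,\alpha_0,k$, which holds since $\gamma_i\in[\alpha_0,2]$.
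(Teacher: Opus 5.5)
Your proof is correct and is essentially the paper's argument written out by cases. The paper uses $ab\le(a+b)(a\wedge b)$ and bounds the minimum of the two factors by taking the on-diagonal bound from whichever of $t,s$ is larger and the tail bound from whichever of $|x-y|,|y|$ is at least $|x|/2$. Your final step $\min(\varrho_1 A,\varrho_2 B)\le\max(\varrho_1,\varrho_2)\min(A,B)$ does exactly this, with $\varrho_1,\varrho_2$ the two factors, $A=s^{-(d+k)/\gamma_3}$ and $B=c\,s|x|^{-d-k-\gamma_2}$; this is also why the mixed pairs $(i,j)=(3,2),(1,4)$ appear.

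In a write-up, drop the abandoned ``awkward'' display. Also note that once $|x-y|\ge|x|/2$ and $t<s$, the mixed argument already suffices, so the further split on $|y|$ is unnecessary.
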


\begin{proof}
Observe that
\begin{align*}
 \varrho^{(k)}_{\gamma_1,\gamma_2}(t,  x-y) & \wedge \varrho^{(k)}_{\gamma_3,\gamma_4}(s, y )
= t^{-\frac{d+k}{\gamma_1}}\wedge  \frac{t}{|x-y|^{d+k+\gamma_2}}\wedge s^{-\frac{d+k}{\gamma_3}} \wedge
\frac{s}{|y |^{d+k+\gamma_4}}  \\
&\leq \left[\left(\frac{t+s}{2}\right)^{-\frac{d+k}{\gamma_1}}\vee
\left(\frac{t+s}{2}\right)^{-\frac{d+k}{\gamma_3}}\right]\wedge \left[
 \frac{t+s}{(|x|/2)^{d+k+\gamma_2}} \vee  \frac{t+s}{(|x|/2)^{d+k+\gamma_4}} \right]\\
&\lesssim \sum_{\substack{i=1,3\\ j =2,4}} \varrho^{(k)}_{\gamma_i,\gamma_j  }(t+s,x),
\end{align*}
which derives the desired result since for any $a,b>0$,  $ab=(a\vee b)(a\wedge b) \leq (a+ b)(a\wedge b) $. The proof is complete.
\end{proof}

\br
When $\gamma_i =\alpha$, the inequality \eqref{eq:HY01} is known as  the (3P)-inequality (see Proposition 2.4 of~\cite{WZ15} for example): for all $t,s>0$ and $x,y,z \in \mR^d $,
$$
\varrho_\alpha(t,x-z) \varrho_\alpha(s,z-y) \leq c \( \varrho_\alpha(t,x-z) + \varrho_\alpha(s,z-y) \) \varrho_\alpha(s+t,x-y).
$$
\er

Moreover, by the polar formula, it is easy to check that, for $0\leq \vartheta< \gamma_2$,
\begin{align}\label{eq:LK01}
\int_{\mR^d}|x|^\vartheta \varrho_{\gamma_1,\gamma_2} (t,x) \dif x   \lesssim_d t^{1-\frac{(\gamma_2-\vartheta)(d+\gamma_1)}{\gamma_1(d+\gamma_2)}}.
\end{align}

\bl\label{lem:ZX01}
Assume that $j=0,1$ and $\alpha \in [\alpha_0,2)$ with some $\alpha_0\in(0,2)$. Then there is a constant $c=c(d,\alpha_0)>0$ such that for $s>0$ and $x\in \mR^d$,
\begin{align}\label{eq:ZZ09}
\int_{ |z|>s^{1/\alpha}\vee \frac{|x|}{2}}  \varrho_{\alpha} (s,x+z)\nu^{(\alpha)}(\dif z)
 \leq c
 (2-\alpha)  s^{-1}
 \varrho_{\alpha} (s,x) .
\end{align}
\el

\begin{proof}
It is similar to the proof of (2.28) in Theorem 2.4 from~\cite{CZ16}. If $|x|  \leq  s^{1/\alpha}$, then by the  definition  of $ \varrho_{\alpha} $ and \eqref{eq:JH02},
\begin{align*}
l.h.s ~\text{of} ~\eqref{eq:ZZ09}
 & \leq s^{-d/\alpha}  \int_{ |z|>s^\frac{1}{\alpha}} \nu^{(\alpha)}(\dif z)
 \lesssim  (2-\alpha) s^{-d/\alpha-1}     \\
 &
\overset{\eqref{eq:WK01}}{\lesssim}
 (2-\alpha) s^{-1}
\varrho_{\alpha} (s,x) .
\end{align*}
If $|x| >  s^{1/\alpha}$, then by  \eqref{eq:JH00}, we have that
\begin{align*}
l.h.s ~\text{of} ~\eqref{eq:ZZ09}
& \lesssim
(2-\alpha)  |x|^{-d-\alpha}   \int_{ |z|>  \frac{|x|}{2} }
 \varrho_{\alpha} (s,x+z)   \dif z \\
& \lesssim
 (2-\alpha)   |x|^{-d-\alpha}  \int_{\R^d}  \varrho_{\alpha} (s,x+z)   \dif z  \\
 &
  \overset{\eqref{eq:LK01}}{\lesssim} (2-\alpha)
|x|^{-d-\alpha}
 \overset{\eqref{eq:WK01}}{\lesssim}  (2-\alpha)  s^{-1} \varrho_{\alpha} (s,x)  .
\end{align*}
Combining the above calculus, we get the desired result.
\end{proof}

\bl
Assume that $k\in N_0$, and $T>0$, and $\gamma_1,\gamma_2 \in [\alpha_0,2]$ with some $\alpha_0 \in (1,2)$. Then for any $p \in [0,k]$, there is a constant $c=c(T,p,\alpha_0)>0$ such that for any $t\in [0,T]$ and $x \in \mR^d$,
\begin{equation}\label{eq:LRW01}
\frac{|x|^p\varrho_{\gamma_1,\gamma_2}^{(k)}(t,x) }{\varrho_{\gamma_1,\gamma_2}(t,x)} 
\leq c t^{p/\check \gamma-k/\hat\gamma}.
\end{equation}
where
$$
\check \gamma := \gamma_1 \vee \gamma_2, \qquad \hat \gamma := \gamma_1 \wedge \gamma_2.
$$
\el

\begin{proof}
If $ |x| \leq t^{\frac{d+\gamma_1}{\gamma_1(d+\gamma_2)}}$, then by \eqref{eq:WK01}, we have
$$
\frac{|x|^p\varrho_{\gamma_1,\gamma_2}^{(k)}(t,x) }{\varrho_{\gamma_1,\gamma_2}(t,x)}\leq \frac{|x|^p t^{-(d+k) /\gamma_1}}{t^{-d/\gamma_1} } \leq t^{\frac{p(d+\gamma_1)}{\gamma_1(d+\gamma_2)}-k/\gamma_1};
$$
If $ |x| \geq t^{\frac{d+\gamma_1}{\gamma_1(d+\gamma_2)}}$, then by \eqref{eq:WK01} again, we get
$$
\frac{|x|^p\varrho_{\gamma_1,\gamma_2}^{(k)}(t,x) }{\varrho_{\gamma_1,\gamma_2}(t,x)}\leq \frac{|x|^{p-d-k-\gamma_2}}{|x|^{-d-\gamma_2}}\leq |x|^{p-k}
=|x|^{\frac{p(d+\gamma_1)}{\gamma_1(d+\gamma_2)} - \frac{k(d+\gamma_1)}{\gamma_1(d+\gamma_2)}}.
$$
Hence,
\begin{align*}
\frac{|x|^p\varrho_{\gamma_1,\gamma_2}^{(k)}(t,x) }{\varrho_{\gamma_1,\gamma_2}(t,x)} \lesssim_{T,\alpha_0} t^{\frac{p(d+\gamma_1)}{\gamma_1(d+\gamma_2)} - \frac{k}{\gamma_1}\left ( \frac{d+\gamma_1}{d+\gamma_2} \vee 1 \right)}.
\end{align*}
The desired result follows from the fact that if $a,b,c,d \in \mR_+$ satisfy $\frac{a}{b}<\frac{c}{d}$, then $\frac{a}{b} < \frac{a+c}{b+d}< \frac{c}{d}$.
\end{proof}

\subsection{Some uniform estimates of densities}

First of all, recalling the elementary fact that for any real number $n >0$,
\begin{align*}
\e^{-u} \leq n^n  u^{-n}, \ \ \forall u \geq 0,
\end{align*}
one sees that for any real number $n \geq 0$ and constant $c>0$,
\begin{align}\label{eq:PP02}
\e^{-\frac{|x|^2}{c}} \leq (c n)^n |x|^{-2n},
\end{align}
where we used the conventions $0^0 =1$  and $\frac{1}{0}=\infty$.

The following lemma is similar to~Lemma 2.2 in \cite{CZ16} with a slight extension, and will be used in the proof of Theorem \ref{thm:WE01}.

\begin{lemma}\label{lem:ZT00}
Assume that $\alpha \in [\alpha_0,2]$ with some $\alpha_0\in (0,2)$. There is a constant $c >0$ depending only on $d, \alpha_0$ such that, for any $ t> 0$,
\begin{align}\label{eq:NH01-1}
p^{(\alpha)}(t,x) \leq c \varrho_\alpha(t,x),
\end{align}
where $p^{(\alpha)}$ is the density of $\alpha$-stable process given by \eqref{eq:hp01}, and $p^{(2)}$ and $\varrho_\alpha$ are defined by \eqref{eq:PP04} and \eqref{eq:ZG05} respectively.
\end{lemma}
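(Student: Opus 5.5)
By the scaling identity \eqref{eq:SS01}, and since $\varrho_\alpha(t,x)=t^{-d/\alpha}\varrho_\alpha(1,t^{-1/\alpha}x)$, it suffices to prove the bound at $t=1$. That is, we need
\[
p^{(\alpha)}(1,x)\le c\,(1\wedge |x|^{-d-\alpha})
\]
with $c=c(d,\alpha_0)$ independent of $\alpha\in[\alpha_0,2]$.

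The on-diagonal part is immediate from the Fourier representation \eqref{eq:hp01}:
\[
p^{(\alpha)}(1,x)\le (2\pi)^{-d}\int_{\mR^d}\e^{-|\xi|^\alpha}\dif\xi=(2\pi)^{-d}\,\omega_{d-1}\,\frac{\Gamma(d/\alpha)}{\alpha}.
\]
This quantity is bounded uniformly for $\alpha\ge\alpha_0$, so it handles $|x|\le 1$.

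For $|x|>1$ and $\alpha\in[\alpha_0,2)$ we use the explicit bound \eqref{eq:KL01}. Its constant
\[
\frac{d4^\alpha\Gamma(\frac{d+\alpha}{2})}{2(1-2^{-d})\pi^{d/2}(1-\e^{-1})}
\]
is continuous in $\alpha$ on $[0,2]$, hence bounded by a constant depending only on $d$. This gives $p^{(\alpha)}(1,x)\le c|x|^{-d-\alpha}$ with $c$ independent of $\alpha$; since the constant in \eqref{eq:KL01} does not blow up, uniformity is automatic here. The one point to double-check is that \eqref{eq:KL01} really holds for every $\alpha\in(0,2)$ with that constant, as stated from \cite{BGR14}. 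If one preferred a self-contained argument, one could instead use the subordination formula \eqref{eq:KL03}: split the $u$-integral at $u=|x|^2$, bound $p^{(2)}(u,x)\le (4\pi u)^{-d/2}$ when $u\ge |x|^2$, and use \eqref{eq:PP02} together with the moment formula \eqref{eq:SS} when $u<|x|^2$. In that route the tail $\mathbb P(S_1^{(\alpha)}>|x|^2)$ must be handled with constants uniform in $\alpha$, and this is the delicate step, since the moment constants $\Gamma(1-2\vartheta/\alpha)$ degenerate as $\vartheta\uparrow\alpha/2$. Citing \eqref{eq:KL01} avoids this.

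For $\alpha=2$, the Gaussian \eqref{eq:PP04} gives, via \eqref{eq:PP02} with $n=(d+2)/2$,
\[
p^{(2)}(1,x)\le (4\pi)^{-d/2}\e^{-|x|^2/4}\le c_d|x|^{-d-2}.
\]
Combining the two regimes and taking the maximum of the constants yields \eqref{eq:NH01-1}.
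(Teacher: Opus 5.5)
Your proof is correct and follows essentially the paper's argument. The off-diagonal bound comes from the $\alpha$-uniform constant in \eqref{eq:KL01}, the Gaussian case from \eqref{eq:PP02}, and your Fourier on-diagonal bound is exactly the refinement in the remark after the paper's proof. The paper's main proof instead gets the on-diagonal bound from the negative moment $\mE[(S_t^{(\alpha)})^{-d/2}]$ via \eqref{eq:SS}, which is equally uniform in $\alpha$, so your reduction to $t=1$ by scaling changes nothing essential.
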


\begin{proof}
{\bf (Step 1)} We first claim that there is a constant $c=c(d)>0$ such that
\begin{align}\label{eq:KL02}
p^{(2)}(t,x)\leq c \varrho_2(t,x).
\end{align}
Indeed,  from \eqref{eq:PP02}, picking $c =4t$, and taking $n=0$ and $n=(d+2)/2$ in turn, we get the desired estimate \eqref{eq:KL02} by \eqref{eq:PP04}.

\medskip\noindent
{\bf (Step 2)} By the subordination representation \eqref{eq:KL03}, it is easy to see that
\begin{align*}
p^{(\alpha)} (t,x)
\lesssim  \int_0^\infty
u^{-\frac{d}{2}}\eta_\alpha(t,u)\dif u = \mE [(S_t^{(\alpha)})^{-\frac{d}{2}}]
\overset{\eqref{eq:SS}}{ =} \frac{\Gamma(1+ d/\alpha)}{\Gamma(1+d/2)}  t ^{-d/\alpha},
\end{align*}
which,  combined with \eqref{eq:KL01} and \eqref{eq:KL02}, deduces that there is a constant $c=c(d,\alpha_0) >1$ such that for any $\alpha \in [\alpha_0,2]$,
\begin{align}p^{(\alpha)} (t,x)\leq c \left(t^{-d/\alpha} \wedge\frac{t }{|x|^{d+ \alpha}}\right)\overset{\eqref{eq:ZG05}}{=}c \varrho_\alpha(t,x),
\label{eq:BB02}
\end{align}
where we used the continuity of Gamma functions on $(0,\infty)$.
\end{proof}

\br
The proof above can be refined by using the fact that $p^{(\alpha)}(t,x)\leq p^{(\alpha)}(t,0)$. By \eqref{eq:hp01}, for each $t>0$, one sees that
\begin{align*}
p^{(\alpha)}(t,x)
 \leq (2\pi)^{-d} \int_{\mR^d} \e^{-t|\xi|^\alpha} \dif \xi
= p^{(\alpha)}(t,0)\overset{\eqref{eq:SS01}}{=} t^{-d/\alpha} p^{(\alpha)}(1,0),
\end{align*}
where
\begin{align*}
p^{(\alpha)}(1,0) &  = (2\pi)^{-d} \int_{\mR^d} \e^{-|\xi|^\alpha} \dif \xi
=  (2\pi)^{-d} \omega_{d-1}\int_0^\infty \e^{-r^\alpha} r^{d-1} \dif r\\
&  = (2\pi)^{-d} \frac{\omega_{d-1}}{\alpha} \int_0^\infty \e^{-u} u^{d/\alpha-1} \dif u\\
& \overset{\eqref{eq:Gamma}}{=} (2\pi)^{-d}\omega_{d-1}  \frac{\Gamma(d/\alpha)}{\alpha}.
\end{align*}
The item $ \frac{\Gamma(d/\alpha)}{\alpha} $ is controlled by some $c_{d,\alpha_0}$ since the continuity of Gamma functions on $(0,\infty)$.
\er

We also need the following lemma, which will be applied to prove Corollary \ref{cor:ER01}.

\bl\label{lem:EE02}
Assume that $T>0$ and $\alpha \in [\alpha_0,2]$ with some $\alpha_0\in (0,2)$. There is a constant $c >0$ depending only on $d,T, \alpha_0$ such that, for any $t \in( 0 ,T]$,
\begin{align*}
p^{(2)}(t,x) \leq c t^{\frac{\alpha-2}{2}}\varrho_\alpha(t,x).
\end{align*}
\el

\begin{proof} 
From \eqref{eq:PP02}, picking $c =4t$, and taking $n=0$ and $n=(d+\alpha)/2$ in turn, we get  that
 \begin{align*}
p^{(2)}(t,x)& = (4\pi t)^{-\frac{d}{2}}\e^{-\frac{|x|^2}{4t}}\leq (4\pi t)^{-\frac{d}{2}}  \( 1 \wedge
 \[ (4t \tfrac{d+\alpha}{2})^{\frac{d+\alpha}{2}}|x|^{-d-\alpha}\] \)
 \lesssim t^{-\frac{d}{2}} \wedge \frac{t^{\alpha/2}}{|x|^{d+\alpha}},
 \end{align*}
 which implies that
\begin{align*}
p^{(2)}(t,x) \lesssim (t^{\frac{d}{\alpha} - \frac{d}{2}}+t^{\frac{\alpha-2}{2}}) \varrho_\alpha(t,x)
 \lesssim t^{\frac{\alpha-2}{2}}\varrho_\alpha(t,x).
\end{align*}
The proof is finished.
\end{proof}

\section{Heat kernels: uniform estimates and $\alpha$-continuity}\label{sec:heat-s}

Let $\sigma: \mR_+ \to \mR^d \times \mR^d$ be a measurable $d \times d$-matrix-valued function satisfying the non-degeneracy condition: there is a constant $\kappa_1>1$ such that
\begin{align}\label{eq:XM01}
\kappa_1^{-1} |\xi|^2 \leq |\sigma (r) \xi|^2 \leq \kappa_1 |\xi|^2, \ \ \forall r\in \mR_+, \xi \in \mR^d.
\end{align}
In this section, we consider the following time-inhomogeneous Markov process:
\begin{align}\label{eq:XM00}
Z_{s,t}^{(\alpha),\sigma} = \int_s^t \sigma (r) \dif L_r^{(\alpha)},\end{align}
where $(L_t^{(\alpha)})_{t \geq 0}$  is an $\alpha$-stable process with $\alpha \in (0,2]$. It is well-known (see eg.~\cite{CZ18a}, or Lemma 2.8 of~\cite{MZ22}) that for any $0 \leq s< t < \infty$, the random variable $Z_{s,t}^{(\alpha),\sigma}$ has a smooth density $p^{(\alpha),\sigma} (s,t,x)$
satisfies the scaling property
\begin{align}
\label{eq:SS01-X}
p^{(\alpha),\sigma} (s,t,x) = (t-s)^{-d/\alpha} p^{(\alpha),\widetilde \sigma} (0,1,(t-s)^{-1/\alpha}x),
\end{align}
where
$$
\widetilde \sigma (r) := \sigma (s+r(t-s)).
$$
Notice that condition \eqref{eq:XM01} still holds for $\widetilde \sigma $. When $s=0$, we use a simpler notation $p^{(\alpha),\sigma} (t,x)$ if there is no risk of confusion. Moreover, the infinitesimal generators are given by
\begin{align}\label{eq:XM100}
& \quad \widetilde{\sL}_r^{(\alpha),\sigma} f(x) \nonumber\\
& :=\begin{cases}
\int_{\mR^d} \(f(x+ \sigma(r) z) - f(x) - \sigma(r)z\1_{|z|\leq 1}\cdot \nabla f(x)\) \nu^{(\alpha)} (\dif z), & \ \ \text{when}\ \ \alpha \in (0,2),\\
{\rm TR} \((\sigma \sigma^*)(r) {\rm H}(f)(x)\) ,& \ \ \text{when}\ \ \alpha=2,
\end{cases}
\end{align}
where $f \in C_b^2$, and    {\rm TR} denotes the trace of matrix, and {\rm H} is the Hessian matrix. It is well-known (see eg. Section 2 of~\cite{CZ18a}) that
\begin{align}\label{eq:RR02-X}
\p_s p^{(\alpha),\sigma}_{s,t} (x,y) + \widetilde{\sL}_{s}^{(\alpha),\sigma} p_{s,t}^{(\alpha),\sigma} (x,y) =0, ~   \p_t p^{(\alpha),\sigma}_{s,t} (x,y) - (\widetilde{\sL}_{t}^{(\alpha),\sigma})^* p_{s,t}^{(\alpha),\sigma} (x,y) =0, ~ \text{for } s<t,
\end{align}
with
\begin{align}\label{eq:RR03-X}
\lim_{s \uparrow t}p_{s,t}^{(\alpha),\sigma} (\cdot,y) = \delta_y (\cdot),\quad \lim_{t \downarrow s}p_{s,t}^{(\alpha),\sigma} (x,\cdot) = \delta_x (\cdot),
\end{align}
where the limit is taken in the weak sense, and $\delta_x$ is the Dirac measure at $x$.

\subsection{Uniform heat kernel estimates for non-local and local}\label{sec:XX00}

In this subsection, we establish density estimates for the random variable $Z_t^{(\alpha),\sigma}$ defined by \eqref{eq:XM00}, uniformly in $\alpha$. More precisely, we prove the following theorem, which corresponds to Lemmas 2.8 \& 2.10 of~\cite{MZ22}, and Lemma 2.1 \& Corollary 2.2 of~\cite{WH-SPA}.

\bt[Uniform estimates]
\label{thm:WE01}
Assume that $\alpha \in [\alpha_0,2]$ with some $\alpha_0\in (0,2)$, and $ j\in\mN_0$. For any $0\leq s < \infty$ and $t>0$, there is a constant $c >0$ depending only on $d,j, \alpha_0,\kappa_1$ such that
\begin{align}\label{eq:NH01}
 |\nabla^j p^{(\alpha),\sigma}(s,s+t,x)|\leq c ~     \varrho^{(j)}_\alpha(t,x),
\end{align}
and
\begin{align}\label{eq:NH02}
\begin{split}
& \quad |\nabla^j p^{(\alpha),\sigma}(s,s+t,x)-\nabla^j p^{(\alpha),\sigma}(s,s+t,\bar x)| \\
& \leq c
 ((t^{-1/\alpha} |x-\bar x|)\wedge 1) \(\varrho^{(j)}_\alpha(t,x) + \varrho^{(j)}_\alpha(t,\bar x)\);
\end{split}
\end{align}
and  for $j=0,1$ and $\alpha_0=j$,
\begin{align}\label{eq:NH03}
|\Delta^{\alpha/2} \nabla^j p^{(\alpha),\sigma}(s,s+t,x)| \leq c   t^{-1-j/\alpha} \varrho_{\alpha}(t,x),
\end{align}
where we use the convention $\Delta^{2/2} = \Delta$.
\et

\br
Notice that Eq. \eqref{eq:NH03} is included here only for completeness and is not used in this work.
\er

To prove the theorem, we first recall a representation of the heat kernel $p^{(\alpha),\sigma}$ from~\cite{MZ22}. Fix a c\`adl\`ag path $\ell_r$. Define
$$
A_t^{\ell} := \int_0^t (\sigma \sigma^*) (r) \dif \ell_r .
$$
From the non-degeneracy condition \eqref{eq:XM01}, one sees that the Gaussian random variable,
$$
W^{\sigma,\ell}_t : = \int_0^t \sigma (r) \dif W_{\ell_r},
$$
has a density given by
\begin{align}\label{eq:XM01-20241123}
g^{\sigma,\ell} (t,x) & : = \frac{ \exp \left\{-\frac{\< (A_t^{\ell})^{-1} x, x\>}{4}\right \}}{(4 \pi )^{d/2} \sqrt{\det (A_t^{\ell})}}. 
\end{align}
When $\ell_t=t$, we use $g^{\sigma} (t,x)$ denotes $g^{\sigma,\ell} (t,x)$ for simplicity. Noting that by \eqref{eq:XM01},
$$
\< (A_t^{\ell})^{-1} x, x\> \asymp \frac{|x|^2}{\ell_t},\ \ \text{and }\det ((A_t^{\ell})^{-1}) \asymp \ell_t^{-d},
$$
where the implicit constants only depend on  $d,\kappa_1$,  we have that
\begin{align}\label{eq:XM06}
g^{\sigma,\ell}  (t,x) \asymp (4 \pi \ell_t )^{-d/2} \exp \left\{-\frac{ |x|^2}{4\ell_t}\right \} = p^{(2)} (\ell_t,x),
\end{align}
where $p^{(2)}(t,x)$ is the Gaussian density defined by \eqref{eq:PP04}.

 Observe that, when $\ell_t = t$, $(W_{\ell_t})_{t \geq 0}$ is the Brownian motion, and then the density of $Z_t^{(2),\sigma}\overset{(d)}{=}  \int_0^t \sigma (r) \dif W_r $ defined by \eqref{eq:XM00} satisfies
\begin{align}\label{eq:XM03}
p^{(2),\sigma}(t,x)= g^{\sigma} (t,x)\overset{\eqref{eq:XM06}}{\asymp} p^{(2)}(t,x);
\end{align}
when $\alpha \in (0,2)$, taking $\ell$ be an $\alpha/2$-stable subordinator $S^{(\alpha)}$ independent of $W$, by the calculations above and the subordination representation \eqref{eq:KL03}, one sees that
the density of $Z_t^{(\alpha),\sigma} \overset{(d)}{=} W_t^{\sigma,S^{(\alpha)}}$ is given by
\begin{align}\label{eq:XM03-1}
p^{(\alpha),\sigma} (t,x) = \mE g^{\sigma,S^{(\alpha)}} (t,x).
\end{align}
Hence, by \eqref{eq:XM06},
$$
p^{(\alpha),\sigma} (t,x) \asymp \mE p^{(2)} (S_t^{(\alpha)},x) \overset{\eqref{eq:KL03}}{=} p^{(\alpha)} (t,x).
$$

\begin{lemma}\label{lem:EE01}
For $j \in \mN_0$, there is a constant $c=c(d,j,\kappa_1)>0$ such that
\begin{align}\label{eq:GG01}
|\nabla^j  p^{(2),\sigma}(t,x)|  \leq c   (t^{-1/2}\wedge |x|^{-1})^{j} p^{(2)}(2t,x),
\end{align}
and
\begin{align*}%\label{eq:GG02}
|\nabla^j p^{(2),\sigma} (t,x) - \nabla^j p^{(2),\sigma} (t,\bar x)| \leq c   t^{-j/2}   \( (t^{-1/2}|x-\bar x|)\wedge 1 \) \(p^{(2)}(4t,x) + p^{(2)}(4t,\bar x)\).
\end{align*}
\end{lemma}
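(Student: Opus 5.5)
The plan is to work directly with the explicit Gaussian form \eqref{eq:XM01-20241123} of $g^{\sigma}=p^{(2),\sigma}$, with $\ell_t=t$. Write $A:=A_t=\int_0^t(\sigma\sigma^*)(r)\dif r$. By \eqref{eq:XM01}, $A$ is symmetric and $\kappa_1^{-1}t\,\mI_{d\times d}\le A\le \kappa_1 t\,\mI_{d\times d}$, so $\|A^{-1}\|\lesssim t^{-1}$, $\langle A^{-1}x,x\rangle\ge (\kappa_1 t)^{-1}|x|^2$ and $\det A\asymp t^d$. Differentiating $\exp\{-\langle A^{-1}x,x\rangle/4\}$ $j$ times, I will show by induction that
$$\nabla^j g^{\sigma}(t,x)=P_j(A^{-1},x)\,g^{\sigma}(t,x),$$
where $P_j$ is a finite sum of products of entries of $A^{-1}$ and of $A^{-1}x$. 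Each term contains $m$ factors of $A^{-1}x$ and $(j-m)/2$ factors of $A^{-1}$, with $m\equiv j \pmod 2$. This is the Hermite-polynomial structure. It gives
$$|\nabla^j g^\sigma(t,x)|\lesssim \sum_{m} t^{-(j-m)/2}\,(|x|/t)^m\, g^\sigma(t,x)\lesssim t^{-j/2}\big(1+|x|^2/t\big)^{j/2} g^\sigma(t,x).$$

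Next I absorb the polynomial factor into the exponential by sacrificing part of the decay rate. Using \eqref{eq:PP02} in the form $(|x|^2/t)^{n}\e^{-|x|^2/(8\kappa_1 t)}\le c_n$, I get
$$t^{-j/2}(1+|x|^2/t)^{j/2}\e^{-|x|^2/(4\kappa_1 t)}\lesssim t^{-j/2}\e^{-|x|^2/(8\kappa_1 t)}.$$
Independently, the factor $t^{-j/2}$ can be traded for $|x|^{-j}$: since $t^{-j/2}=|x|^{-j}(|x|^2/t)^{j/2}$, the same absorption gives a bound by $|x|^{-j}$ times a Gaussian. Together these yield the prefactor $(t^{-1/2}\wedge|x|^{-1})^j$. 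The remaining Gaussian is $t^{-d/2}\e^{-|x|^2/(c\kappa_1 t)}$, and I need to compare it with $p^{(2)}(2t,x)=(8\pi t)^{-d/2}\e^{-|x|^2/(8t)}$.

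\textbf{Main obstacle.} When $\kappa_1>1$ the exponent $|x|^2/(c\kappa_1 t)$ is weaker than $|x|^2/(8t)$, so this comparison does not hold literally. There are two options. The first is to read \eqref{eq:GG01}, as \eqref{eq:XM06} implicitly does, with $p^{(2)}(2t,x)$ replaced by $p^{(2)}(c' t,x)$ for a constant $c'=c'(\kappa_1)$; this is harmless for every later use, because only $\varrho_2$-type bounds via \eqref{eq:KL02} are needed. The second is to note that the statement's constants are being tracked only for the normalised case. In either reading, the prefactor of $t^{-d/2}$ is bounded by a constant depending on $d,j,\kappa_1$, which completes \eqref{eq:GG01}.

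For the H\"older-type bound I split into two cases.

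\emph{Case $|x-\bar x|\ge t^{1/2}$.} The factor $\big((t^{-1/2}|x-\bar x|)\wedge 1\big)$ equals $1$. The triangle inequality together with \eqref{eq:GG01} and $t^{-1/2}\wedge|x|^{-1}\le t^{-1/2}$ gives the bound by $t^{-j/2}\big(p^{(2)}(2t,x)+p^{(2)}(2t,\bar x)\big)$. Finally $p^{(2)}(2t,\cdot)\le 2^{d/2}p^{(2)}(4t,\cdot)$.

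\emph{Case $|x-\bar x|< t^{1/2}$.} I use the mean value theorem:
$$|\nabla^j g^\sigma(t,x)-\nabla^j g^\sigma(t,\bar x)|\le |x-\bar x|\sup_{\theta\in[0,1]}|\nabla^{j+1}g^\sigma(t,x+\theta(\bar x-x))|.$$
By \eqref{eq:GG01} with $j+1$, this is at most $|x-\bar x|\,t^{-(j+1)/2}\,p^{(2)}(2t,x_\theta)$, where $x_\theta:=x+\theta(\bar x-x)$. Since $|x_\theta-x|\le t^{1/2}$, \eqref{eq:IU01-XM} gives $p^{(2)}(2t,x_\theta)\le 2^{d/2}\e^{1/8}p^{(2)}(4t,x)$. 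Hence the difference is bounded by $t^{-j/2}(t^{-1/2}|x-\bar x|)\,p^{(2)}(4t,x)$, which is the claimed estimate.
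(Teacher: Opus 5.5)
Your argument follows the paper's route. You use the polynomial-times-Gaussian structure of $\nabla^j g^\sigma$, absorbed into the exponential, for \eqref{eq:GG01}. For the second bound you use the mean value theorem with \eqref{eq:IU01-XM} when $|x-\bar x|$ is small and \eqref{eq:GG01} itself otherwise. Working at general $t$ instead of scaling to $t=1$ makes no difference.

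The obstacle you flag is genuine, and it is a gap in the paper's proof, not in yours. The paper passes from $g^\sigma(1,x)$ to $p^{(2)}(1,x)$ via \eqref{eq:XM03}/\eqref{eq:XM06}. But $\langle A^{-1}x,x\rangle\asymp|x|^2/t$ does not give $g^\sigma\asymp p^{(2)}(t,\cdot)$. Concretely, $\sigma\equiv\sqrt{\kappa_1}\,\mI_{d\times d}$ satisfies \eqref{eq:XM01} and gives $p^{(2),\sigma}(t,x)=p^{(2)}(\kappa_1 t,x)$. Its ratio to $p^{(2)}(2t,x)$ is $(2/\kappa_1)^{d/2}\exp\{\tfrac{|x|^2}{t}(\tfrac18-\tfrac1{4\kappa_1})\}$, which is unbounded for $\kappa_1>2$. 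Since $\kappa_1>1$ is arbitrary, the lemma as written is false.

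Your first reading is the correct fix. With $c'=2\kappa_1$, your argument proves \eqref{eq:GG01} with $p^{(2)}(2\kappa_1 t,\cdot)$ and the second bound with $p^{(2)}(4\kappa_1 t,\cdot)$. You do need to carry $c'$ through your case analysis: in the mean value step, \eqref{eq:IU01-XM} then gives the factor $\e^{1/(8\kappa_1)}$ rather than $\e^{1/8}$. The later uses (Theorem \ref{thm:WE01} and Corollary \ref{cor:ER01}) only need some $p^{(2)}(ct,\cdot)$, so they survive.

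Drop your vague second option. Also, your claim that the comparison fails for every $\kappa_1>1$ is an artefact of losing a factor $2$ in the exponent. If you lose only $1+\delta$, the literal statement holds for $\kappa_1<2$. It fails in general only for $\kappa_1\ge 2$: at $\kappa_1=2$ it already fails for $j\ge 1$.
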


\begin{proof}
By the scaling properties \eqref{eq:SS01} and \eqref{eq:SS01-X}, it suffices to show it for $t=1$.

\medskip\noindent
{\bf (i)} Recalling \eqref{eq:XM01-20241123}, one sees  that
\begin{align*}
|\nabla_x^j p^{(2),\sigma} (1,x)| & \overset{\eqref{eq:XM03}}{=} |\nabla_x^j g^{\sigma} (1,x)| \lesssim_{d,j,\kappa_1} \sum_{k=0}^j |x|^k g^{\sigma} (1,x)\\
& \overset{\eqref{eq:XM03}}{\lesssim}\sum_{k=0}^j |x|^{k} p^{(2)}(1,x) \overset{\eqref{eq:PP04}}{ = } 2^{d/2} p^{(2)}(2,x) \left( \e^{-\frac{|x|^2}{8}}\sum_{k=0}^j |x|^{k} \right)
\\
& \overset{\eqref{eq:PP02}}{\lesssim}  (1\wedge |x|^{-1})^j p^{(2)}(2,x),
\end{align*}
which gives \eqref{eq:GG01}.

\medskip\noindent
{\bf (ii)} For the second inequality, observe that
\begin{align*}\nabla^j p^{(2),\sigma}(1,x)-\nabla^j p^{(2),\sigma}(1,\bar x) = (x-\bar x) \cdot \int_0^1 \nabla^{j+1} p^{(2),\sigma} (1, x + \vartheta (\bar x - x) )\dif \vartheta.
\end{align*}
For $|\bar x - x| \leq1$ and $\vartheta \in [0,1]$, by \eqref{eq:GG01} and  \eqref{eq:IU01-XM}
 , one sees that
\begin{align*}
|\nabla^{j+1} p^{(2),\sigma}| (1, x + \vartheta (\bar x - x) )& \lesssim   (1\wedge |x+\vartheta (\bar x - x)|^{-1})^{j+1} p^{(2)}(2,x + \vartheta (\bar x - x) )\\ &
\lesssim
 p^{(2)}(4,x) \exp\left\{\frac{|\vartheta (\bar x - x)|^2}{4}\right\}
 \lesssim p^{(2)}(4,x),
\end{align*}
which, by the symmetry of $x$ and $\bar x$,  derives that
\begin{align*}
|\nabla^j p^{(2),\sigma}(1,x)-\nabla^j p^{(2),\sigma}(1,\bar x)| \lesssim |x-\bar x| \(p^{(2)}(4,x) \wedge p^{(2)}(4,\bar x)\)
 .
\end{align*}
Hence, combining with \eqref{eq:GG01}, we get the desired estimate.

The proof is complete.
\end{proof}

We are now in a position to give

\begin{proof}[Proof of Theorem \ref{thm:WE01}]
Due to the scaling property \eqref{eq:SS01-X}, we only need to prove this lemma for the case of $s=0$ and $t=1$. In the following, we prove these three inequalities in turn.

\medskip\noindent
{\bf (i)} When $\alpha =2$, by \eqref{eq:GG01} and \eqref{eq:NH01-1}, one sees that
\begin{align*}
|\nabla^j  p^{(2),\sigma}(1,x)|  \lesssim  (1\wedge |x|^{-1})^{j} p^{(2)}(2,x)\lesssim  (1\wedge |x|^{-1})^{j} \varrho_2(2,x) \overset{\eqref{eq:HY00}}{\lesssim} \varrho^{(j)}_2(1,x) .
\end{align*}
Next, define set
$$
J:= \{ k_1,k_2 \in \mN_0 \mid 0\leq k_1\leq j, 0\leq k_2\leq j, \text{ and } 2k_1 - k_2 =j \},
$$
and let $p^{(\alpha)}_{d+k}(t,\tilde x)$ (with $ 0< \alpha < 2$) be the $\alpha$-stable heat kernel in dimension $d+k$ at time $t$ and point $\tilde x \in \mR^{d+k}$.
For any $x \in \mR^d$, letting $\tilde x_k\in \mR^{d+k}$ such that $|\tilde x_k|=|x|$, by \eqref{eq:XM03-1}, and \eqref{eq:XM01-20241123}, and \eqref{eq:XM06}, and definitions, we get that for any $\alpha \in (0,2)$,
\begin{align*}
|\nabla^j p^{(\alpha),\sigma}(1,x) | & = \mE |\nabla^j g^{\sigma,S^{(\alpha)}} (1,x)|  \lesssim  \sum_{(k_1,k_2)\in  J} |x|^{k_2}
 \mE \[ (S_1^{(\alpha)})^{-k_1} p^{(2)}(S_1^{(\alpha)},x)\]\\
& =   \sum_{(k_1,k_2)\in  J}  |x|^{k_2}
\int_{0}^\infty u^{-k_1}   p^{(2)}(u,x) \eta_\alpha (1,u) \dif u\\
&  \lesssim   \sum_{(k_1,k_2)\in  J}  |x|^{k_2}
 \int_{0}^\infty  p^{(2)}_{d+2k_1}(u,\tilde x_{2k_1}) \eta_\alpha (1,u) \dif u \\
& \overset{\eqref{eq:KL03}}{=} \sum_{(k_1,k_2)\in J}  |x|^{k_2}p^{(\alpha)}_{d+2k_1} (1,\tilde x_{2k_1}),
\end{align*}
which, together with \eqref{eq:NH01-1}, implies that for any $\alpha \in [\alpha_0,2)$,
\begin{align*}
|\nabla^j p^{(\alpha),\sigma}(1,x) | & \lesssim\sum_{(k_1,k_2)\in J}  |x|^{k_2}\varrho_\alpha^{(2k_1 )} (1,x) \overset{\eqref{eq:HY00}}{\lesssim}    \varrho^{(j)}_\alpha (1,x).
\end{align*}
Estimate \eqref{eq:NH01} follows.

\medskip\noindent
{\bf (ii)} For the second inequality, observe that
\begin{align}\label{eq:HG01}
\nabla^j p^{(\alpha),\sigma}(1,x)-\nabla^j p^{(\alpha),\sigma}(1,\bar x) = (x-\bar x) \cdot \int_0^1 \nabla^{j+1} p^{(\alpha),\sigma} (1, x + \vartheta (\bar x - x) )\dif \vartheta.
\end{align}
For $|\bar x - x| \leq1$ and $\vartheta \in [0,1]$, by \eqref{eq:NH01} and \eqref{eq:GH01}, one sees that for any $\alpha\in [\alpha_0,2]$,
\begin{align*}
|\nabla^{j+1} p^{(\alpha)}| (1, x + \vartheta (\bar x - x) )& \lesssim   \varrho_\alpha^{(j+1)}(1,x + \vartheta (\bar x - x) )\\ &
\leq 2^{3(d+j+1+\alpha)} \varrho_\alpha^{(j+1)} (1,x) \lesssim \varrho_\alpha^{(j+1)} (1,x),
\end{align*}
which, by the symmetry of $x$ and $\bar x$,  derives that
\begin{align*}
|\nabla^j p^{(\alpha),\sigma}(1,x)-\nabla^j p^{(\alpha),\sigma}(1,\bar x)|
&  \lesssim  |x-\bar x| \left(
\varrho^{(j+1)}_\alpha(1,x) \wedge \varrho^{(j+1)}_\alpha(1,\bar x) \right) \\
& \overset{\eqref{eq:HY00}}{ \lesssim }
 |x-\bar x|
\left( \varrho^{(j)}_\alpha(1,x) \wedge \varrho^{(j)}_\alpha(1,\bar x) \right) .
\end{align*}
Hence, combining with \eqref{eq:NH01}, one sees that
\begin{align*}
|\nabla^j p^{(\alpha),\sigma} (1,x) - \nabla^j p^{(\alpha),\sigma} (1,\bar x)| \lesssim    (1\wedge |x-\bar x|) \(\varrho^{(j)}_\alpha(1,x) + \varrho^{(j)}_\alpha(1,\bar x)\),
\end{align*}
which is exactly the desired estimate \eqref{eq:NH02}.

\medskip\noindent
{\bf (iii)} When $\alpha =2$, \eqref{eq:NH03} follows from \eqref{eq:NH01} and \eqref{eq:HY00}.  As for $\alpha <2$, similar with the proof of~(2.34) in Lemma 2.10 from \cite{MZ22}, one sees that
\begin{align*}
|\Delta^{\alpha/2} \nabla^jp^{(\alpha),\sigma}(1,x)| \lesssim \int_{\mR^d}
\Big(\varrho^{(j)}_\alpha(1,x+z)+\varrho^{(j)}_\alpha(1,x-z)  +\varrho^{(j)}_\alpha(1,x) \Big) (|z|^2\wedge 1) \nu^{(\alpha)}(\dif z).
\end{align*}
Moreover, by \eqref{eq:JH01} and \eqref{eq:JH02}, we have that there is a constant $c_{\alpha_0}$ independent of $\alpha$ such that
$$
\int_{\mR^d} (|z|^2\wedge 1) \nu^{(\alpha)}(\dif z) \leq c_{\alpha_0},
$$
which together with \eqref{eq:GH01} derives that
\begin{align*}
 \int_{\mR^d} \varrho^{(j)}_\alpha(1,x+z)(|z|^2\wedge 1) & \nu^{(\alpha)}(\dif z)
=  \left( \int_{|z|\leq 1\vee \frac{|x|}{2}} + \int_{|z|> 1\vee \frac{|x|}{2}} \right) \varrho^{(j)}_\alpha(1,x+z)(|z|^2\wedge 1) \nu^{(\alpha)}(\dif z) \\
 \lesssim &\, \varrho^{(j)}_\alpha(1,x) \int_{\mR^d}  (|z|^2\wedge 1) \nu^{(\alpha)}(\dif z)  + \int_{|z|> 1\vee \frac{|x|}{2}}  \varrho^{(j)}_\alpha(1,x+z) \nu^{(\alpha)}(\dif z) \\
 \lesssim
 &\, \varrho^{(j)}_\alpha(1,x) +   (1\wedge |x|^{-d-\alpha})  \int_{|z|> 1\vee \frac{|x|}{2}}  \varrho^{(j)}_\alpha(1,x+z)  \dif z \\
  \overset{\eqref{eq:HY00}}{\lesssim } & \varrho_\alpha(1,x)  \left( 1+  \int_{\mR^d} \varrho_\alpha(1,x+z)  \dif z  \right)  \overset{\eqref{eq:LK01}}{\lesssim}\varrho_\alpha(1,x).
\end{align*}
Combining the calculations above, we get the estimate \eqref{eq:NH03}.

The proof is complete.
\end{proof}

From Lemmas \ref{lem:EE01} and \ref{lem:EE02}, we directly have the following corollary, which will be used in the proof of  Lemma \ref{lem:XM80}.

\bc\label{cor:ER01}
Assume that $T>0$ and $\alpha \in [\alpha_0,2]$ with some $\alpha_0\in (0,2)$. For $j \in \mN_0$, there is a constant $c >0$ depending only on $d,T, \alpha_0,j,\kappa_1$ such that, for any $t \in( 0 ,T]$,
\begin{align*}
|\nabla^j  p^{(2),\sigma}(t,x)|  \leq c   (t^{-1/2}\wedge |x|^{-1})^{j} t^{\frac{\alpha-2}{\alpha}}\varrho_\alpha(t,x),
\end{align*}
and
\begin{align*}
|\nabla^j p^{(2),\sigma} (t,x) - \nabla^j p^{(2),\sigma} (t,\bar x)| \leq c    \( (t^{-1/2}|x-\bar x|)\wedge 1 \)t^{\frac{\alpha-2}{\alpha}-\frac{j}{2}}     \( \varrho_\alpha(t,x) + \varrho_\alpha(t,\bar x)\).
\end{align*}
\ec

\subsection{$\alpha$-stability estimates of infinitesimal generators}

Recalling \eqref{eq:XM100}, we define
\begin{align}\label{eq:XM00-20241207}
\widetilde{\mathscr{D}}^{(\alpha),\sigma}_t := \widetilde{\sL}^{(\alpha),\sigma}_t
-\widetilde{\sL}^{(2),\sigma}_t.
\end{align}

The following lemma is analogous to Lemma 2.7 of~\cite{Liu22a}. In this work, we do not use this result directly, but instead follow its proof to establish Lemma \ref{lem:XM80}.

\bl\label{lem-2.4}
 Assume that $\alpha \in [\alpha_0,2)$ with some $\alpha_0 \in (1,2)$. Then for every $f \in C_b^3(\mR^d)$, we have that there is a constant $c = c(d,\alpha_0, \kappa_1)>0$ such that for any $t>0$,
$$
\left | \widetilde{\sD}^{(\alpha),\sigma}_t f(x)\right| \leq c  (2-\alpha)
\left(|\widetilde{\sL}^{(2),\sigma}_t  f(x)|+\|
\sigma(t)\|^3\|\nabla^3 f\|_\infty+
\[\|f\|_\infty\wedge (\|f\|_{\mathrm{Lip}}\|
\sigma(t)\| )\]   \right).
$$
In particular,
$$
\left |\( \Delta^{\alpha/2} -  \Delta\) f(x)\right| \leq c (2-\alpha) \|f\|_{C_b^3},
$$
where $\|f\|_{C_b^3}=  \Sigma_{i=0}^3\|\nabla^i f\|_\infty$.
\el

\begin{proof}
Using the definition of operators, we have  that for any $\epsilon>0$,
\begin{align*}
 \widetilde{\sD}^{(\alpha),\sigma}_t  f(x)
 =&\left( \int_{|z|\leq \epsilon} \[ f(x+\sigma(t) z) - f(x) - \sigma(t) z \cdot \nabla f(x)\] \nu^{(\alpha)} (\dif z) -  \widetilde{\sL}^{(2),\sigma}_t  f(x) \right) \nonumber\\
& + \int_{|z|>\epsilon} \[ f(x+\sigma(t)z) - f(x) \] \nu^{(\alpha)} (\dif z)\nonumber \\
:= & I_1+I_2.
\end{align*}
 For the term $I_1$, adopting the second-order Taylor formula
$$
f(x+z)-f(x)-z\cdot \nabla f(x)=\int_0^1\dif r \int_0^r  \nabla^2 f (x+sz) z \cdot z \dif s,
$$
where $ \nabla^2 f$ is the Hessian matrix of $f$, one sees that
\begin{align*}
I_1 = &  \int_{ |z|\leq \epsilon}\left[\int_0^1\dif r \int_0^r
 \nabla^2 f(x+s \sigma(t)z) (\sigma(t) z) \cdot  \sigma(t) z \dif s\right] \nu^{(\alpha)}(\dif z)-
 \widetilde{\sL}^{(2),\sigma}_t  f(x)  \\
= & \frac{  1}{d}\int_0^1\dif r \int_0^r \int_{ |z|\leq \epsilon}
\widetilde{\sL}^{(2),\sigma}_t  f(x) |z|^2 \nu^{(\alpha)}(\dif z)\dif s
 - \widetilde{\sL}^{(2),\sigma}_t  f(x)\\
 &+  \int_{ |z|\leq \epsilon} \left[ \int_0^1\dif r \int_0^r
\left[\nabla^2 f(x+s \sigma(t)z)-\nabla^2 f(x)\right] (\sigma(t) z)\cdot  \sigma(t)z \dif s \right]\nu^{(\alpha)}(\dif z)  \\
=: &I_{11}+I_{12},
\end{align*}
where  we used the facts,
$$ \int_{ |z|\leq \epsilon} z_i z_j
\nu^{(\alpha)}(\dif z)= \delta_{ij} \frac{1}{d} \int_{ |z|\leq \epsilon}|z|^2
\nu^{(\alpha)}(\dif z)
$$
with $\delta_{ij} = \begin{cases}0, & i \neq j,\\
1,& i =j,
\end{cases}$ being the Kronecker delta, and
\begin{align*}
& \int_{ |z|\leq \epsilon}\left[\int_0^1\dif r \int_0^r  \nabla^2  f(x) (\sigma(t) z) \cdot \sigma(t) z \dif
s\right] \nu^{(\alpha)}(\dif z) \\
= &
\frac{1}{d}\int_0^1\dif r \int_0^r \int_{
|z|\leq \epsilon} \widetilde{\sL}^{(2),\sigma}_t  f(x) |z|^2 \nu^{(\alpha)}(\dif z)\dif s.
\end{align*}
In the following, take $\epsilon=1$ for simplicity. For $I_{11}$, using \eqref{eq:JH01} and \eqref{C-d-alpha}, we obtain that
\begin{align*}
|I_{11}| =  \left| \frac{\omega_{d-1}\cC(d,\alpha)}{ d (2-\alpha)}
\epsilon^{2-\alpha}-1 \right| |\widetilde{\sL}^{(2),\sigma}_t  f(x)|
 \lesssim_d (2-\alpha) |\widetilde{\sL}^{(2),\sigma}_t  f(x)|.
\end{align*}
For $I_{12}$, by \eqref{eq:XM01}, we have
\begin{align*}
|I_{12}|
&
\leq  \int_{ |z|\leq \epsilon}\left|\int_0^1\dif r \int_0^r
 \left[\nabla^2 f(x+s \sigma(t)z)-\nabla^2 f(x)\right] (\sigma(t) z) \cdot \sigma(t) z \dif s\right| \nu^{(\alpha)}(\dif z)\\
 & \lesssim \| \sigma\sigma^*(t)\|^\frac{3}{2}\|\nabla^3 f\|_\infty \int_{ |z|\leq \epsilon} |z|^3 \nu^{(\alpha)}(\dif z)\\
& \lesssim_d
 (2-\alpha)\| \sigma(t)\|^3\|\nabla^3 f\|_\infty,
\end{align*}
where  the last inequality is due to \eqref{eq:JH01}.
For the term $I_2$, by \eqref{eq:JH02}, it is easy to see that
\begin{align*}
|I_2|&  \lesssim \int_{ |z|>\epsilon} \[\|f\|_\infty \wedge (\|f\|_{\mathrm{Lip}}\|
\sigma(t)\|  |z| ) \]\nu^{(\alpha)}(\dif
z)\\
&   \lesssim_{d,\alpha_0} (2-\alpha) \[\|f\|_\infty \wedge (\|f\|_{\mathrm{Lip}}\|
\sigma(t)\|)\] .
\end{align*}
Combining the calculations above,  we finish the proof.
\end{proof}

The following result will be used in the proofs of Corollary \ref{lem:MM01} and Corollary \ref{lem:OI01}.

\bl\label{lem:XM80}
Assume that $T>0$ and $\sigma_1,\sigma$ satisfy the condition \eqref{eq:XM01}. If $\alpha \in [\alpha_0,2)$ with some $  \alpha_0 \in (0,2)$,
then there is a constant $c=c(d,T,\kappa_1,\alpha_0)>0$ such that for
$ 0\leq s< r < t \leq T$,
\begin{align*}
 | \widetilde{\mathscr{D}}^{(\alpha),\sigma_1}_r  p^{(\gamma),\sigma} | (s,t,y) &\leq c (2-\alpha)   (1+  |\ln (t-s)| )(|t-s|^{-1} + |t-s|^{-\frac{2}{\alpha}} ) \varrho_{\alpha} (t-s, y) ,
\end{align*}
where $\gamma\in \{ \alpha,2\}$,  and $\widetilde{\mathscr{D}}^{(\alpha),\sigma}_r$ is defined by \eqref{eq:XM00-20241207}.
\el

\begin{proof} 
For the simplicity of notations, we drop the time variable $r$. Define the first-order difference operator $\delta_z$ by
$$
\delta_{z} h(y) := h(y+z) -h(y).
$$
Following the proof of Lemma \ref{lem-2.4}, a similar argument derives that
\begin{align*}
\widetilde{\sD}^{(\alpha),\sigma_1} f(y) =&\left(\frac{1}{d}\int_{ |z|\leq
\varepsilon}\left[\int_0^1\dif \theta \int_0^\theta  \widetilde{\sL}^{(2),\sigma_1} f(y)|z|^2  \dif
\vartheta\right] \nu^{(\alpha)}(\dif z) -  \widetilde{\mathscr{L}}^{(2),\sigma_1}  f(y) \right) \nonumber\\
&+  \int_{ |z|\leq \varepsilon} \int_0^1\dif \theta \int_0^\theta
\left[\delta_{\vartheta \sigma_1  z} \nabla^2 f(y) \right] (\sigma_1 z)\cdot  \sigma_1  z\dif \vartheta \nu^{(\alpha)}(\dif z)  \\
& + \int_{|z|>\varepsilon} \[ \delta_{\sigma_1 z}f(y)\] \nu^{(\alpha)}
(\dif z)\\
=: & \,   \Lambda^{(1)}_{\alpha} (s,t,y) + \Lambda^{(2)}_{\alpha} (s,t,y)+\Lambda^{(3)}_{\alpha}  (s,t,y).
\end{align*}
In the following, we take $\varepsilon=  |t-s|^{1/\alpha} >0$, and estimate these three terms in turn.

\medskip
\noindent
{\bf (Part 1)} We prove the result for the case of  $f(y) =  p^{(\alpha),\sigma}_{s,t}(y)$ in this part.

\medskip
\noindent(i)  Under the condition \eqref{eq:XM01}, one sees that
\begin{align*}
| \Lambda^{(1)}_{\alpha} (s,t,y)
 |& \overset{\eqref{eq:JH01}}{=}   \xi^{(\alpha)}_{s,t} \times  | \widetilde{\sL}^{(2),\sigma_1}   p^{(\alpha),\sigma}_{s,t}(y)|\\
&\,\overset{\eqref{eq:NH01} }{\lesssim}  \xi_{s,t}^{(\alpha)} \varrho^{(2)}_{\alpha} (t-s,y) \overset{\eqref{eq:HY00} }{\lesssim}  \xi_{s,t}^{(\alpha)}  |t-s|^{-\frac{2}{\alpha}}\varrho_{\alpha} (t-s,y).
\end{align*}
where
\begin{align}
\xi_{s,t}^{(\alpha)} &
 := \left | (t-s)^{\frac{2-\alpha}{\alpha}} \frac{ \omega_{d-1} \cC(d,\alpha)}{d (2-\alpha)} -1 \right|
\nonumber\\
&\overset{\eqref{C-d-alpha}}{ \lesssim} \left| (t-s)^{\frac{2-\alpha}{\alpha}}  - 1\right| + (2-\alpha)
\nonumber\\
&\ \  \lesssim (2-\alpha) (1+|\ln (t-s)|)(|t-s|^\frac{2-\alpha}{\alpha}+ 1)\label{eq:YE00}
\end{align}
since $|t^\eta-1| \leq |\ln t |~ |\eta|~ (t^\eta+1)$ for $t>0$.

\noindent(ii)  Observe that, when $|z|\leq |t-s|^{1/\alpha}$, under the condition \eqref{eq:XM01}, we have
\begin{align*}
 \left| \delta_{\vartheta \sigma_1 z}\nabla^{2}  p^{(\alpha),\sigma}_{s,t} (y)\right| & \overset{\eqref{eq:NH02}}{\lesssim} |t-s|^{-1/\alpha} | \vartheta \sigma_1 z| \( \varrho^{(2)}_\alpha(t-s,y+\vartheta \sigma_1 z) +  \varrho^{(2)}_\alpha(t-s,y)\)\\
 &  \overset{\eqref{eq:GH01}}{\lesssim}  |t-s|^{-1/\alpha} | z| \varrho^{(2)}_\alpha(t-s,y)\\
 &\overset{\eqref{eq:HY00} }{\lesssim} |t-s|^{-3/\alpha} | z| \varrho_\alpha(t-s,y)
\end{align*}
Thus, we have
\begin{align*}
|\Lambda^{(2)}_{\alpha} (s,t,y)|&  \lesssim   |t-s|^{-3/\alpha}
\varrho_{\alpha} (t-s,y)\left( \int_{ |z|\leq \varepsilon}
  |z|^3 \nu^{(\alpha)}(\dif z)\right)\\
  & \overset{\eqref{eq:JH01} }{\lesssim} (2-\alpha)   |t-s|^{-1} \varrho_{\alpha}(t-s,y)
\end{align*}

\noindent(iii) Under the condition \eqref{eq:XM01},  by \eqref{eq:NH02}, we have
$$
|\delta_{\sigma_1 z}p^{(\alpha),\sigma}_{s,t}(y)|\lesssim
  \varrho_{\alpha}(t-s, y+\sigma_1 z)+  \varrho_{\alpha}(t-s, y).
$$
Thus, we obtain that
\begin{align*}
\Lambda^{(3)}_{\alpha}(s,t, y& )   \leq  \left( \int_{ |z|>|t-s|^{1/\alpha}\vee \frac{|y|}{2\kappa_1}} + \int_{ \frac{|y|}{ 2\kappa_1}
\geq |z|>|t-s|^{1/\alpha}} \right)
 \varrho_{\alpha}(t-s, y+\sigma_1 z) \nu^{(\alpha)}(\dif z)\\
 &\qquad + \varrho_{\alpha}(t-s, y)
\int_{ |z|>|t-s|^{1/\alpha}}
\nu^{(\alpha)}(\dif z)\\
&\overset{\eqref{eq:GH01}}{\lesssim}   \int_{
|z|>|t-s|^{1/\alpha}\vee\frac{|y|}{2\kappa_1}}\varrho_{\alpha}(t-s,
y+\sigma_1 z)\nu^{(\alpha)}(\dif z) \\
&\qquad   + 2   \varrho_{\alpha}(t-s,y) \int_{
|z|>|t-s|^{1/\alpha}}    \nu^{(\alpha)}(\dif z)
\\
&\,\, \lesssim (2-\alpha)
|t-s|^{-1}  \varrho_{\alpha}(t-s,y),
\end{align*}
where we used Lemma \ref{lem:ZX01} and \eqref{eq:JH02} in the last inequality. Combining the above calculations and observing \eqref{eq:YE00}, the case of $\gamma=\alpha$ follows.

\medskip\noindent
{\bf (Part II)} In this part, we give the proof of the case of $f(y) =  p^{(2),\sigma}_{s,t}(y)$. The proof is similar to that in {\bf Part I}.

\medskip
\noindent (i) Under the condition \eqref{eq:XM01}, by  Corollary \ref{cor:ER01}, one sees that
\begin{align*}
 | \widetilde{\sL}^{(2),\sigma_1}   p^{(2),\sigma}_{s,t}(y)|  \lesssim  |t-s|^{-1}  |t-s|^{\frac{\alpha-2}{\alpha}}\varrho_\alpha(t-s,x) = |t-s|^{-\frac{2}{\alpha}}\varrho_\alpha(t-s,x) ,
\end{align*}
which derives that
\begin{align*}
| \Lambda^{(1)}_{\alpha} (s,t,y)|  \lesssim \xi_{s,t}^{(\alpha)}  |t-s|^{-\frac{2}{\alpha}}\varrho_\alpha(t-s,x) .
\end{align*}

\noindent(ii) Observe that,  by the conditions \eqref{eq:XM01} and $ |z|\leq |t-s|^{1/\alpha}$, using Corollary \ref{cor:ER01} and \eqref{eq:GH01}, we get
\begin{align*}
 \left| \delta_{\vartheta \sigma_1 z}\nabla^{2}  p^{(2),\sigma}_{s,t} (y)\right|
 & \lesssim |t-s|^{-1/2}  | \vartheta \sigma_1 z| \times    |t-s|^{ -\frac{2}{\alpha}}\varrho_\alpha(t-s,y)\\
 & \lesssim   |t-s|^{-\frac{3}{\alpha}}  | z|   \varrho_\alpha(t-s,y),
\end{align*}
which implies that
\begin{align*}
|\Lambda^{(2)}_{\alpha} (s,t,y)|\lesssim   (2-\alpha)  |t-s|^{-1} \varrho_\alpha(t-s,y).
\end{align*}

\noindent(iii) Under the condition \eqref{eq:XM01},   by Corollary \ref{cor:ER01}, we have
$$
| \delta_{\sigma_1 z}p^{(2),\sigma}_{s,t}(y)|\lesssim |t-s|^{\frac{\alpha-2}{\alpha}}  \( \varrho_\alpha (t-s, y+\sigma_1 z)+\varrho_\alpha (t-s,  y)\).
$$
Thus,
\begin{align*}
\Lambda^{(3)}_{\alpha}(s,t, y& )   \lesssim (2-\alpha) |t-s|^{\frac{\alpha-2}{\alpha}}
|t-s|^{-1}  \varrho_{\alpha}(t-s,y)\\
& \, = (2-\alpha) |t-s|^{-\frac{2}{\alpha}} \varrho_{\alpha}(t-s,y).
\end{align*}
Combining the above calculations and observing \eqref{eq:YE00}, we get the result for $\gamma=2$.

The proof is finished.
\end{proof}

\br 
Following the proof of the lemma above, one sees that: Assume that $\sigma_1,\sigma$ satisfy the condition \eqref{eq:XM01}. If $\alpha \in [\alpha_0,2)$ with some $ \alpha_0 \in (0,2)$,
then there is a constant $  c=c(d,\kappa_1,\alpha_0)>0$ such that  for
$0 \leq s < r< t< +\infty $,
\begin{align*}
 |\widetilde{\mathscr{D}}^{(\alpha),\sigma_1}_r  p^{(\alpha),\sigma} | (s,t,y) &\leq c (2-\alpha)   (1+  |\ln (t-s)| )(|t-s|^{-1}+ |t-s|^{-\frac{2}{\alpha}}) \varrho_{\alpha} (t-s, y) ,
\end{align*}
where  $\widetilde{\mathscr{D}}^{(\alpha),\sigma}_r$ is defined by \eqref{eq:XM00-20241207}.
\er

\bc\label{lem:MM01}
Fix $T>0$. Assume that $\sigma_1,\sigma_2,\sigma$ satisfy the condition \eqref{eq:XM01}. Let $\gamma_1 \in [\alpha_0,2]$ and $\gamma_2 \in [\alpha_0,2)$ with some $ \alpha_0\in (0,2)$. There is a constant $c>0$ depending only on $d,T,\alpha_0,\kappa_1$ such that for all $0\leq s < t   \leq T$,
\begin{align*}
&\quad  \int_{(t+s)/2}^t \int_{\mR^d}  |  p^{(\gamma_1), \sigma_1}  |(r,t,x-y)
| \widetilde{\mathscr{D}}^{(\gamma_2),\sigma}_r p^{(\gamma),\sigma_2}  | (s,r,y) \dif y\dif r\\
& \leq   c (2-\gamma_2)(1+|\ln (t-s) |)
(|t-s|^{-1} +|t-s|^{-\frac{2}{\gamma_2}} )
\sum_{i,j\in\{1,2\}}\varrho_{\gamma_i,\gamma_j} (t-s,x)
\end{align*}
where $ \gamma= \{ \gamma_2,2\}$ and $\widetilde{\mathscr{D}}^{(\gamma_2),\sigma}_r$ is defined by \eqref{eq:XM00-20241207}.
\ec

\begin{proof}
By Lemma \ref{lem:XM80}, one sees that
\begin{align*}
& \qquad  | (\widetilde{\sL}^{(\gamma_2),\sigma}_r  -\widetilde{\sL}^{(2),\sigma}_r)   p^{(\gamma),\sigma_2} (s,r,y) | \\
& \lesssim  (2-\gamma_2)  (1+  |\ln (r-s)| )(|r-s|^{-1}+ |r-s|^{-\frac{2}{\gamma_2}}  )\varrho_{\gamma_2}(r-s, y)  ,
\end{align*}
which, together with  \eqref{eq:NH01} and Lemma \ref{lem-2.4.2}, implies that when $r \in [(t+s)/2,t]$,
\begin{align*}
 &   \int_{\mR^d} | p^{(\gamma_1),\sigma_1}|(r,t,x-y)|(\widetilde{\sL}^{(\gamma_2),\sigma}_r-\widetilde{\sL}^{(2),\sigma}_r)  p^{(\gamma),\sigma_2} | (s,r,y) \dif y \\
   \lesssim  & ~ (2-\gamma_2)  (1+  |\ln (r-s)| )(|r-s|^{-1}+ |r-s|^{-\frac{2}{\gamma_2}} )  \int_{\mR^d}  \varrho_{\gamma_1} (t-r,x-y) \varrho_{\gamma_2} (r-s,y) \dif y\\
\lesssim & ~ (2-\gamma_2)  (1+  |\ln (t-s)| )(|t-s|^{-1}+  |t-s|^{-\frac{2}{\gamma_2}})
 \sum_{i,j \in \{1,2\}}
\varrho_{\gamma_i, \gamma_j}  (t-s,x),
\end{align*}
where we used  \eqref{eq:LK01} in the last inequality. The desired estimate follows.
\end{proof}

\br 
Following the proof of the corollary above, one sees that: Assume that $\sigma_1,\sigma_2,\sigma$ satisfy the condition \eqref{eq:XM01}. Let $\gamma_1 \in [\alpha_0,2]$ and $\gamma_2 \in [\alpha_0,2)$ with some $ \alpha_0\in (0,2)$. There is a constant $c>0$ depending only on $d,\alpha_0,\kappa_1$ such that for all $0\leq s < t < \infty$,
\begin{align*}
&\quad  \int_{(t+s)/2}^t \int_{\mR^d}  |  p^{(\gamma_1), \sigma_1}  |(r,t,x-y)
| \widetilde{\mathscr{D}}^{(\gamma_2),\sigma}_r p^{(\gamma_2), \sigma_2}  | (s,r,y) \dif y\dif r\\
& \leq   c (2-\gamma_2)(1+|\ln (t-s) |)\left(|t-s|^{-1}
+|t-s|^{-\frac{2}{\gamma_2}} \right)
\sum_{i,j\in\{1,2\}}\varrho_{\gamma_i,\gamma_j} (t-s,x)
\end{align*}
where   $\widetilde{\mathscr{D}}^{(\gamma_2),\sigma}_r$ is defined by \eqref{eq:XM00-20241207}.

\er

\subsection{$\alpha$-continuity of heat kernels}

In this subsection we aim to prove the following result about the $\alpha$-continuity of $p^{(\alpha),\sigma}(s,s+t,x)$, which is crucial in this paper.

\begin{theorem}[$\alpha$-continuity]\label{thm:AA01}
Assume that $\alpha \in [\alpha_0,2]$ with some $  \alpha_0\in (0,2)$. Then there is a constant $ c=c(d,T,\alpha_0,\kappa_1)>0$ such that such that for any $ 0\leq s< s+t\leq T$,

\noindent
{\bf (i)} we have that
\begin{align}\label{eq:RT01}
\begin{split}
& |  p^{(\alpha),\sigma}(s,s+t,x)-   p^{(2),\sigma}(s,s+t,x)|\\
 \leq c &\, (2-\alpha)  (1+|\ln t|) (1+
t^{\frac{\alpha-2}{\alpha}})
\sum_{\gamma_1,\gamma_2\in\{\alpha,2\}}\varrho_{\gamma_1,\gamma_2}
(t,x);
\end{split}
\end{align}
{\bf (ii)}  in particular, supposing that
\begin{align}\label{eq:XM02-20241123}
(\sigma\sigma^{\tau})(r) =\begin{bmatrix}
\lambda^{(1)}_r &     &  \\
   &\ddots &  \\
   &   & \lambda^{(d)}_r
\end{bmatrix}_{d\times d} ,
\end{align}
one sees that, for $k=1,2$,
\begin{align}\label{eq:RT01-2}
\begin{split}
&  | \nabla^k p^{(\alpha),\sigma}(s,s+t,x)-   \nabla^k p^{(2),\sigma}(s,s+t,x)|\\
\leq c &\, (2-\alpha) (1+|\ln t|)(1+
t^{\frac{\alpha-2}{\alpha}})\sum_{k_1=1}^k
\sum_{\gamma_1,\gamma_2\in\{\alpha,2\}}|x|^{2k_1 -k} \varrho^{(2k_1)}_{\gamma_1,\gamma_2}
(t,x).
 \end{split}
\end{align}
\end{theorem}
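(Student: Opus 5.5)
We will use a Duhamel (variation-of-constants) identity that links the two kernels through the difference operator $\widetilde{\sD}^{(\alpha),\sigma}_r$, and then estimate it with Lemma \ref{lem:XM80} and Corollary \ref{lem:MM01}. The scaling \eqref{eq:SS01-X} is not used to normalise $t=1$ here, because the factor $(1+|\ln t|)$ in \eqref{eq:RT01} tracks the genuine $t$-dependence. We work directly on $[s,s+t]$ with $t\le T$.

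\textbf{Step 1: Duhamel identity.} Write $u_r:=p^{(\alpha),\sigma}(s,r,\cdot)$ and $v_r:=p^{(2),\sigma}(r,s+t,\cdot)$. The backward/forward Kolmogorov equations \eqref{eq:RR02-X} and the initial/terminal conditions \eqref{eq:RR03-X} give
\begin{align*}
p^{(\alpha),\sigma}(s,s+t,x)-p^{(2),\sigma}(s,s+t,x)
=\int_s^{s+t}\int_{\mR^d} p^{(2),\sigma}(r,s+t,x-y)\,\widetilde{\sD}^{(\alpha),\sigma}_r p^{(\alpha),\sigma}(s,r,y)\,\dif y\,\dif r .
\end{align*}
This follows by differentiating $r\mapsto\int v_r(x-y)u_r(y)\dif y$ and using the symmetry of the (translation invariant, $z\mapsto -z$ symmetric) generators, as in \eqref{eq:RR02}. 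The same identity holds with the roles of $\alpha$ and $2$ exchanged, i.e.\ with $p^{(\alpha),\sigma}(r,s+t,\cdot)$ outside and $\widetilde{\sD}^{(\alpha),\sigma}_r p^{(2),\sigma}(s,r,\cdot)$ inside.

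\textbf{Step 2: split the time integral at the midpoint.} On $r\in[s+t/2,s+t]$ we use the first form of the identity. Corollary \ref{lem:MM01} with $\gamma_1=2$, $\gamma_2=\alpha$ and $\gamma=\alpha$ then bounds this piece by
\begin{align*}
(2-\alpha)(1+|\ln t|)\bigl(t^{-1}+t^{-2/\alpha}\bigr)\sum_{\gamma_i,\gamma_j}\varrho_{\gamma_i,\gamma_j}(t,x).
\end{align*}
On $r\in[s,s+t/2]$ the operator would fall on a kernel with small time increment, which is too singular. So we move $\widetilde{\sD}^{(\alpha),\sigma}_r$ onto the other factor, using its symmetry and translation invariance in the convolution variable. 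The kernel $p^{(2),\sigma}(r,s+t,\cdot)$ has time increment $\ge t/2$, so Lemma \ref{lem:XM80} (case $\gamma=2$) applies. The remaining spatial convolution with $|p^{(\alpha),\sigma}|(s,r,\cdot)\lesssim\varrho_\alpha$ (by \eqref{eq:NH01}) is handled by Lemma \ref{lem-2.4.2} and \eqref{eq:LK01}, exactly as in the proof of Corollary \ref{lem:MM01}. Integrating the factor $(2-\alpha)(1+|\ln t|)(t^{-1}+t^{-2/\alpha})$ over an interval of length $t$ yields the prefactor $(2-\alpha)(1+|\ln t|)(1+t^{(\alpha-2)/\alpha})$. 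This gives \eqref{eq:RT01}.

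\textbf{Main obstacle.} The hardest point is justifying Step 1 and the transfer of the operator in Step 2 at the level of pointwise kernels. This requires enough integrability of $\widetilde{\sD}$ applied to the kernels near $r=s$ and $r=s+t$ to pass to the limit and to interchange integrals. We would first try to regularise by working on $[s+\epsilon,s+t-\epsilon]$. The bounds of Lemma \ref{lem:XM80} are integrable away from the endpoints, and the boundary terms converge by \eqref{eq:RR03-X} together with the uniform bounds \eqref{eq:NH01}--\eqref{eq:NH02}.

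\textbf{Part (ii).} Under \eqref{eq:XM02-20241123} the Gaussian $g^{\sigma,\ell}$ factorises coordinatewise. We would differentiate the Duhamel identity in $x$, putting $\nabla^k$ on whichever factor has the larger time increment. The diagonal structure makes $\nabla^k p^{(\gamma),\sigma}$ a finite sum of terms $x^{m}\,\cdot\,$(kernels of the same type in dimension $d+2k_1$), as in the proof of Theorem \ref{thm:WE01}(i). Applying part (i) to each such term and using \eqref{eq:HY00} yields the sum $\sum_{k_1}|x|^{2k_1-k}\varrho^{(2k_1)}_{\gamma_1,\gamma_2}$.
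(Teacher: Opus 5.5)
Your proposal follows the paper's proof. In (i) you use the same Duhamel identity split at the midpoint, with $\widetilde{\sD}^{(\alpha),\sigma}_r$ placed on whichever kernel has time increment at least $t/2$; the paper does the transfer on $[s,s+t/2]$ by a time-reversal change of variables so it can quote Corollary~\ref{lem:MM01}. You then use Lemma~\ref{lem:XM80}, Lemma~\ref{lem-2.4.2} and \eqref{eq:LK01}, and the prefactor $t\,(t^{-1}+t^{-2/\alpha})$ should come from integrating the pointwise-in-$r$ bound inside the proof of Corollary~\ref{lem:MM01} over an interval of length $t/2$, not from its stated conclusion, which already contains the $r$-integral.

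For (ii) the paper likewise uses the exact identity $\partial_{x_i}g^{\sigma,\ell}(x)=-2\pi x_i\,g^{\bar\sigma,\ell}(x,0,0)$, with $\bar\sigma\bar\sigma^{\tau}=\mathrm{diag}(\sigma\sigma^{\tau},\lambda^{(i)},\lambda^{(i)})$, whose coefficient does not depend on the subordinator path. This gives $\partial_{x_i}(p^{(\alpha),\sigma}-p^{(2),\sigma})(x)=-2\pi x_i\,(p^{(\alpha),\bar\sigma}-p^{(2),\bar\sigma})(x,0,0)$, so (i) applies in dimension $d+2$ (and $d+4$ for $k=2$). You should drop the step of differentiating the Duhamel identity: it is unnecessary, and if bounded through absolute values it cannot reproduce the factor $|x|^{2k_1-k}$, for example the vanishing at $x=0$ when $k=1$.
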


\begin{remark}\label{rem:AA01}
Note that for any $T>0$, there is a constant $c=c(T)>0$ such that for any $\varepsilon \in (0,\alpha)$,
\begin{align*}
1+|\ln t | \leq c \alpha \varepsilon^{-1} {t^{-\varepsilon/\alpha}}.
\end{align*}
Hence, from Theorem \ref{thm:AA01}, one sees that \eqref{eq:RT01} derives that
\begin{align*}
 |  p^{(\alpha),\sigma} -   p^{(2),\sigma}| (s,s+t,x)
 \lesssim  \varepsilon^{-1}(2-\alpha)
t^\frac{\alpha-2-\varepsilon}{\alpha}
\sum_{\gamma_1,\gamma_2\in\{\alpha,2\}}\varrho_{\gamma_1,\gamma_2}
(t,x);
\end{align*}
and from \eqref{eq:RT01-2}, we have that for $k=1,2$,
\begin{align*}
& \quad
  | \nabla^k p^{(\alpha),\sigma}(s,s+t,x)-   \nabla^k p^{(2),\sigma}(s,s+t,x)|\\
  & 
\lesssim  \varepsilon^{-1}(2-\alpha)  t^\frac{\alpha-2-\varepsilon}{\alpha}\sum_{k_1=1}^k
\sum_{\gamma_1,\gamma_2\in\{\alpha,2\}}|x|^{2k_1 -k} \varrho^{(2k_1)}_{\gamma_1,\gamma_2}
(t,x).
\end{align*}
\end{remark}

Now we are in a position to give

\begin{proof}[The proof of Theorem \ref{thm:AA01}]

\noindent{\bf (i)} Notice that the densities are even functions. From the defnition \eqref{eq:XM100},  one see that the  following change of variable holds:
\begin{align*}
\widetilde{\sL}_{t}^{(\alpha),\sigma} f(x) & = \int_{\mR^d} \( f(x+z) -f (x) - z \cdot \nabla f(x) \) \frac{\cC(d,\alpha)\dif z}{|\sigma^{-1}(t) z|^{d+\alpha} |\det \sigma (t)|}\\
& =  \int_{\mR^d} \( f(x+z) -f (x) - z \cdot \nabla f(x) \) \kappa (t,z) \nu^{(\alpha)} (\dif z) ,
\end{align*}
where
$$
\kappa (t,z) =  \frac{|z|^{d+\alpha}}{|\sigma^{-1}(t) z|^{d+\alpha} |\det \sigma (t)|}.
$$
Hence, it is easy to check that for each $t$, the operator $\widetilde{\sL}_{t}^{(\alpha),\sigma}$ is symmetric in the sense of
\begin{align*}
\< f, \widetilde{\sL}_{t}^{(\alpha),\sigma}g\> = \< \widetilde{\sL}_{t}^{(\alpha),\sigma} f, g\>,\ \ \text{for }f,g\in \mathscr{S}.
\end{align*}
Hence, following the  proof of Theorem 2.5 in
\cite{CZ16} (or p.32 in~\cite{CZ18b}), since heat kernels
decay at  infinity, and satisfy \eqref{eq:RR03-X} and \eqref{eq:RR02-X}, we have
\begin{align*}
 p_{s,t}^{(\alpha),\sigma}(x) & - p_{s,t}^{(2),\sigma}(x)=\int_s^t \tfrac{\dif}{\dif r} \<p_{s,r}^{(\alpha),\sigma }(\cdot),  p_{r,t}^{(2),\sigma}(x-\cdot)\>\dif r \\
& = \int_s^t  \left(\<\p_r p_{s,r}^{(\alpha),\sigma}(\cdot),  p_{r,t}^{(2),\sigma}(x-\cdot)\> + \<p_{s,r}^{(\alpha),\sigma}(\cdot),  \p_r p_{r,t}^{(2),\sigma }(x-\cdot)\>\right)\dif r \\
&=  \int^t_{(t+s)/2}\< p_{r,t}^{(2),\sigma}(x-\cdot), (\widetilde{\sL}_r^{(\alpha),\sigma}-\widetilde{\sL}_r^{(2),\sigma})  p_{s,r}^{(\alpha),\sigma}(\cdot) \> \dif r \\
&\quad+  \int_s^{(t+s)/2}  \<p_{s,r}^{(\alpha),\sigma}(\cdot) ,
(\widetilde{\sL}_r^{(\alpha),\sigma}-\widetilde{\sL}_r^{(2),\sigma}) p_{r,t}^{(2),\sigma}(x-\cdot) \> \dif r\\
& := I_1 + I_2
\end{align*}
Moreover, notice that, by change of variables and the scaling property \eqref{eq:SS01-X}, we get that
\begin{align*}
I_2 & = \int_{(t+s)/2}^t  \<p_{s,t+s-r}^{(\alpha),\sigma}(\cdot),
(\widetilde{\sL}_{t+s-r}^{(\alpha),\sigma}-\widetilde{\sL}_{t+s-r}^{(2),\sigma})
 p_{t+s-r,t}^{(2),\sigma}(x-\cdot) \> \dif r\\
& = \int_{(t+s)/2}^t  \<
p_{r,t}^{(\alpha),\sigma_2},
(\widetilde{\sL}_{r}^{(\alpha),\widetilde\sigma}-\widetilde{\sL}_{r}^{(2),\widetilde\sigma})
p_{s,r}^{(2),\sigma_1}(x-\cdot)   \> \dif r,
\end{align*}
where
$$ 
\widetilde{\sigma}(u)= \sigma(t+s -u), \quad
\sigma_1(u)=\sigma(t-r+u),\quad \sigma_2(u) = \sigma (s-r+u).
$$
Applying Corollary \ref{lem:MM01} to $I_1,I_2$, \eqref{eq:RT01} follows. 

\medskip\noindent
{\bf (ii)} Observe that, by definition \eqref{eq:XM01-20241123}, since $A_t^\ell = (A_t^\ell)^\tau $, we have
\begin{align*}
\nabla_x g^{\sigma,\ell}  (t,x) = -\frac{(A_t^\ell)^{-1} x}{2} g^{\sigma,\ell}  (t,x) 
.
\end{align*}
which together with the condition \eqref{eq:XM02-20241123} derives that for each $i=1,2,\cdots,d$,
$$
\nabla_{x_i} g^{\sigma,\ell}(s,s+t,x)=-2\pi x_i
g^{\bar{\sigma},\ell}(s,s+t,\tilde{x}),
$$
where $\tilde{x}\in \R^{d+2}$ is such that $|\tilde{x}|=|x|$ and $g^{\bar{\sigma},\ell}(s,s+t,\tilde{x})$ is the probability density of a Gaussian random variable in dimension $d+2$ with $$(\bar{\sigma} \bar{\sigma}^{\tau})(r)  =\begin{bmatrix}
(\sigma \sigma^\tau)_r & & \\
 &\lambda^{(i)}_r &\\
&  &\lambda^{(i)}_r
\end{bmatrix}_{(d+2) \times (d+2)}.
$$
Hence, by the first equation of formula \eqref{eq:XM03}, and \eqref{eq:XM03-1}, we have that
$$
|\nabla_{x_i}p^{(\alpha),\sigma}-
\nabla_{x_i}p^{(2),\sigma}|(s,s+t,x) \leq 2\pi|x_i|
|p^{(\alpha),\bar{\sigma}}-p^{(2),\bar{\sigma}}|(s,s+t,\tilde{x}),
$$
where $p^{(\alpha),\bar{\sigma}} $ is the transition
densities of a semigroup  in dimension $d + 2$. Consequently, thanks to {\bf (i)} or \eqref{eq:RT01}, and the fact
\begin{align*}
|\tilde{x}|\sum_{\gamma_1,\gamma_2\in\{\alpha,2\}}\varrho^{(2)}_{\gamma_1,\gamma_2} (t,|\tilde{x}|)  = |x|\sum_{\gamma_1,\gamma_2\in\{\alpha,2\}}\varrho^{(2)}_{\gamma_1,\gamma_2} (t,x),
\end{align*}
we get the estimate \eqref{eq:RT01-2} for $k=1$. The proof of  the case $k=2$ is similar, so we omit it here.
\end{proof}

%%%%%%%%%%%%%%%%%%%%%%%%%%%%%%%%%%%%%%%%%%%%%
\section{Densities of SDEs: Proof of Theorem \ref{th-1.1}}\label{sec:th1}
%%%%%%%%%%%%%%%%%%%%%%%%%%%%%%%%%%%%%%%%%%%%%

Throughout this section, we always suppose that $\alpha \in [\alpha_0,2]$ with some $\alpha_0 \in (1,2)$. Consider the following SDE:
$$
\dif X^{(\alpha)} _t=b(X^{(\alpha)}_t) \dif t+ \dif L^{(\alpha)}_t, \quad X^{(\alpha)}
_0=x\in \mathbb{R}^d,
%\sL^{(\alpha)}:=\Delta^{\alpha/2} +  b(x)\cdot \nabla,
$$
where $b$ satisfies the condition ($\textbf{H}_b^\beta$). Our aim is to give some estimates for $\hp^{(\alpha)}_{b}$ and $\hp^{(\alpha)}_{b} - \hp^{(2)}_{b}$ (see Theorem \ref{th-1.1}). Notice that the results in this section are proved for SDEs under ($\mathbf{H}_b^\beta$) and
\begin{align*}
\| \nabla^j b \|_\infty < \infty, \quad \text{for}~ j \in \mN_0.
\end{align*}
Similar to Section 5 of~\cite{MZ22}, by a
standard approximation,  we can extend the bounds of the
estimates under the sole assumptions ($\mathbf{H}_b^\beta$).

\subsection{Convolution inequalities}

For $\eta \in \mathbb{R}$ and $\gamma_1,\gamma_2>0$ and $0\leq s<t\leq T$, let
\begin{equation}\label{eq:phi}
\phi^{(\eta)}_{\gamma_1,\gamma_2} (s,t;x,y) :=
(t-s)^{\eta-1}\varrho_{\gamma_1,\gamma_2} (t-s, \theta^{(1)}_{s,t}(x)-y).
\end{equation}
For simplicity, we denote
$$
\phi_{\gamma}^{(\eta)}:=\phi^{(\eta)}_{\gamma,\gamma}.
$$
Moreover, by Lemma \ref{lem:flow}, if
($\mathbf{H}_b^\beta$) holds, then there is a constant $c=c(T,d,\kappa_0)>0$ such that
\begin{align}\label{eq:SF00}
\phi^{(1)}_{\gamma_1,\gamma_2}  (s,t;x,y) \asymp_c
\varrho_{\gamma_1,\gamma_2}(t-s,x- \theta^{(1)}_{t,s}(y))\asymp_c \varrho_{\gamma_1,\gamma_2}(t-s,\theta^{(1)}_{s,r}(x)- \theta^{(1)}_{t,r}(y)).
\end{align}
 By   \eqref{eq:LK01}, one sees that
\begin{equation}\label{eq:LK001}
  \int_{\R^d}\phi^{(\eta)}_{\gamma_1,\gamma_2}
(s,t;x,y)\dif y \lesssim_{d} (1+T)^{\frac{\gamma_2 \vee \gamma_1}{\gamma_1
\wedge \gamma_2}}
(t-s)^{\eta-\frac{\gamma_2 \vee \gamma_1}{\gamma_1
\wedge \gamma_2}} .
\end{equation}

\bl\label{lem:WR01-2}
Fix $T>0$.
Assume that $\gamma_i \in
[\alpha_0,2]$ with some $\alpha_0\in (0,2)$, $i=1,2,3,4$. Then there is a constant $\hat C=c(T,d,\alpha_0, \kappa_0)>1$ such that for any $(s,t;x,y) \in \D_T$ and $r\in (s,t)$,
\begin{align*}
 \(  \phi^{(1)}_{\gamma_1,\gamma_2}\odot \phi^{(1)}_{\gamma_3,\gamma_4} \)_r (s,t; x,y)
 \leq \hat C \left(  |r-s|^{1-\frac{{\gamma_1 \vee\gamma_2}}{\gamma_1 \wedge\gamma_2}}  +|t-r|^{1- \frac{{\gamma_3 \vee\gamma_4}}{\gamma_3 \wedge\gamma_4}} \right)   \sum_{\substack{i=1,3\\ j =2,4}} \phi^{(1)}_{\gamma_i,\gamma_j  }(s,t;x,y).
\end{align*}
Moreover, if $ \eta_1+1- \tfrac{{\gamma_1 \vee\gamma_2}}{\gamma_1 \wedge\gamma_2}>0,\eta_2+1- \tfrac{{\gamma_3 \vee\gamma_4}}{\gamma_3 \wedge\gamma_4}>0$, then there is a constant $\hat C=c(T,d,\alpha_0,\kappa_0)>1$ such that for any $(s,t;x,y) \in \D_T$,
\begin{align*}
 \( \phi^{(\eta_1)} _{\gamma_1,\gamma_2}\otimes \phi^{(\eta_2)}_{\gamma_3,\gamma_4}\)(s,t;x,y)
 & \leq   \hat C
 {\rm B}\(\eta_1+1-\tfrac{{\gamma_1 \vee\gamma_2}}{\gamma_1 \wedge\gamma_2}, \eta_2+1-\tfrac{{\gamma_3 \vee\gamma_4}}{\gamma_3 \wedge\gamma_4}\) \\
  & \times \sum_{\substack{i=1,3\\ j =2,4}}
\phi_{\gamma_i,\gamma_j} ^{(\eta_1+\eta_2+2-\tfrac{{\gamma_1 \vee\gamma_2}}{\gamma_1 \wedge\gamma_2}-\tfrac{{\gamma_3 \vee\gamma_4}}{\gamma_3 \wedge\gamma_4})}  (s,t;x,y).
\end{align*}
\el

\begin{proof}
By the definitions, Lemma~\ref{lem-2.4.2},  \eqref{eq:SF00}, and \eqref{eq:LK001}, it is easy to check that the first inequality is true. Based on this, by \eqref{eq:Beta} and the non-increasing property of  Beta functions, we get the second one.
\end{proof}

\subsection{Parametrix expansion of densities}

To prove Theorem \ref{th-1.1}, following the idea of~\cite{MZ22, MPZ21}, we use the parametrix method to derive the transition density of SDE. Fix $(\tau,\xi) \in[s, t] \times \R^d$. Consider the following
freezing process:
$$
X^{(\alpha,\tau,\xi)}_{s,t} :=x+\int_s^t b(\theta^{(1)}_{\tau,r}  (\xi)
)\dif r+ \int_s^t   \dif
L^{(\alpha)}_r.
$$
By Lemma 2.8 of~\cite{MZ22}, The density of $X^{(\alpha,\tau,\xi)}_{s,t}$  is given by
 $$
 \bar p^{(\alpha,\tau,\xi)} (s,t;x,y) :=p_{s,t}^{(\alpha)} \bigg(x-y+\int_s^t b(\theta^{(1)}_{\tau,r}  (\xi) ) \dif r \bigg),
$$
where $p^{(\alpha)}_{s,t}$ is the density of $
\int_s^t   \dif L^{(\alpha)}_r$.

Define
\begin{align}\label{eq:df-L}
\sL^{(\alpha)} f(x) :=
\begin{cases}
\int_{\mR^d} \(f(x+   z) - f(x) -  z\cdot \nabla f(x)\) \nu^{(\alpha)} (\dif z), & \alpha \in (1,2),\\
\frac{1}{2}  \Delta f (x), & \alpha=2,
\end{cases}
\end{align}
Due to Lemma 3.1 of~\cite{MZ22}, We have the following Duhamel-type
representation formula:
$$
\hp_{b}^{(\alpha)} (s,t;x,y)= \bar p^{(\alpha,\tau,\xi)}
(s,t;x,y) +  \int_s^t \int_{\mR^d} \hp_{b}^{(\alpha)} (s,r;x,z)
\mathfrak{B}_r^{(\tau,\xi)}  \bar p^{(\alpha,\tau,\xi)}
(r,t;\cdot,y)(z) \dif z \dif r,
$$
where
$$
\mathfrak{B}_s^{(\tau,\xi)}f(x):=\left(b(x)-b(\theta^{(1)}_{\tau,s}  (\xi))
\right)\cdot \nabla f(x) .
$$
In particular, taking $(\tau,\xi)=(t,y)$ and defining
\begin{align}\label{eq:DF02}
\hp_0^{(\alpha)}(s,t;x,y):=\bar p^{(\alpha,t,y)} (s,t;x,y) =
p_{s,t}^{(\alpha)}(x-\theta^{(1)}_{t,s}(y)),
\end{align}
and
\begin{align}\label{eq:SX00}
q_0^{(\alpha)}(s,t;x,y) :=\mathfrak{B}_s^{(t,y)}
\hp_0^{(\alpha)} (s,t;\cdot,y)(x) ,
\end{align}
one sees that
\begin{align*}
\hp_{b}^{(\alpha)} (s,t;x,y)
& = \hp_0^{(\alpha)} (s,t;x,y) +
\int_s^t \int_{\mR^d} \hp_{b}^{(\alpha)} (s,r;x,z)
\mathfrak{B}_r^{(t,y)}  \hp_0^{(\alpha)} (r,t;\cdot,y)(z) \dif
z \dif s\\
& = \hp_0^{(\alpha)} (s,t;x,y) +\hp_{b}^{(\alpha)}
\otimes q_0^{(\alpha)}(s,t;x,y).
\end{align*}
Hence, define recursively  for $n \geq 1$,
$$
 q_n^{(\alpha)}:= q_0^{(\alpha)} \otimes q_{n-1}^{(\alpha)}, \quad q^{(\alpha)}:=\sum_{n=0}^\infty q_n^{(\alpha)},
$$
where the series $q^{(\alpha)}$ are absolutely convergent by Lemma 3.3 of~\cite{MZ22}. Moreover, by iteration and Remark 3.4 of~\cite{MZ22}, we obtain that
\begin{align}\label{eq:DF01}
\hp_{b}^{(\alpha)}=\hp_0^{(\alpha)}+\hp_{b}^{(\alpha)}
\otimes q_0^{(\alpha)}=\hp_0^{(\alpha)}+\sum_{n=0}^\infty
\hp_0^{(\alpha)} \otimes q_n^{(\alpha)}=\hp_0^{(\alpha)}+\hp_0^{(\alpha)}
\otimes q^{(\alpha)}.
\end{align}

\subsection{Estimates of $\hp_0^{(\alpha)}(s,t;x,y)$ and  $|\hp_0^{(\alpha)} -\hp_0^{(2)}|(s,t;x,y)$}

The following result is a direct consequence of  Theorem \ref{thm:WE01}.

\begin{lemma}\label{lem:NH01}
Assume that $\alpha \in [\alpha_0,2]$ with some $\alpha_0\in (0,2)$, and $ j\in\mN_0$. Then there is a $c=c(T,\alpha_0,j,d)>1$, such that for any $(s,t;x,y) \in \D_{+\infty}$,
\begin{align*}
 |\nabla^j \hp_0^{(\alpha)}(s,t;\cdot,y)|(x)\leq c   \varrho^{(j)}_\alpha(t-s,x-\theta^{(1)}_{t,s}(y)).
\end{align*}
\end{lemma}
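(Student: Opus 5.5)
By \eqref{eq:DF02} we have $\hp_0^{(\alpha)}(s,t;x,y)=p_{s,t}^{(\alpha)}(x-\theta^{(1)}_{t,s}(y))$. Here $p_{s,t}^{(\alpha)}$ is the density of $\int_s^t \dif L_r^{(\alpha)}$, which is the case $\sigma\equiv\mI_{d\times d}$ of \eqref{eq:XM00}; in particular \eqref{eq:XM01} holds with, say, $\kappa_1=2$. For fixed $y$ the shift $\theta^{(1)}_{t,s}(y)$ does not depend on $x$. Therefore
$$
\nabla^j_x \hp_0^{(\alpha)}(s,t;\cdot,y)(x)=(\nabla^j p^{(\alpha),\mI}_{s,t})(x-\theta^{(1)}_{t,s}(y)).
$$
No chain-rule terms involving the flow arise.

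We then apply \eqref{eq:NH01} of Theorem \ref{thm:WE01} at the point $x-\theta^{(1)}_{t,s}(y)$, with time increment $t-s$. This gives
$$
|\nabla^j p^{(\alpha),\mI}(s,t,z)|\le c\,\varrho^{(j)}_\alpha(t-s,z),
$$
with $c$ depending only on $d,j,\alpha_0$, since $\kappa_1$ is fixed. The bound is uniform in $s$ and $t-s>0$, so it holds on all of $\D_{+\infty}$ and the constant is in fact independent of $T$.

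One point needs checking: in the drift-free case the stable process is time-homogeneous, so $p^{(\alpha),\mI}(s,s+t,\cdot)=p^{(\alpha)}(t,\cdot)$. The claim therefore reduces to Proposition \ref{prop:1.6}(i). The case $\alpha=2$ requires attention to the normalization, since $\sL^{(2)}=\frac12\Delta$ in \eqref{eq:df-L} while \eqref{eq:PP04} corresponds to $\Delta$. This only rescales time by a constant factor, and that factor is absorbed into $c$ by \eqref{eq:FG01}. I expect no genuine obstacle: the lemma is an immediate translation of Theorem \ref{thm:WE01}, and the only care needed is with this constant-factor time change and with noting that the flow enters solely through the spatial shift.
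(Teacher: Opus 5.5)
Your proposal is correct and is exactly what the paper means when it calls the lemma a direct consequence of Theorem \ref{thm:WE01}: since $\hp_0^{(\alpha)}(s,t;\cdot,y)$ is the translate of $p^{(\alpha)}_{s,t}$ by the $x$-independent vector $\theta^{(1)}_{t,s}(y)$, the bound \eqref{eq:NH01} with $\sigma\equiv\mI_{d\times d}$ transfers with no chain-rule terms. Your side remarks are also correct: the resulting constant is in fact independent of $T$, and the $\frac12\Delta$ normalization in \eqref{eq:df-L} would at most rescale time by a fixed factor, which is absorbed into $c$.
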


From this result, we get the following corollary.

\bc\label{cor:TF00-2} Assume that $\alpha \in [\alpha_0,2]$ with
some $\alpha_0\in (1,2)$ and $T>0$. If the condition ($\mathbf{H}_b^\beta$)  holds, then
there is a constant $C_0=c(T,\alpha_0,\Theta)>1$, such that  for any $(s,t;x,y) \in \D_T$,
\begin{align*}
|\mathfrak{B}_s^{(t,y)} \hp_0^{(\alpha)}(s,t;\cdot,y)|(x)\leq C_0
\phi^{(\frac{\alpha+\beta-1}{\alpha})}_{\alpha} (s,t;x,y).
\end{align*}
\ec

\begin{proof}
Recalling the definition \eqref{eq:phi}, by ($\mathbf{H}_b^\beta$) and Lemma \ref{lem:NH01}, for $\beta \in (0,1)$, we have
\begin{align*}
|\mathfrak{B}_s^{(t,y)} \hp_0^{(\alpha)}(s,t;\cdot,y)|(x)& \leq | b(x) - b(\theta^{(1)}_{t,s}(y))| \times | \nabla \hp_0^{(\alpha)}(s,t;\cdot,y)|(x)\\
& \lesssim (|x-\theta^{(1)}_{t,s}(y)|+|x-\theta^{(1)}_{t,s}(y)|^\beta)   \varrho^{(1)}_\alpha(t-s,x-\theta^{(1)}_{t,s}(y))\\
& \overset{\eqref{eq:HY00}}{\lesssim}  \varrho^{(0)}_\alpha(t-s,x-\theta^{(1)}_{t,s}(y)) +  \varrho^{(1-\beta)}_\alpha(t-s,x-\theta^{(1)}_{t,s}(y))\\
&\overset{\eqref{eq:HY00}}{\lesssim}  \phi^{(\frac{\alpha+\beta-1}{\alpha})}_{\alpha} (s,t;x,y),
\end{align*}
where we used the first inequality in \eqref{eq:SF00}. The proof is complete.
\end{proof}

The following lemma is a direct application of Remark \ref{rem:AA01}.

\bl \label{lem:AA01}
Assume that $\alpha \in [\alpha_0,2]$ with some $\alpha_0\in (1,2)$, and $T>0$, and $k=1,2$. Then there is a constant $c=c(T,\alpha_0,k,d)>1$, such that  for any $(s,t;x,y) \in \D_T$ and all $\varepsilon \in (0,\alpha)$,
\begin{align*}
\begin{split}
 &\qquad | \hp_0^{(\alpha)}(s,t;\cdot,y) - \hp_0^{(2)}(s,t;\cdot,y)|(x)\\
 & \leq c\varepsilon^{-1}  (2-\alpha) (t-s)^{ \frac{\alpha-2-\varepsilon}{\alpha}} \sum_{\alpha_1,\alpha_2\in\{\alpha,2\}} \varrho_{\alpha_1,\alpha_2}
(t-s,x-\theta^{(1)}_{t,s}(y)),
 \end{split}
\end{align*}
and
\begin{align*}
\begin{split}
 &\qquad | \nabla^k \hp_0^{(\alpha)}(s,t;\cdot,y) -   \nabla^k \hp_0^{(2)}(s,t;\cdot,y)|(x)\\
 & \leq c\varepsilon^{-1}  (2-\alpha) (t-s)^{ \frac{\alpha-2-\varepsilon}{\alpha}} \sum_{k_1=1}^k
\sum_{\alpha_1,\alpha_2\in\{\alpha,2\}}|x-\theta^{(1)}_{t,s}(y)|^{2k_1 -k} \varrho^{(2k_1)}_{\alpha_1,\alpha_2}
(t-s,x-\theta^{(1)}_{t,s}(y)).
 \end{split}
\end{align*}
\el

Applying this lemma, we have the following corollary.

\bc\label{lem:OI01}
Assume that $\alpha \in [\alpha_0,2]$ with some $\alpha_0\in (1,2)$ and $T>0$. If the condition ($\mathbf{H}_b^\beta$)  holds, then there is a constant $C_1=c(T,\alpha_0,\Theta)>1$ such that  for any $(s,t;x,y) \in \D_T$ and all $\varepsilon \in (0,\alpha)$,
\begin{align*}
|\mathfrak{B}_s^{(t,y)} (\hp_0^{(\alpha)}-\hp_0^{(2)}) (s,t;\cdot,y)|(x)  \leq  C_1 \varepsilon^{-1}  (2-\alpha)
\sum_{\alpha_1,\alpha_2\in\{\alpha,2\}}\phi^{(\frac{1+\beta}{2}+\frac{2\alpha-4-\varepsilon}{\alpha})}_{\alpha_1,\alpha_2}
(s,t;x,y),
\end{align*}
 \ec

\begin{proof}
Thanks to ($\mathbf{H}_b^\beta$), by Lemma \ref{lem:AA01}, we have that
\begin{align*}
|\mathfrak{B}_s^{(t,y)} (\hp_0^{(\alpha)} & -\hp_0^{(2)}) (s,t;\cdot,y)|(x)
  \leq | b(x) - b(\theta^{(1)}_{t,s}(y))| \times |( \nabla \hp_0^{(\alpha)} -\nabla \hp_0^{(2)} ) (s,t;\cdot,y)|(x)\\
& \lesssim  \varepsilon^{-1} (2-\alpha)
(t-s)^{\frac{\alpha-2-\varepsilon}{\alpha}}  (|x-\theta^{(1)}_{t,s}(y)|^2+|x-\theta^{(1)}_{t,s}(y)|^{1+\beta} ) \\
& \qquad \times
\sum_{\alpha_1,\alpha_2\in\{\alpha,2\}}\varrho^{(2)}_{\alpha_1,\alpha_2}
(t-s,x-\theta^{(1)}_{t,s}(y))\\
 & \overset{\eqref{eq:LRW01}}{\lesssim} \varepsilon^{-1} (2-\alpha)
(t-s)^{\frac{1+\beta}{2}+\frac{\alpha-4-\varepsilon}{\alpha}}
 \sum_{\alpha_1,\alpha_2\in\{\alpha,2\}}\varrho_{\alpha_1,\alpha_2}
(t-s,x-\theta^{(1)}_{t,s}(y)),
\end{align*}
which derive the desired estimate since \eqref{eq:phi} and \eqref{eq:SF00}.

The proof is finished.
\end{proof}

\subsection{Estimates of $q^{(\alpha)}(s,t;x,y)$ and $|q^{(\alpha)}-q^{(2)}|(s,t;x,y)$. }

In this subsection, we aim to show Corollary \ref{lem-5.5}. To prove this result, we first prepare some lemmas, and introduce some notations for the later use:
$$
\eta_1 := \frac{1+\beta}{2}+\frac{2(\alpha-2)}{\alpha},\qquad \eta_2 :=\eta_1 +\frac{\alpha-2}{\alpha},\qquad
\eta_i^\varepsilon =\eta_i-\frac\varepsilon\alpha,\quad i=1,2.
$$
Notice that
$$
1 \geq  \eta_0 := \frac{\alpha+\beta-1}{\alpha} \geq \eta_1 \geq \eta_2, \quad \eta_0 >0.
$$

Similar to the proof in Lemma 3.3 of~\cite{MZ22}, it is easy to check that the following lemma holds. We give the proof here for reader's convenience.

\bl\label{lem:YT00}
Assume that $T>0$  and $\alpha \in [\alpha_0,2]$ with some
$\alpha_0\in (1,2)$.
If the condition ($\mathbf{H}_b^\beta$)  holds, then for all $(s,t;x,y) \in
\D_{T}$, and any $n\in \mN_0$,
\begin{align*}
 |q_{n}^{(\alpha)}(s,t;x,y)| \leq C_0^{n+1}  \hat C^{n}  \frac{\(\Gamma(\eta_0)\)^{n+1}}{\Gamma ((n+1)\eta_0)}  \phi^{((n+1)\eta_0)}_{\alpha}(s,t;x,y),
\end{align*}
where $C_0>1$ is the constant in Corollary \ref{cor:TF00-2} and $\hat C>1$ is the constant in Lemma \ref{lem:WR01-2}, and both depend only on $T,\alpha_0,\Theta$.
\el

\begin{proof}
First of all, by Lemma \ref{lem:WR01-2}, we have that for any $n \geq 1$,
\begin{align}\label{eq:HU00}
\left| \phi^{(\eta_0)}_{\alpha}  \otimes \phi^{(n\eta_0)}_{\alpha} \right|(s,t;x,y)
 \leq   \hat C \mathrm{B}\(\eta_0,n\eta_0\)
\phi^{((n+1)\eta_0)}_{\alpha} (s,t;x,y).
\end{align}
For $n=0$, by the definition \eqref{eq:SX00}, using
Corollary \ref{cor:TF00-2}, one sees that
\begin{align*}
|q_0^{(\alpha)}(s,t;x,y)|
\leq \left|  \mathfrak{B}_s^{(t,y)}  \hp_0^{(\alpha)} (s,t;\cdot,y) \right| (x)   \leq C_0  \phi^{(\eta_0)}_{\alpha}  (s,t;x,y),
\end{align*}
which, by \eqref{eq:HU00}, derives that
\begin{align*}
|q_1^{(\alpha)} (s,t;x,y)| \leq C_0^2 \left| \phi^{(\eta_0)}_{\alpha}  \otimes \phi^{(\eta_0)}_{\alpha} \right|(s,t;x,y)
& \leq  C_0^2 \hat C\mathrm{B}\(\eta_0,\eta_0\)
\phi^{(2\eta_0)}_{\alpha} (s,t;x,y).
\end{align*}
By iteration,  and $\mathrm{B}\(\eta_0,n\eta_0\) = \frac{\Gamma (\eta_0) \Gamma( n \eta_0)}{\Gamma((n+1)\eta_0)}$, and \eqref{eq:HU00}, it is easy to check that for any $n \geq 1$,
\begin{align*}
&\quad q_{n+1}^{(\alpha)}(s,t;x,y)
=  \( q_0^{(\alpha)} \otimes q_{n}^{(\alpha)} \) (s,t;x,y)\\
& \leq C_0^{n+2}  \hat C^{n} \frac{\(\Gamma(\eta_0)\)^{n+1}}{\Gamma ((n+1)\eta_0)}  \( \phi^{(\eta_0)}_{\alpha} (s,t;x,y) \) \otimes \( \phi^{((n+1)\eta_0)}_{\alpha}\)(s,t;x,y)\\
& \leq C_0^{n+2}  \hat C^{n+1}  \frac{\(\Gamma(\eta_0)\)^{n+1}}{\Gamma ((n+1)\eta_0)}  {\rm B}(\eta_0,(n+1)\eta_0)
\phi^{((n+2)\eta_0)}_{\alpha}(s,t;x,y)\\
& \overset{\eqref{eq:ZG10}}{=}C_0^{n+2}  \hat C^{n+1}   \frac{\(\Gamma(\eta_0)\)^{n+2}}{\Gamma ((n+2)\eta_0)}  \phi^{((n+2)\eta_0)}_{\alpha}(s,t;x,y).
\end{align*}
The proof is finished.
 \end{proof}

\bl\label{lem:YT01}
Assume that $T>0$  and $\alpha \in [\alpha_0,2]$ with some
$\alpha_0\in (1,2)$. If the condition ($\mathbf{H}_b^\beta$)  holds, then for any $(s,t;x,y) \in \D_T$ and all $\varepsilon \in (0,\alpha)$,
\begin{align}\label{eq:YH00}
|q_0^{(\alpha)} -q_0^{(2)}|(s,t; x,y)  \leq C_1 \varepsilon^{-1} (2-\alpha)
\sum_{\gamma_1,\gamma_2\in\{\alpha,2\}}  \phi^{(\eta_1^\varepsilon )}_{\gamma_1,\gamma_2}
(s,t;x,y),
\end{align}
where $C_1>1$, depending only on $T,\alpha_0,\Theta$, is the same constant as in Corollary \ref{lem:OI01};
and for any $\alpha_0 \in ( \frac{12}{7+\beta} ,2)$ (to ensure $\eta_2>0$) there is a constant $C_2 = c(T,\alpha_0,\Theta)>1$ such that for any $n \geq 1$, and $\varepsilon \in (0,  \alpha\eta_2)$,
$$
|q_n^{(\alpha)} -q_n^{(2)} |(s,t;x,y) \leq C_2^{n+1}\frac{\(\Gamma(\eta_2^\varepsilon)\)^{n+1}}{\Gamma ((n+1)\eta_2^\varepsilon)}
\sum_{\gamma_1,\gamma_2 \in \{ \alpha,2\}} \phi^{(\eta_1^\varepsilon+ n\eta_2^\varepsilon)}_{\gamma_1,\gamma_2} (s,t;x,y).
$$
\el

\begin{proof}
{\bf (i)}  By the definition \eqref{eq:SX00},
\begin{align*}
(q_0^{(\alpha)} - q_0^{(2)})(s,t;x,y)
= \mathfrak{B}_s^{(t,y)}  ( \hp_0^{(\alpha)} -\hp_0^{(2)}
)(s,t;\cdot,y)(x),
\end{align*}
which, combining with Corollary \ref{lem:OI01}, implies the desired result \eqref{eq:YH00}.

\medskip\noindent
{\bf (ii)}  Note that, for any $n\geq 1$,
\begin{align}\label{eq:ZU90}
|q_n^{(\alpha)} -q_n^{(2)}|   & \leq \left | \left(  q_0^{(\alpha)}
-q_0^{(2)} \right)  \otimes q_{n-1}^{(\alpha)}  \right|
+\left |q_0^{(2)} \otimes \left( q_{n-1}^{(\alpha)} - q_{n-1}^{(2)}\right) \right | .
\end{align}

\medskip\noindent
{(Step 1)} First of all, since $1 \geq \eta_0$, following the proof of Lemma \ref{lem:YT00}, it is easy to see that there exist two constants $C_0 ,\hat C >1$, depending only on $d,\alpha_0,\Theta$, such that for any $n \in \mN_0$ and $\gamma \in (0,\eta_0]$,
\begin{align}\label{eq:HU01}
 |q_{n}^{(\alpha)}(s,t;x,y)| \leq (C_0\vee \hat C)^{n+1}    \frac{\(\Gamma(\gamma)\)^{n+1}}{\Gamma ((n+1)\gamma)}  \phi^{((n+1)\gamma)}_{\alpha}(s,t;x,y).
\end{align}
For $ \varepsilon \in (0,\alpha \eta_2)$, we have $ 1 \geq \eta_0 \geq  \eta_1^\varepsilon>0$. Thus, taking $\gamma=\eta_1^\varepsilon$ in \eqref{eq:HU01}, by \eqref{eq:YH00} and Lemma \ref{lem:WR01-2}, for every $n \geq
1$, we have
\begin{align*}
&\quad | (  q_0^{(\alpha)} -q_0^{(2)})  \otimes q_{n-1}^{(\alpha)} | \\
& \leq C_1 (C_0\vee \hat C)^{n}   \varepsilon^{-1}(2-\alpha) \frac{\(\Gamma(\eta_2^\varepsilon)\)^{n }}{\Gamma (n\eta_2^\varepsilon)} \sum_{\gamma_1,\gamma_2 \in \{ \alpha,2\}}
\(\phi^{ (\eta_1^\varepsilon)}_{\gamma_1,\gamma_2}\otimes
\phi^{(n\eta_1^\varepsilon)}_{\alpha} \)   \\
& \leq   4
(1+T)^{\frac{n(2-\alpha)}{\alpha}}
 C_1 (C_0\vee \hat C)^{n+1} \varepsilon^{-1}(2-\alpha) \frac{\(\Gamma(\eta_2^\varepsilon)\)^{n }}{\Gamma (n\eta_2^\varepsilon)}  {\rm B}(\eta_2^\varepsilon,n\eta_1^\varepsilon)
\sum_{\gamma_1,\gamma_2 \in \{ \alpha,2\}}
\phi^{(\eta_1^\varepsilon+n\eta_2^\varepsilon)}_{\gamma_1,\gamma_2}  \\
&\leq C^{n+1} \varepsilon^{-1}(2-\alpha) \frac{\(\Gamma(\eta_2^\varepsilon)\)^{n+1}}{\Gamma ((n+1)\eta_2^\varepsilon)}    \sum_{\gamma_1,\gamma_2 \in \{ \alpha,2\}}
\phi^{(\eta_1^\varepsilon+n\eta_2^\varepsilon)}_{\gamma_1,\gamma_2} ,
\end{align*}
where we used the non-increasing of  Beta functions and \eqref{eq:ZG10}, and the constant
$$
C\geq  4(1+T)^{\frac{2}{\alpha_0}}
 C_1(C_0\vee \hat C) >1
$$
only depends on $T, \alpha_0, \Theta$.

\medskip\noindent
{(Step 2)} Similarly, by the definition \eqref{eq:SX00} and Corollary \ref{cor:TF00-2}, since $1\geq \frac{1+\beta}{2} \geq \eta_0\geq \eta_1^\varepsilon >0$, we have that
\begin{align}\label{eq:HU90}
|q_0^{(2)}(s,t;x,y)|
\leq C_0  \phi^{(\frac{1+\beta}{2})}_{2}  (s,t;x,y)
\leq  (1+T)C_0 \phi^{(\eta_1^\varepsilon)}_{2}  (s,t;x,y).
\end{align}
Hence, since $1\geq \eta_1^\varepsilon \geq \eta_2^\varepsilon>0$, by \eqref{eq:YH00} and  Lemma \ref{lem:WR01-2}, we get
\begin{align*}
\Bigg |q_0^{(2)} \otimes \left( q_0^{(\alpha)}  -q_0 ^{(2)} \right) \Bigg|
& \leq (1+T) C_1   C_0    \varepsilon^{-1}(2-\alpha) \sum_{\gamma_1,\gamma_2 \in \{ \alpha,2\}}
  \( \phi^{(\eta_1^\varepsilon)}_{2}
\otimes
\phi^{(\eta_1^\varepsilon)}_{\gamma_1,\gamma_2} \)
\\
& \leq  4(1+T)^{\frac{2}{\alpha}} C_1 C_0 \hat C \varepsilon^{-1}(2-\alpha) {\mathrm B}(\eta_1^\varepsilon, \eta_2^\varepsilon)\sum_{\gamma_1,\gamma_2 \in \{ \alpha,2\}}
\phi^{(\eta_1^\varepsilon+\eta_2^\varepsilon)}_{\gamma_1,\gamma_2}
% \\ & \leq 4 (1+T)^{\frac{2+\alpha}{\alpha}}  C_1 C_0 \hat C \varepsilon^{-1}(2-\alpha) {\mathrm B}(\eta_2^\varepsilon, \eta_2^\varepsilon)\sum_{\gamma_1,\gamma_2 \in \{ \alpha,2\}} \phi^{(2\eta_2^\varepsilon)}_{\gamma_1,\gamma_2}   \\
%& \leq C^2 \varepsilon^{-1}(2-\alpha) {\mathrm B}(\eta_2^\varepsilon, \eta_2^\varepsilon)\sum_{\gamma_1,\gamma_2 \in \{ \alpha,2\}}  \phi^{(2\eta_2^\varepsilon)}_{\gamma_1,\gamma_2},
\\ &  \leq C^2 \varepsilon^{-1}(2-\alpha) {\mathrm B}(\eta_2^\varepsilon, \eta_2^\varepsilon)\sum_{\gamma_1,\gamma_2 \in \{ \alpha,2\}}  \phi^{(\eta_1^\varepsilon + \eta_2^\varepsilon)}_{\gamma_1,\gamma_2},
\end{align*}
where we used the non-increasing of  Beta functions again. Consequently, due to \eqref{eq:ZU90}, by the result established in  (Step 1), we obtain  the desired inequality for $n =1$, that is
\begin{align*}
|q_1^{(\alpha)} -q_1^{(2)} |(s,t;x,y) \leq C^2 \varepsilon^{-1}(2-\alpha) {\mathrm B}(\eta_2^\varepsilon, \eta_2^\varepsilon)
\sum_{\gamma_1,\gamma_2 \in \{ \alpha,2\}}
  \phi^{(\eta_1^\varepsilon + \eta_2^\varepsilon)}_{\gamma_1,\gamma_2}(s,t;x,y).
\end{align*}
By iteration and Lemma \ref{lem:WR01-2}, observing \eqref{eq:HU90} and combining the result obtained in (Step 1), it is easy to check that for $n \geq 2$,
 \begin{align*}
 &\quad |q_0^{(2)} \otimes ( q_{n-1}^{(\alpha)} - q_{n-1}^{(2)}) |\\
 & \leq (1+T)  C_0  C^{n}\varepsilon^{-1}(2-\alpha)
 \frac{\(\Gamma(\eta_2^\varepsilon)\)^{n}}{\Gamma (n\eta_2^\varepsilon)}    \sum_{\gamma_1,\gamma_2 \in \{ \alpha,2\}}
  \( \phi^{(\eta_1^\varepsilon)}_{2}
\otimes
\phi^{(\eta_1^\varepsilon+(n-1)\eta_2^\varepsilon)}_{\gamma_1,\gamma_2} \) \\
& \leq 4(1+T)^{\frac{2}{\alpha}}  C_0 \hat C C^{n}\varepsilon^{-1}(2-\alpha)
 \frac{\(\Gamma(\eta_2^\varepsilon)\)^{n}}{\Gamma (n\eta_2^\varepsilon)}
 \mathrm{B}(\eta_1^\varepsilon, n \eta_2^\varepsilon)\sum_{\gamma_1,\gamma_2 \in \{ \alpha,2\}} \phi^{(\eta_1^\varepsilon+n\eta_2^\varepsilon)}_{\gamma_1,\gamma_2} \\
 & \leq C^{n+1}\frac{\(\Gamma(\eta_2^\varepsilon)\)^{n+1}}{\Gamma ((n+1)\eta_2^\varepsilon)}
\sum_{\gamma_1,\gamma_2 \in \{ \alpha,2\}} \phi^{(\eta_1^\varepsilon+n\eta_2^\varepsilon)}_{\gamma_1,\gamma_2},
\end{align*}
which completes the proof.
\end{proof}

\begin{corollary}\label{lem-5.5} Assume that $T>0$  and $\alpha \in [\alpha_0,2]$ with some
$\alpha_0\in (1,2)$.
If the condition ($\mathbf{H}_b^\beta$)  holds, then there is a constant $c=
c(T,\alpha_0,\Theta)>1$ such that for all $(s,t;x,y) \in
\D_{T}$,
$$
q^{(\alpha)}(s,t;x,y) \leq c   \phi^{(\eta_0)}_{\alpha}  (s,t;x,y);
$$
and  for any $\alpha_0 \in ( \frac{12}{7} ,2)$ (to ensure $\eta_2>0$) and $\varepsilon= \frac{\alpha \beta}{2}$,
$$
|q^{(\alpha)}-q^{(2)}|(s,t;x,y) \leq c \varepsilon^{-1} (2-\alpha)
\sum_{\gamma_1,\gamma_2\in\{\alpha,2\}}
  \phi^{(\eta_1^\varepsilon)}_{\gamma_1,\gamma_2}(s,t;x,y).
$$
\end{corollary}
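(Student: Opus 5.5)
Both estimates come from summing the termwise bounds of Lemmas \ref{lem:YT00} and \ref{lem:YT01} over $n$, using that the resulting Gamma-ratio series converge. The one nonroutine point is comparing $\phi$'s with different exponents $\eta$; this is done with the elementary inequality $(t-s)^{a}\le (1+T)^{a-b}(t-s)^{b}$ for $a\ge b$ and $t-s\le T$.

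\textbf{First estimate.} By Lemma \ref{lem:YT00},
\[
|q_n^{(\alpha)}| \le C_0^{n+1}\hat C^n \frac{\Gamma(\eta_0)^{n+1}}{\Gamma((n+1)\eta_0)}\,\phi^{((n+1)\eta_0)}_\alpha .
\]
Since $(n+1)\eta_0 \ge \eta_0$ and $t-s\le T$, we have
\[
\phi^{((n+1)\eta_0)}_\alpha \le (1+T)^{n\eta_0}\,\phi^{(\eta_0)}_\alpha .
\]
Hence
\[
|q^{(\alpha)}| \le \phi^{(\eta_0)}_\alpha \sum_{n\ge0} C_0\,\frac{\bigl(C_0\hat C(1+T)\Gamma(\eta_0)\bigr)^{n}\,\Gamma(\eta_0)}{\Gamma((n+1)\eta_0)} .
\]
This is a Mittag-Leffler-type series, so it converges for every $\eta_0>0$. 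We need the bound uniformly in $\alpha\in[\alpha_0,2]$. Here $\eta_0=(\alpha+\beta-1)/\alpha \ge \beta/2>0$, and $\eta_0$ is bounded above by $1$. So $\Gamma(\eta_0)$ stays bounded, and $\Gamma((n+1)\eta_0)\ge \Gamma((n+1)\beta/2)/c$ grows superexponentially, uniformly in $\alpha$. This gives a constant depending only on $T,\alpha_0,\Theta$.

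\textbf{Second estimate.} Fix $\varepsilon=\alpha\beta/2$. First check that $\varepsilon<\alpha\eta_2$, i.e.\ $\eta_2^\varepsilon>0$, so that Lemma \ref{lem:YT01} applies. We have
\[
\eta_2=\frac{1+\beta}{2}+\frac{3(\alpha-2)}{\alpha},
\qquad
\eta_2^\varepsilon=\eta_2-\frac{\beta}{2}=\frac12-\frac{3(2-\alpha)}{\alpha}=\frac{7\alpha-12}{2\alpha}.
\]
This is positive exactly when $\alpha>12/7$. On $[\alpha_0,2]$ it is bounded below by $(7\alpha_0-12)/(2\alpha_0)>0$. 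This is where the hypothesis $\alpha_0\in(\frac{12}{7},2)$ enters. The constant $\varepsilon^{-1}$ is then bounded by $2/(\alpha_0\beta)$.

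Now write $q^{(\alpha)}-q^{(2)}=\sum_n (q_n^{(\alpha)}-q_n^{(2)})$ and apply the termwise bounds.

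\begin{itemize}
\item For $n=0$, bound \eqref{eq:YH00} gives $C_1\varepsilon^{-1}(2-\alpha)\sum\phi^{(\eta_1^\varepsilon)}_{\gamma_1,\gamma_2}$.
\item For $n\ge1$, the second part of Lemma \ref{lem:YT01} gives
\[
C_2^{n+1}\frac{\Gamma(\eta_2^\varepsilon)^{n+1}}{\Gamma((n+1)\eta_2^\varepsilon)}\,\phi^{(\eta_1^\varepsilon+n\eta_2^\varepsilon)} .
\]
The proof of that lemma (Steps 1–2) actually carries the factor $\varepsilon^{-1}(2-\alpha)$ in every term. I would make this explicit by rereading its iteration, since the final displayed statement of the lemma appears to drop the factor.
\end{itemize}

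Then use $\phi^{(\eta_1^\varepsilon+n\eta_2^\varepsilon)}\le (1+T)^{n\eta_2^\varepsilon}\phi^{(\eta_1^\varepsilon)}$ (valid because $\eta_2^\varepsilon>0$) to factor out $\phi^{(\eta_1^\varepsilon)}_{\gamma_1,\gamma_2}$. What remains is $\varepsilon^{-1}(2-\alpha)$ times the series
\[
\sum_n \frac{\bigl(C_2(1+T)\Gamma(\eta_2^\varepsilon)\bigr)^{n+1}}{\Gamma((n+1)\eta_2^\varepsilon)} .
\]
Since $\eta_2^\varepsilon$ lies in a compact subinterval of $(0,1]$, this series converges uniformly in $\alpha$.

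\textbf{Main obstacle.} The delicate step is confirming that the factor $(2-\alpha)\varepsilon^{-1}$ propagates through every iterate. The recursion \eqref{eq:ZU90} does this: each term contains exactly one difference factor, either $q_0^{(\alpha)}-q_0^{(2)}$ or, inductively, $q_{n-1}^{(\alpha)}-q_{n-1}^{(2)}$. The second check is that all constants ($C_2$, the factors $(1+T)^{2/\alpha}$, the Beta-function bounds) are uniform in $\alpha\in[\alpha_0,2]$. This follows from monotonicity of the Beta function and the positive lower bounds on $\eta_0$ and $\eta_2^\varepsilon$ established above.
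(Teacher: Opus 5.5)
Your proposal is correct and follows essentially the same route as the paper: sum the termwise bounds of Lemmas \ref{lem:YT00} and \ref{lem:YT01}, factor out $\phi^{(\eta_0)}_\alpha$ (resp.\ $\phi^{(\eta_1^\varepsilon)}_{\gamma_1,\gamma_2}$) using $(t-s)^{n\eta}\le T^{n\eta}$, and control the resulting Mittag-Leffler-type series uniformly in $\alpha$ via the positive lower bounds on $\eta_0$ and on $\eta_2^\varepsilon=\frac{7\alpha-12}{2\alpha}$. You are also right that the factor $\varepsilon^{-1}(2-\alpha)$ is carried through the proof of Lemma \ref{lem:YT01} but dropped from its displayed statement, and the paper's own summation implicitly uses it exactly as you do.
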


\begin{proof}
Due to the Stirling formula,  we have
that for any $a>0, b>0$,
$ \sum_{n=0}^\infty   \frac{ b^n}{\Gamma ((n+1)a )}   <\infty.$
Hence, observing that $0< \frac{\alpha_0 + \beta -1}{\alpha_0} \leq \eta_0\leq 1$, by Lemma \ref{lem:YT00}, we have
\begin{align*}
q^{(\alpha)}(s,t;x,y) =\sum_{n=0}^\infty q_n^{(\alpha)}(s,t;x,y)& \leq
\sum_{n=0}^\infty   \frac{\(\Gamma(\eta_0) C_0\hat C \)^{n+1}}{\Gamma ((n+1)\eta_0)}  \phi^{((n+1)\eta_0)}_{\alpha}(s,t;x,y)\\
&\leq \Gamma(\eta_0)C_0\hat C
\sum_{n=0}^\infty   \frac{\(\Gamma(\eta_0) C_0\hat C |t-s|^{\eta_0}\)^{n}}{\Gamma ((n+1)\eta_0)}  \phi^{(\eta_0)}_{\alpha}(s,t;x,y)\\
& \lesssim  \phi^{(\eta_0)}_{\alpha}(s,t;x,y),
\end{align*}
where we employed fact that the boundedness of Gamma's function on any closed interval, along with its monotonic increasing property on $(2, +\infty)$. Similarly, applying Lemma \ref{lem:YT01} with $\varepsilon=\frac{\alpha \beta}{2}$, we have $\eta_2^\varepsilon=\frac{7\alpha-12}{2\alpha}$ with  $0< \frac{7\alpha_0-12}{2\alpha_0}\leq \eta_2^\varepsilon \leq 1$ and
\begin{align*}
|q^{(\alpha)}-&q^{(2)} |(s,t;x,y)
  \leq \sum_{n=0}^\infty | q_n^{(\alpha)} -q_n^{(2)}| (s,t;x,y)\\
& \leq \varepsilon^{-1}(2-\alpha) \sum_{n=0}^\infty
 \frac{\(\Gamma(\eta_2^\varepsilon)C_2\)^{n+1}}{\Gamma ((n+1)\eta_2^\varepsilon)}   \sum_{\gamma_1,\gamma_2\in\{\alpha,2\}}
\phi^{(\eta_1^\varepsilon+ n\eta_2^\varepsilon)}_{\gamma_1,\gamma_2} (s,t;x,y)\\
& \leq  \varepsilon^{-1}(2-\alpha)  C_2 \Gamma(\eta_2^\varepsilon)
\sum_{n=0}^\infty
 \frac{\(\Gamma(\eta_2^\varepsilon) C_2 T^{\eta_2^{\varepsilon} }\)^{n}}{\Gamma ((n+1)\eta_2^\varepsilon)}   \sum_{\gamma_1,\gamma_2\in\{\alpha,2\}}
\phi^{(\eta_1^\varepsilon)}_{\gamma_1,\gamma_2} (s,t;x,y)\\
& \lesssim \varepsilon^{-1} (2-\alpha)
\sum_{\gamma_1,\gamma_2\in\{\alpha,2\}}
  \phi^{(\eta_1^\varepsilon)}_{\gamma_1,\gamma_2}(s,t;x,y).
\end{align*}
The desired estimates are established.
\end{proof}

\subsection{Proof of Theorem \ref{th-1.1}}
(1) Recalling \eqref{eq:DF01} and the definition \eqref{eq:phi},
by Lemma \ref{lem:NH01} and Corollary \ref{lem-5.5}, we get
\begin{align*}
\hp_{b}^{(\alpha)} & =  \hp_0^{(\alpha)} +\hp_0^{(\alpha)}
\otimes q^{(\alpha)}
 \lesssim   \(  \phi^{(1)}_{\alpha}  +  \phi^{(1)}_{\alpha}\otimes  \phi^{(\eta_0)}_{\alpha}\)\\
&\lesssim    \phi^{(1)}_{\alpha} +  \mathrm{B}\(1,\eta_0\)  \phi^{(1+\eta_0)}_{\alpha}
\lesssim    \phi^{(1)}_{\alpha} ,
\end{align*}
where we used Lemma \ref{lem:WR01-2} and the fact $1\geq \eta_0 \geq \frac{\alpha_0+\beta-1}{\alpha_0}>0$.

\medskip\noindent(2)
Using \eqref{eq:DF01} again, we have that
$$
|\hp_{b}^{(\alpha)}-\hp_{b}^{(2)}| \leq
|\hp_0^{(\alpha)}-\hp_0^{(2)}|+|(\hp_0^{(\alpha)}-\hp_0^{(2)})\otimes q^{(2)}|+|\hp_0^{(\alpha)} \otimes (q^{(\alpha)}-
q^{(2)})|.
$$
By Lemma \ref{lem:AA01} and the definition \eqref{eq:phi}, one sees that
\begin{align*}
|\hp_0^{(\alpha)}-\hp_0^{(2)}|
 \lesssim \varepsilon^{-1} (2-\alpha) \sum_{\gamma_1,\gamma_2\in\{\alpha,2\}} \phi^{(\frac{
2\alpha-2}{\alpha}-\frac{\varepsilon}{\alpha})}_{\gamma_1,\gamma_2}
% \leq (1+T)^{\frac{4-\alpha_0}{\alpha_0}-\frac{1+\beta}{2}} \varepsilon^{-1} (2-\alpha) \sum_{\gamma_1,\gamma_2\in\{\alpha,2\}} \phi^{(\frac{2\alpha-2}{\alpha}-\frac{\varepsilon}{\alpha})}_{\gamma_1,\gamma_2} ,
\end{align*}
which, together with Corollary \ref{lem-5.5} and Lemma \ref{lem:WR01-2}, derives that
\begin{align*}
& \quad |(\hp_0^{(\alpha)}-\hp_0^{(2)})\otimes q^{(2)}|\\
&\lesssim \varepsilon^{-1}  (2-\alpha)  \sum_{\gamma_1,\gamma_2\in\{\alpha,2\}}\left( \phi^{(\frac{ 2\alpha-2}{\alpha} - \frac{\varepsilon}{\alpha})}_{\gamma_1,\gamma_2}   \otimes  \phi^{(\frac{ \beta+1}{2})}_{2} \right) \\
& \lesssim (1+T)^{\frac{2-\alpha}{\alpha}} \mathrm{B}(\tfrac{ 3(\alpha-2)}{\alpha} - \tfrac{\varepsilon}{\alpha}+\tfrac{2}{\alpha},\tfrac{1+\beta}{2})\varepsilon^{-1}(2-\alpha)
\sum_{\gamma_1,\gamma_2\in\{\alpha,2\}}\phi^{(\eta_2^\varepsilon+\frac{2}{\alpha})}_{\gamma_1,\gamma_2}
\\
 &\lesssim \varepsilon^{-1} (2-\alpha)
\sum_{\gamma_1,\gamma_2\in\{\alpha,2\}}\phi^{(\eta_2^\varepsilon)}_{\gamma_1,\gamma_2}.
\end{align*}
Due to  Lemma \ref{lem:NH01}, Corollary \ref{lem-5.5}, and Lemma \ref{lem:WR01-2},
we get that for $ \alpha_0 > \frac{12}{7}$ and $\varepsilon =\frac{\alpha\beta}{2}$,
\begin{align*}
|\hp_0^{(\alpha)} \otimes (q^{(\alpha)}- q^{(2)})|
&  \lesssim \varepsilon^{-1} (2-\alpha)
\sum_{\gamma_1,\gamma_2\in\{\alpha,2\}} \( \phi_\alpha^{(1)} \otimes  \phi^{(\eta_1^\varepsilon)}_{\gamma_1,\gamma_2}\) \\
&\lesssim (1+T)^{\frac{2-\alpha}{\alpha}} \mathrm{B}(1,\eta_2^\varepsilon )\varepsilon^{-1}(2-\alpha)
\sum_{\gamma_1,\gamma_2\in\{\alpha,2\}}
 \phi^{(\eta_2^\varepsilon+1)}_{\gamma_1,\gamma_2}\\
 & \lesssim \varepsilon^{-1}(2-\alpha)
\sum_{\gamma_1,\gamma_2\in\{\alpha,2\}}
 \phi^{(\eta_2^\varepsilon)}_{\gamma_1,\gamma_2},
\end{align*}
where we used  fact that $\eta_2^\varepsilon=\frac{7\alpha-12}{2\alpha}\geq  \frac{7\alpha_0-12}{2\alpha_0}>0 $ and  the non-increasing of  Beta functions again. Combining the calculations above and noticing that
$$
\frac{
2\alpha-2}{\alpha}-\frac{\varepsilon}{\alpha} - \eta_2^\varepsilon = \frac{4-\alpha}{\alpha} - \frac{1+\beta}{2}> 0,
$$
we get the desired result.

%%%%%%%%%%%%%%%%%%%%%%%%%%%%%%%%%%%%%%%%%%%%%%%%
\section{Optimal convergence rate: Proof of Theorem \ref{th-6.1}}\label{sec:last}
%%%%%%%%%%%%%%%%%%%%%%%%%%%%%%%%%%%%%%%%%%%%%%%%

Throughout this section, we assume that $\alpha \in (0,2]$ and the condiction
$(\mathbf{H}_b^\beta)$ holds.  To study
invariant measures, we make an additional dissipative assumption $(\mathbf{H}_b^{\rm diss})$ on the drift coefficients. By Theorem 1.2 of~\cite{ZZ23}, under our assumptions, there exists a
unique invariant measure $\mu^{(\alpha)}$ of the Markov transition
semigroup $\{P^{(\alpha)}_t\} _{t\geq 0}$ (or the infinitesimal generator ${\sL}^{(\alpha)} + b \cdot \nabla$) associated with equation
\eqref{e1}, where $\sL^{(\alpha)}$  is given by \eqref{eq:df-L}.

\begin{proposition}[Contraction property]\label{cor-6.00}
For every $p\in(0,\alpha)$ and any probability measure $\tilde{\mu}$, there
exists $t_0 > 0$ such that for every $t\geq t_0$,
$$
\| {P_t^{(\alpha)}}^* (\tilde{\mu}-  \mu )\|_{\var,V_p}\leq \frac{1}{2} \|
\tilde{\mu}- \mu \|_{\var,V_p},
$$
where  $V_p(x)=1+|x|^p$.
\end{proposition}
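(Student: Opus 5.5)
We will obtain the contraction from Harris's theorem in the weighted form of Hairer--Mattingly. Fix $p\in(0,\alpha)$, take $\mu=\mu^{(\alpha)}$, and write $V=V_p$. Harris's theorem gives a contraction in the norm $\|\cdot\|_{\var,1+\beta' V}$ for a suitable small $\beta'>0$. That norm is equivalent to $\|\cdot\|_{\var,V}$, and iterating the contraction then yields the factor $\tfrac12$ in the original norm for all sufficiently large $t$. Two ingredients are needed: a Lyapunov (drift) condition and a minorization (small-set) condition for $P^{(\alpha)}_{t}$ at some fixed time.

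\textbf{Step 1 (Lyapunov condition).} Let $\tilde V(x)=(1+|x|^2)^{p/2}$, which is comparable to $V_p$. We apply the generator $\sL^{(\alpha)}+b\cdot\nabla$ to $\tilde V$. For the drift part, $(\mathbf{H}_b^{\rm diss})$ gives
\[
b\cdot\nabla\tilde V(x)=p(1+|x|^2)^{p/2-1}\langle x,b(x)\rangle\le -c_0p|x|^{2+r}(1+|x|^2)^{p/2-1}+c,
\]
which is at most $-c\,\tilde V(x)$ for large $|x|$ since $r\ge 0$. For the nonlocal part, $\nabla^2\tilde V$ is bounded, and because $p<\alpha$ the growth $|\tilde V(x+z)-\tilde V(x)|\lesssim |z|^p$ is integrable against $\nu^{(\alpha)}$ on $\{|z|>1\}$ (cf. \eqref{eq:JH02}). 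Hence $\sL^{(\alpha)}\tilde V$ is bounded. Combining the two, $(\sL^{(\alpha)}+b\cdot\nabla)\tilde V\le -\lambda\tilde V+K$. Using Itô's formula with localization, which is justified by the non-explosion result of \cite{ZZ23}, we get
\[
P^{(\alpha)}_t\tilde V\le e^{-\lambda t}\tilde V+K/\lambda .
\]

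\textbf{Step 2 (minorization).} We fix $t_1>0$ and $R>0$ large enough that the level set $C=\{\tilde V\le R\}$ satisfies $2K/\lambda<R$. We need $\varepsilon>0$ and a probability measure $\nu$ such that $P^{(\alpha)}_{t_1}(x,\cdot)\ge\varepsilon\nu$ for all $x\in C$. This follows from a lower bound on the density $\hp^{(\alpha)}_b(0,t_1;x,y)$ that is uniform for $x,y$ in a ball. Such a bound comes from the parametrix expansion \eqref{eq:DF01}: $\hp_0^{(\alpha)}(0,t_1;x,y)$ is bounded below on compacts, and the correction term $\hp_0^{(\alpha)}\otimes q^{(\alpha)}$ is of smaller order for short times by Corollary \ref{lem-5.5}. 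If that route becomes awkward, we would instead combine strong Feller and irreducibility with continuity and positivity of the density (as in \cite{ZZ23}). We then take $\nu$ to be normalized Lebesgue measure on a ball.

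\textbf{Step 3 (conclusion).} The Hairer--Mattingly version of Harris's theorem gives $\gamma<1$ and $\beta'>0$ such that
\[
\|{P^{(\alpha)}_{t_1}}^*(\tilde\mu-\mu)\|_{\var,1+\beta'\tilde V}\le\gamma\|\tilde\mu-\mu\|_{\var,1+\beta'\tilde V}.
\]
By the semigroup property this extends to $t=nt_1$ with factor $\gamma^n$. For intermediate times we use that $P^{(\alpha)}_s$ is bounded on the weighted space uniformly for $s\le t_1$, which follows from Step 1. Converting back to $V_p$ costs a fixed constant, and choosing $t_0$ large absorbs it into $\tfrac12$. The main obstacle is the lower bound in Step 2, because Theorem \ref{th-1.1} supplies only upper bounds; the parametrix lower bound must be argued directly, or replaced by an irreducibility argument. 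Note also that $t_0$ depends on $\alpha$ unless these constants are tracked uniformly.
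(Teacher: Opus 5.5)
Your argument is correct in outline and uses the same framework as the paper: a Lyapunov function comparable to $V_p$, followed by Harris's theorem. Where you differ is in how you verify the small-set hypothesis.

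The paper cites the drift inequality \eqref{eq:Harris-1} from \cite{ZZ23}. Instead of a Doeblin minorization, it checks the pairwise condition \eqref{eq:Harris-2}. The logarithmic gradient bound $|\nabla_x\log\hp^{(\alpha)}_b(0,t;x,y)|\lesssim t^{-1/\alpha}$ from \cite{MZ22} gives $\|{P_t^{(\alpha)}}^*(\delta_x-\delta_y)\|_{\var}\lesssim t^{-1/\alpha}|x-y|$. This is made small on $\{V_p(x)+V_p(y)\le R\}$ by taking $t$ large, and Theorem 5.2 of \cite{CM23} finishes the proof. That route uses only upper and gradient bounds, the paper's native tools. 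However, it needs the constant in the gradient bound to stay bounded as $t$ grows, which an estimate on $\D_T$ does not give by itself.

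Your route works at a fixed short time $t_1$, where the parametrix is accurate. It also makes explicit two things the paper leaves inside the cited theorem: the iteration to all $t\ge t_0$, and the equivalence of $\|\cdot\|_{\var,1+\beta'\tilde V}$ with $\|\cdot\|_{\var,V_p}$. The price is a lower bound on the density, which Theorem \ref{th-1.1} does not supply.

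On that lower bound, the case $\alpha\in(1,2)$ is fine. There $p^{(\alpha)}\asymp\varrho_\alpha$, while Corollary \ref{lem-5.5} and Lemma \ref{lem:WR01-2} give $|\hp_0^{(\alpha)}\otimes q^{(\alpha)}|(0,t;x,y)\lesssim t^{\eta_0}\varrho_\alpha(t,\theta^{(1)}_{0,t}(x)-y)$. Hence $\hp^{(\alpha)}_b(0,t_1;x,y)\gtrsim\varrho_\alpha(t_1,\theta^{(1)}_{0,t_1}(x)-y)$ for small $t_1$, uniformly for $x\in C$ and $y$ in a ball.

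The case $\alpha=2$ is the only one the proof of Theorem \ref{th-6.1} actually uses, and there the same comparison works only near the diagonal. The main term is Gaussian, while the correction is controlled only by the polynomially decaying $\varrho_2$, which dominates off the diagonal. For $\alpha=2$ you therefore need one of the following:
\begin{itemize}
\item a chaining argument;
\item the two-sided Gaussian bounds of \cite{MPZ21};
\item your strong Feller plus irreducibility fallback.
\end{itemize}

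Two smaller fixes are also needed.
\begin{itemize}
\item For $p\in(1,\alpha)$, the bound $|\tilde V(x+z)-\tilde V(x)|\lesssim|z|^p$ is false uniformly in $x$. Use instead the compensated increment $|\tilde V(x+z)-\tilde V(x)-z\cdot\nabla\tilde V(x)|\lesssim|z|^2\wedge|z|^p$, which follows from the $(p-1)$-H\"older continuity of $\nabla\tilde V$, as in Lemma \ref{lem-6.0}.
\item With $P_{t_1}\tilde V\le e^{-\lambda t_1}\tilde V+K/\lambda$, the Hairer--Mattingly threshold is $R>2K/(\lambda(1-e^{-\lambda t_1}))$ rather than $2K/\lambda$. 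This is harmless, since your minorization holds on any compact set.
\end{itemize}
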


%The time $t_0$ is dependet of $alpha$ or $alpha_0$. But We only use the case of 2.

\begin{proof}
We only prove the case of $\alpha <2$, since the case of $\alpha=2$ is similar. Due to  Eq. (24) of~\cite{ZZ23}, there exist $K_1,K_2 >0$ such that for all $x\in \R^d$,
 \begin{equation}\label{eq:Harris-1}
({\sL}^{(\alpha)}+ b\cdot \nabla) V_p(x) \leq -K_1 V_p(x)+K_2.
 \end{equation}
 On the other hand, we have
\begin{align*}
\| {P_t^{(\alpha)}}^* (\delta_x- \delta_y) \|_{\var}
&=\sup_{\| h \|_\infty \leq 1} \bigg| \EX \[h(X^{(\alpha)}_t(x))-h(X^{(\alpha)}_t(y))\] \bigg| \\
&=\sup_{\| h\|_\infty \leq 1} \left| \int_{\R^d} h(z)(\hp_{b}^{(\alpha)}(0,t;x,z)-\hp_{b}^{(\alpha)}(0,t;y,z)) \dif z \right| \\
&\leq \int_{\R^d} \left| \hp_{b}^{(\alpha)}(0,t;x,z)-\hp_{b}^{(\alpha)}(0,t;y,z) \right |\dif z
\lesssim t^{-\frac{1}{\alpha}}|x-y|,
\end{align*}
where, in the last inequality, we used Eq. (1.25) from~\cite{MZ22}, i.e. $|\nabla_x \log \hp_{b}^{(\alpha)}(0,t;x,y)| \lesssim t^{-\frac{1}{\alpha}}.$
Thus, for every $R>2$, $\varepsilon \in(0,1)$,  we choose  $t>0$ large enough such that for all  $(x,y) \in  \{(x,y) \mid V_p(x)+V_p(y)\leq R\}$,
 \begin{equation}\label{eq:Harris-2}
\|{P_t^{(\alpha)}}^*(\delta_x- \delta_y) \|_{\var}   \leq 2\varepsilon.
 \end{equation}
Due to the Harris theorem (cf. Theorem 5.2 in~\cite{CM23}), the proof is complete by using \eqref{eq:Harris-1} and \eqref{eq:Harris-2}.
\end{proof}

We also need the following  result about uniform bound for moments of invariant
measures $\mu^{(\alpha)}$.

\begin{lemma}\label{lem-6.0} For all $  \gamma \in ( 0, 2)$,
\begin{equation*}
\sup_{\alpha \in [\frac{\gamma+2}{2}, 2]}\int  |x|^\gamma
\mu^{(\alpha)}(\dif x) < \infty.
\end{equation*}
\end{lemma}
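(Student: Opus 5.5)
We prove the uniform moment bound with a Lyapunov function whose constants do not depend on $\alpha$, and then pass to the stationary measure. Fix $\gamma\in(0,2)$ and take $V_\gamma(x)=(1+|x|^2)^{\gamma/2}$, which is comparable to $1+|x|^\gamma$. The aim is the drift inequality
\begin{align*}
(\sL^{(\alpha)}+b\cdot\nabla)V_\gamma(x)\leq -K_1|x|^{\gamma}+K_2,
\end{align*}
with $K_1,K_2>0$ depending only on $\gamma,\widetilde\Theta$, uniformly for $\alpha\in[\frac{\gamma+2}{2},2]$.

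\textbf{Drift part.} Since $\nabla V_\gamma(x)=\gamma(1+|x|^2)^{\gamma/2-1}x$, condition $(\mathbf{H}_b^{\rm diss})$ gives
\begin{align*}
b\cdot\nabla V_\gamma(x)\leq \gamma(1+|x|^2)^{\gamma/2-1}\bigl(-c_0|x|^{2+r}+c_1\bigr)\leq -c|x|^{\gamma+r}+C\leq -c|x|^\gamma+C',
\end{align*}
using $r\geq0$. All constants here are independent of $\alpha$.

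\textbf{Jump part.} We split $\sL^{(\alpha)}V_\gamma(x)$ into $|z|\leq1$ and $|z|>1$. For $|z|\le1$, Taylor's formula together with $\|\nabla^2V_\gamma\|_\infty<\infty$ (valid since $\gamma<2$) bounds the contribution by $\|\nabla^2 V_\gamma\|_\infty\int_{|z|\le1}|z|^2\nu^{(\alpha)}(\dif z)$. For $|z|>1$, in the compensated form \eqref{eq:df-L}, use $|V_\gamma(x+z)-V_\gamma(x)-z\cdot\nabla V_\gamma(x)|\lesssim |z|^{\gamma}$ for $\gamma\le 1$. For $\gamma\in(1,2)$, $\nabla V_\gamma$ is $(\gamma-1)$-Hölder, so the same bound $\lesssim|z|^\gamma$ holds. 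The contribution is then $\lesssim\int_{|z|>1}|z|^\gamma\nu^{(\alpha)}(\dif z)$. Both integrals are bounded uniformly over $\alpha\in[\frac{\gamma+2}{2},2)$ by \eqref{un-alpha-0}; this is exactly why the range $\alpha\ge\frac{\gamma+2}{2}$ appears. For $\alpha=2$ the jump part is just $\frac12\Delta V_\gamma$, which is bounded. Hence $\sup_\alpha\|\sL^{(\alpha)}V_\gamma\|_\infty<\infty$, and the drift inequality follows.

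\textbf{Passing to $\mu^{(\alpha)}$.} Apply Itô/Dynkin to $V_\gamma(X_t^{(\alpha)}(0))$ to get $\frac{\dif}{\dif t}\EX V_\gamma(X_t)\le -K_1\EX|X_t|^\gamma+K_2$. Integrating over $[0,T]$ gives
\begin{align*}
\frac1T\int_0^T\EX|X^{(\alpha)}_t(0)|^\gamma\dif t\le \frac{K_2}{K_1}+\frac{V_\gamma(0)}{K_1T}.
\end{align*}
To make Dynkin rigorous we localize with stopping times. By ergodicity (uniqueness of $\mu^{(\alpha)}$, Theorem 1.2 of~\cite{ZZ23}), the time averages of $\mathrm{Law}(X_t^{(\alpha)}(0))$ converge weakly to $\mu^{(\alpha)}$. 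Lower semicontinuity of $\int|x|^\gamma\dif\mu$ under weak convergence then gives $\int|x|^\gamma\mu^{(\alpha)}(\dif x)\le K_2/K_1$, uniformly in $\alpha$.

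An alternative route is to integrate the drift inequality against $\mu^{(\alpha)}$ directly, using invariance $\int\sL V\,\dif\mu=0$. That requires a truncation of $V_\gamma$ and is less clean, so the time-average argument is preferred. The main obstacle is the uniform control of the large-jump term $\int_{|z|>1}|z|^\gamma\nu^{(\alpha)}(\dif z)$, whose bound degenerates like $\frac{1}{\alpha-\gamma}$ as $\alpha\downarrow\gamma$. This is handled precisely by \eqref{un-alpha-0} under the restriction $\alpha\ge\frac{\gamma+2}{2}>\gamma$.
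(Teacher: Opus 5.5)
Your proof is correct, up to one local slip in the jump estimate that is easy to repair (second paragraph). It shares the paper's Lyapunov computation and differs from the paper only in how the bound is transferred to $\mu^{(\alpha)}$.

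\textbf{Comparison of routes.} The shared part is the same test function $V_\gamma(x)=(1+|x|^2)^{\gamma/2}$, the same use of $(\mathbf{H}_b^{\rm diss})$ for the drift term, and the same split of the jump part at $|z|=1$, controlled uniformly in $\alpha$ by \eqref{un-alpha-0}. The paper integrates the drift inequality directly against $\mu^{(\alpha)}$, using $\int(\sL^{(\alpha)}+b\cdot\nabla)V_\gamma\,\dif\mu^{(\alpha)}=0$. This takes one line and gives the slightly stronger bound $\int|x|^{r+\gamma}\mu^{(\alpha)}(\dif x)\le c$. For the unbounded $V_\gamma$, however, it is only formal: it presupposes that $V_\gamma$ and $b\cdot\nabla V_\gamma$ (of order $|x|^{\gamma+r}$) are $\mu^{(\alpha)}$-integrable, which is essentially the conclusion. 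Your route uses It\^o's formula with localization from the point $0$, Ces\`aro averages of ${P_t^{(\alpha)}}^*\delta_0$, identification of the limit, and weak lower semicontinuity of $\mu\mapsto\int|x|^\gamma\dif\mu$. It needs no a priori integrability under $\mu^{(\alpha)}$ and is the standard rigorous way to get moments from a Lyapunov inequality. The price is that you should say explicitly why the averages converge to $\mu^{(\alpha)}$: they are tight by the bound you just derived, every limit point is invariant because $P^{(\alpha)}_t$ is (strong) Feller, and uniqueness then identifies the limit. Alternatively, Proposition~\ref{cor-6.00} gives ${P_t^{(\alpha)}}^*\delta_0\to\mu^{(\alpha)}$ directly. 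If you keep $-c|x|^{\gamma+r}$ instead of weakening it to $-c|x|^\gamma$, you also recover the paper's $|x|^{r+\gamma}$ moment.

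\textbf{Correction in the jump estimate.} For $\gamma\in(0,1)$ and $|z|>1$, the bound $|V_\gamma(x+z)-V_\gamma(x)-z\cdot\nabla V_\gamma(x)|\lesssim|z|^\gamma$ uniformly in $x$ is false. For $|x|=1$ and $z=Rx$ one has $z\cdot\nabla V_\gamma(x)=\gamma 2^{\gamma/2-1}R$, which is not $O(R^\gamma)$. The conclusion still holds, in either of two ways.
\begin{itemize}
\item Since $\alpha\ge 1+\frac{\gamma}{2}>1$ and $\nu^{(\alpha)}$ is symmetric, $\int_{|z|>1}z\,\nu^{(\alpha)}(\dif z)=0$. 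So on $\{|z|>1\}$ you may use the uncompensated increment, and $|V_\gamma(x+z)-V_\gamma(x)|\le|z|^\gamma$ does hold for $\gamma\le1$: compose the $1$-Lipschitz map $x\mapsto(1+|x|^2)^{1/2}$ with $u\mapsto u^\gamma$.
\item Or bound the extra term by $\gamma|z|$ and use $\int_{|z|>1}|z|\,\nu^{(\alpha)}(\dif z)\lesssim\frac{2-\alpha}{\alpha-1}\le\frac{2-\gamma}{\gamma}$, from \eqref{eq:JH02}.
\end{itemize}
For $\gamma\in(1,2)$, your compensated form combined with the $(\gamma-1)$-H\"older continuity of $\nabla V_\gamma$ is exactly right, and it is genuinely needed there. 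The paper's uncompensated bound $|f(x+z)-f(x)|\lesssim|z|^\gamma$ on $\{|z|>1\}$ fails uniformly in $x$ when $\gamma>1$, because $\nabla f$ grows like $|x|^{\gamma-1}$. A clean argument therefore uses the uncompensated increment for $\gamma\le1$ and the compensated one for $\gamma>1$.
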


\begin{proof}
Let $f(x):=(| x |^2 +1)^\frac{\gamma}{2}$ with $\gamma \in (0,2)$. It is easy to check that $\nabla f(x)=\gamma
x(|x|^2 +1)^{\frac{\gamma}{2}-1}$, and then by $(\mathbf{H}_b^{\rm diss})$,
$$
(b\cdot\nabla) f(x) \leq
2(|x|^2+1)^{\frac{\gamma}{2}-1} (-c_0|x|^{2+r}+c_1)\leq 2c_1-2c_0
|x|^{r+\gamma}.
$$
Following the calculations of equations (3.4)-(3.5) in~\cite{DXZ11}, one sees that
\begin{align*}
\left| f(x+   z) - f(x) - z \1_{|z|\leq
1} \cdot \nabla f(x)\right| \lesssim |z|^2 \1_{|z|\leq
1}+ |z|^\gamma \1_{|z|>
1}.
\end{align*}
Furthermore, in view of \eqref{un-alpha-0}, there is a constant $c>1$ independent of $\alpha$ such that  for every $\gamma \in (0,2)$,
\begin{align*}
 ({\sL}^{(\alpha)} + b \cdot \nabla ) f(x) 
 & \lesssim \sup_{\alpha \in [\frac{\gamma+2}{2},2)}\int_{\mR^d} ( |z|^2 \1_{|z|\leq
1}+ |z|^\gamma \1_{|z|>
1})
\nu^{(\alpha)}(\dif z) + 1-|x|^{r+\gamma}\\
& \leq  2c c_0  - 2c_0 |x |^{r+\gamma}.
\end{align*}
On the other hand, due to the invariance, we have
\begin{equation*}
\int_{\mR^d} \( {\sL}^{(\alpha)} + b \cdot \nabla \) f(x)  \mu^{(\alpha)}(\dif x)=0.
\end{equation*}
Hence, combining the calculations above, we obtain that
$$
\int_{\mR^d}  |x |^{r+\gamma}  \mu^{(\alpha)}(\dif x) \leq c, \quad \text{with} \quad r\geq 0,
$$
and then
$$
\int_{\mR^d}  |x |^{\gamma}  \mu^{(\alpha)}(\dif x) \leq c.
$$
 A similar argument yields the case of $\alpha=2$. The proof is finished.
\end{proof}

\begin{corollary}\label{cor-6.1}
For every  $t>0$ and $\alpha \in [\alpha_0,2]$ withs some $\alpha_0\in (\frac{12}{7},2)$, there is a constant  $c=
c( t,\alpha_0, p, \widetilde \Theta)>0$   such that
$$
 \| ( {P_{t}^{(\alpha)}}^*  - {P_{t}^{(2)}}^*) \mu^{(\alpha)}
 \|_{\var,V_p}\leq c (2-\alpha),
$$
where  $V_p(x)=1+|x|^p$ with $ p\in(0,\alpha)$.
\end{corollary}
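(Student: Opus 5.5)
Fix $t>0$ and a test function $h$ with $|h|\le V_p$. Since $\mu^{(\alpha)}$ is a probability measure,
\begin{align*}
\big|\big(({P_{t}^{(\alpha)}}^*-{P_{t}^{(2)}}^*)\mu^{(\alpha)}\big)(h)\big|
&=\Big|\int_{\mR^d}\int_{\mR^d} h(y)\,\big(\hp_b^{(\alpha)}-\hp_b^{(2)}\big)(0,t;x,y)\,\dif y\,\mu^{(\alpha)}(\dif x)\Big|\\
&\le \int_{\mR^d}\int_{\mR^d}(1+|y|^p)\,\big|\hp_b^{(\alpha)}-\hp_b^{(2)}\big|(0,t;x,y)\,\dif y\,\mu^{(\alpha)}(\dif x).
\end{align*}
I will bound the inner integral pointwise in $x$ by $c(2-\alpha)(1+|x|^p)$ and then integrate against $\mu^{(\alpha)}$ using Lemma \ref{lem-6.0}. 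For the pointwise difference I use Theorem \ref{th-1.1}(ii) with $T=t$. This gives
\[
\big|\hp_b^{(\alpha)}-\hp_b^{(2)}\big|(0,t;x,y)\lesssim (2-\alpha)\sum_{\gamma_1,\gamma_2\in\{\alpha,2\}} t^{\eta-1}\varrho_{\gamma_1,\gamma_2}\big(t,\theta^{(1)}_{0,t}(x)-y\big),\qquad \eta=\tfrac{7\alpha-12}{2\alpha}.
\]
Since $t$ is fixed and $\eta\ge\frac{7\alpha_0-12}{2\alpha_0}>0$, the factor $t^{\eta-1}$ is a constant depending only on $t$ and $\alpha_0$.

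To handle the weight, write $|y|^p\lesssim |\theta^{(1)}_{0,t}(x)-y|^p+|\theta^{(1)}_{0,t}(x)|^p$, and shift $z=\theta^{(1)}_{0,t}(x)-y$. By \eqref{eq:LK01}, applied with $\vartheta=0$ and $\vartheta=p$, we get $\int |z|^\vartheta\varrho_{\gamma_1,\gamma_2}(t,z)\,\dif z\lesssim_t 1$. This requires $p<\gamma_2$ for both $\gamma_2\in\{\alpha,2\}$, which holds since $p<\alpha\le 2$. The constant stays uniform in $\alpha\in[\alpha_0,2]$ provided we restrict to $p\le p_0<\alpha_0$. Since $c$ is allowed to depend on $p$, I fix $p$ and note that $\alpha-p\ge$ some positive margin. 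If $p$ is close to $\alpha$ this margin degenerates, so I would state the constant for $p<\alpha_0$, or argue via $\alpha\ge\alpha_0$ with $p\in(0,\alpha_0)$. This is the only delicate point.

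For the flow term, growth of $b$ under $(\mathbf{H}_b^\beta)$ is at most linear, so Gronwall gives $|\theta^{(1)}_{0,t}(x)|\le C(t)(1+|x|)$. Hence the inner integral is $\lesssim(2-\alpha)(1+|x|^p)$ with a constant depending on $t,\alpha_0,p,\widetilde\Theta$.

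Finally,
\[
\int_{\mR^d}(1+|x|^p)\,\mu^{(\alpha)}(\dif x)\le 1+\sup_{\alpha}\int_{\mR^d}|x|^p\,\mu^{(\alpha)}(\dif x).
\]
By Lemma \ref{lem-6.0} with $\gamma=p$, this is finite uniformly over $\alpha\in[\frac{p+2}{2},2]$. For the remaining $\alpha\in[\alpha_0,\frac{p+2}{2})$, if that range is nonempty, I would rerun the proof of Lemma \ref{lem-6.0}. The Lévy-measure bound \eqref{eq:JH02} gives $\int_{|z|>1}|z|^p\nu^{(\alpha)}(\dif z)\lesssim \frac{2-\alpha}{\alpha-p}$, which is uniform once $\alpha-p$ is bounded below. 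So uniformity follows on the whole range of $\alpha$ for fixed $p$ with $\alpha_0>p$. Combining the three steps yields $c(2-\alpha)$, and the main obstacle is exactly this uniform control of the $p$-moments and of the $\vartheta=p$ integrals as $\alpha$ varies.
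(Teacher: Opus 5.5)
Your proposal is correct and is essentially the paper's proof. You bound $\int_{\mR^d}(1+|y|^p)\,|\hp_b^{(\alpha)}-\hp_b^{(2)}|(0,t;x,y)\,\dif y\lesssim (2-\alpha)(1+|x|^p)$ using Theorem \ref{th-1.1}(ii), \eqref{eq:LK01} and the linear growth of $x\mapsto\theta^{(1)}_{0,t}(x)$ (the paper cites Lemma \ref{lem:flow} where you use Gronwall), and then integrate against $\mu^{(\alpha)}$ via Lemma \ref{lem-6.0}.

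The uniformity issue you flag is real, and the paper's one-line proof glosses over it. The constant in \eqref{eq:LK01} carries a factor $(\alpha-p)^{-1}$, and Lemma \ref{lem-6.0} only covers $\alpha\ge\frac{p+2}{2}$. Your restriction to $p<\alpha_0$, together with rerunning Lemma \ref{lem-6.0} via \eqref{eq:JH02}, is the right repair. It also cannot be avoided: for $b(x)=-x$ one has $\int|x|^p\,\mu^{(\alpha)}(\dif x)\asymp(\alpha-p)^{-1}$. Testing with $h=V_p$ and using invariance, the left-hand side then blows up as $\alpha\downarrow p$, so no $\alpha$-uniform constant exists when $p\ge\alpha_0$.
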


\begin{proof}
By Theorem~\ref{th-1.1}, Lemma \ref{lem:flow}, and \eqref{eq:LK01}, we get that
\begin{align*}
\|{P_{t}^{(\alpha)}}^* \delta_x& -{P_{t}^{(2)}}^* \delta_x \|_{\var,V_p} \leq
\int_{\mR^d} (1+|y|^p)|\hp_{b}^{(\alpha)}-\hp_{b}^{(2)}|(0,t;x,y) \dif y
 \\
& \lesssim (2-\alpha) t^{\tfrac{
7\alpha-12}{2\alpha}-1} \sum_{\gamma_1,\gamma_2\in\{\alpha,2\}} \int_{\mR^d} (1+|y|^p)
\varrho_{\gamma_1,\gamma_2}
(t, \theta^{(1)}_{0,t}(x)-y) \dif y\\
& \lesssim
(2-\alpha) (1+|x|^p),
\end{align*}
which implies the desired result by Lemma \ref{lem-6.0}.
\end{proof}

Now we are in the position to give the

\begin{proof}[Proof of Theorem \ref{th-6.1}]
Let $\dis$ be $ \|\cdot\|_{\var,V_p}$ with $p \in (0,\alpha)$.
Due to the invariance of measures $\mu^{(\alpha)}$ and $\mu^{(2)}$, we
have that for $t>0$,
\begin{align*}
\dis(\mu^{(\alpha)},& \mu^{(2)}) =\dis({P_{t}^{(\alpha)}}^*
\mu^{(\alpha)},{P_{t}^{(2)}}^* \mu^{(2)}) \\
& \leq \dis({P_{t}^{(\alpha)}}^*
\mu^{(\alpha)},{P_{t}^{(2)}}^* \mu^{(\alpha)})
+\dis({P_{t}^{(2)}}^*\mu^{(\alpha)},{P_{t}^{(2)}}^* \mu^{(2)}),
\end{align*}
which together with Proposition \ref{cor-6.00} derives that there exists $t_0>0$ such that
$$
\dis(\mu^{(\alpha)},\mu^{(2)}) \leq 2 \dis({P_{t_0}^{(\alpha)}}^*
\mu^{(\alpha)},{P_{t_0}^{(2)}}^* \mu^{(\alpha)}) \lesssim (2-\alpha),
$$
where we used Corollary \ref{cor-6.1} in the last inequality. The proof is finished.
\end{proof}

%%%%%%%%%%%%%%%%%%%%%%%%

%%%%%%%
%APPENDIX
%%%%%%%

%\begin{appendix}

%\section{}

%\end{appendix}

\subsection*{Acknowledgments}
We thank Zimo Hao (Beijing Institute of Technology) and Guohuan Zhao (Academy of Mathematics and Systems Science of CAS) for their insightful discussions and valuable corrections.

\subsection*{Fundings}
Mingyan Wu was supported by the National Natural Science Foundation of China (NSFC) Grant No. 12201227. Xianming Liu was supported in part by the NSFC Grant No. 11971186.

%\addcontentsline{toc}{section}{References}

\end{document}